\providecommand{\U}[1]{\protect\rule{.1in}{.1in}}
\newtheorem{theorem}{Theorem}[section]
\theoremstyle{plain}
\newtheorem{corollary}{Corollary}[section]
\numberwithin{equation}{section}
\begin{document}
\title[Hardy identities and inequalities]{Hardy's identities and inequalities on Cartan-Hadamard manifolds}
\author{Joshua Flynn}
\address{Joshua Flynn: Department of Mathematics\\
University of Connecticut\\
Storrs, CT 06269, USA}
\email{joshua.flynn@uconn.edu}
\author{Nguyen Lam}
\address{Nguyen Lam: School of Science \& Environment\\
Grenfell Campus, Memorial University of Newfoundland\\
Corner Brook, NL A2H5G4, Canada }
\email{nlam@grenfell.mun.ca}
\author{Guozhen Lu }
\address{Guozhen Lu: Department of Mathematics\\
University of Connecticut\\
Storrs, CT 06269, USA}
\email{guozhen.lu@uconn.edu}
\author{Saikat Mazumdar}
\address{Saikat Mazumdar: Department of Mathematics\\
Indian Institute of Technology Bombay\\
Mumbai 400076, India}
\email{saikat@math.iitb.ac.in, saikat.mazumdar@iitb.ac.in}
\subjclass[2010]{ 26D10; 46E35; 31C12; 53C21}
\keywords{Hardy inequality; Hardy-Poincar\'{e}--Sobolev; Cartan-Hadamard manifold;
Hyperbolic space}

\begin{abstract}
We study the Hardy identities and inequalities on Cartan-Hadamard manifolds
using the notion of a Bessel pair. These Hardy identities offer significantly
more information on the existence/nonexistence of the extremal functions of
the Hardy inequalities. These Hardy inequalities are in the spirit of
Brezis-V\'{a}zquez in the Euclidean spaces. As direct consequences, we
establish several Hardy type inequalities that provide substantial
improvements as well as simple understandings to many known Hardy inequalities
and Hardy-Poincar\'{e}-Sobolev type inequalities on hyperbolic spaces in the literature.

\end{abstract}
\maketitle

\section{Introduction}

The main purpose of this article is to study the improvements of the $L^{2}%
$-Hardy type inequalities on Cartan-Hadamard manifold, i.e. a Riemannian
manifold that is complete and simply connected and has everywhere nonpositive
sectional curvature. We also sharpen Hardy inequalities on hyperbolic spaces
in the literature.

We recall that on the Euclidean space $\mathbb{R}^{N}$, $N\geq3$, the
following celebrated Hardy inequality plays important roles in many areas such
as analysis, probability and partial differential equations:
\begin{equation}%
{\displaystyle\int\limits_{\mathbb{R}^{N}}}
\left\vert \nabla f\right\vert ^{2}dx\geq\left(  \frac{N-2}{2}\right)  ^{2}%
{\displaystyle\int\limits_{\mathbb{R}^{N}}}
\frac{\left\vert f\right\vert ^{2}}{\left\vert x\right\vert ^{2}}dx\text{,
}f\in C_{0}^{\infty}\left(
\mathbb{R}
^{N}\right)  \text{.} \label{1.1}%
\end{equation}
The constant $\left(  \frac{N-2}{2}\right)  ^{2}$ in the above Hardy
inequality is optimal and is never achieved by nontrivial functions.
Therefore, one may want to improve (\ref{1.1}) by adding extra nonnegative
terms to its right hand side. On the whole space $\mathbb{R}^{N}$, the
operator $-\Delta-\left(  \frac{N-2}{2}\right)  ^{2}\frac{1}{\left\vert
x\right\vert ^{2}}$ is known to be critical and there is no strictly positive
$V\in V^{1}\left(  \left(  0,\infty\right)  \right)  $ such that the
inequality%
\[%
{\displaystyle\int\limits_{\mathbb{R}^{N}}}
\left\vert \nabla f\right\vert ^{2}dx-\left(  \frac{N-2}{2}\right)  ^{2}%
{\displaystyle\int\limits_{\mathbb{R}^{N}}}
\frac{\left\vert f\right\vert ^{2}}{\left\vert x\right\vert ^{2}}dx\geq%
{\displaystyle\int\limits_{\mathbb{R}^{N}}}
V\left(  \left\vert x\right\vert \right)  \left\vert f\right\vert ^{2}dx
\]
holds for all $f\in C_{0}^{\infty}\left(
\mathbb{R}
^{N}\right)  $ (see \cite[Corollary 2.3.4]{GM1}, for e.g.). The situation is
very different on bounded domains. In particular, it has been showed that
extra nonnegative terms can be added to the Hardy inequality on bounded
domains. For instance, let $\Omega$ be a bounded domain in $%
\mathbb{R}
^{N}$, $N\geq3$, with $0\in\Omega$, then in order to investigate the stability
of singular solutions of nonlinear elliptic equations, Brezis and V\'{a}zquez
verified in \cite{BV} that for all $f\in W_{0}^{1,2}\left(  \Omega\right)  :$%
\begin{equation}%
{\displaystyle\int\limits_{\Omega}}
\left\vert \nabla f\right\vert ^{2}dx-\left(  \frac{N-2}{2}\right)  ^{2}%
{\displaystyle\int\limits_{\Omega}}
\frac{\left\vert f\right\vert ^{2}}{\left\vert x\right\vert ^{2}}dx\geq
z_{0}^{2}\omega_{N}^{\frac{2}{N}}\left\vert \Omega\right\vert ^{-\frac{2}{N}}%
{\displaystyle\int\limits_{\Omega}}
\left\vert f\right\vert ^{2}dx \label{1.3}%
\end{equation}
where $\omega_{N}$ is the volume of the unit ball and $z_{0}=2.4048...$ is the
first zero of the Bessel function $J_{0}\left(  z\right)  $. We also mention
that in \cite{VZ00}, V\'{a}zquez and Zuazua established the following improved
Hardy-Poincar\'{e} inequality: for any $1\leq q<2$, there exists a constant
$C\left(  q,\Omega\right)  >0$ such that for all $f\in W_{0}^{1,2}\left(
\Omega\right)  :$
\[%
{\displaystyle\int\limits_{\Omega}}
\left\vert \nabla f\right\vert ^{2}dx-\left(  \frac{N-2}{2}\right)  ^{2}%
{\displaystyle\int\limits_{\Omega}}
\frac{\left\vert f\right\vert ^{2}}{\left\vert x\right\vert ^{2}}dx\geq
C\left(  q,\Omega\right)  \left\Vert f\right\Vert _{W^{1,q}\left(
\Omega\right)  }^{2}.
\]

It is interesting to note that the constant $z_{0}^{2}\omega_{N}^{\frac{2}{N}%
}\left\vert \Omega\right\vert ^{-\frac{2}{N}}$ in (\ref{1.3}) is optimal when
$\Omega$ is a ball and again is not attained in $W_{0}^{1,2}\left(
\Omega\right)  $. Therefore, Brezis and V\'{a}zquez also conjectured that
$z_{0}^{2}\omega_{N}^{\frac{2}{N}}\left\vert \Omega\right\vert ^{-\frac{2}{N}}%
{\displaystyle\int\limits_{\Omega}}
\left\vert f\right\vert ^{2}dx$ is just the first term of an infinite series
of extra terms that can be added to the right hand side of (\ref{1.3}). This
problem has attracted great attention and was investigated by many authors.
See \cite{ACR, Beckner12, BM97, BMS00, D, DFP14, FS, GGM, GL2017}, among
others. We also refer the interested reader to \cite{BEL, GM1, KMP2007, KP,
Maz11, OK} which are excellent monographs on the topic. In particular, we note
that in an attempt to improve, extend and unify several results in this
direction, Ghoussoub and Moradifam \cite{GM} introduced the notion of a Bessel
pair and studied its connections to Hardy inequalities. One of their results
can be read as follows:

\vskip0.5cm

\textbf{Theorem A.} \textit{Let }$0<R\leq\infty$\textit{, }$B_{R}%
=B(0,R)$\textit{ be a ball centered at the origin with radius }$R$\textit{,
}$V$\textit{ and }$W$\textit{ be positive }$C^{1}$-\textit{functions on
}$\left(  0,R\right)  $\textit{ such that }$%
{\displaystyle\int\limits_{0}^{R}}
\frac{1}{r^{N-1}V\left(  r\right)  }dr=\infty$\textit{ and }$%
{\displaystyle\int\limits_{0}^{R}}
r^{N-1}V\left(  r\right)  dr<\infty$\textit{. Then,}

\textit{(1) If }$\left(  r^{N-1}V,r^{N-1}W\right)  $\textit{ is a Bessel pair
on }$\left(  0,R\right)  $\textit{, then for all}$\ f\in C_{0}^{\infty}\left(
B_{R}\right)  :$\textit{ }%
\begin{equation}%
{\displaystyle\int\limits_{B_{R}}}
V\left(  \left\vert x\right\vert \right)  \left\vert \nabla f\right\vert
^{2}dx\geq%
{\displaystyle\int\limits_{B_{R}}}
W\left(  \left\vert x\right\vert \right)  \left\vert f\right\vert
^{2}dx\text{\textit{.}} \label{A1}%
\end{equation}

\textit{(2) If (\ref{A1}) holds for all}$\ f\in C_{0}^{\infty}\left(
B_{R}\right)  $\textit{, then }$\left(  r^{N-1}V,r^{N-1}cW\right)  $\textit{
is a Bessel pair on }$\left(  0,R\right)  $\textit{ for some }$c>0.$

\vskip0.5cm

Here we say that a couple of $C^{1}$-functions $\left(  V,W\right)  $ is a
Bessel pair on $\left(  0,R\right)  $ for some $0<R\leq\infty$ if the ordinary
differential equation
\begin{equation}
\left(  Vy^{\prime}\right)  ^{\prime}+Wy=0 \label{1}%
\end{equation}
has a positive solution $\varphi$ on the interval $\left(  0,R\right)  $.

We also mention the paper \cite{Muck72} where Muckenhought pairs have been
used to study the necessary and sufficient conditions for the validity of the
Hardy inequality on one-dimensional space.

\vskip0.5cm

Hardy type inequalities have also been generalized to the cases with general
distance functions \cite{BFT, BM97, LLZ19, LLZ20}, in multipolar setting
\cite{BDE08, Caz16, CZ13}, etc. In particular, the following Hardy inequality
has been first established on Riemannian manifold $\left(  \mathbb{M}%
,g\right)  $ by Carron in the paper \cite{Car}:
\begin{equation}%
{\displaystyle\int\limits_{\mathbb{M}}}
\rho^{\alpha}\left(  x\right)  \left\vert \nabla_{g}f\right\vert _{g}%
^{2}dV_{g}\geq\left(  \frac{C+\alpha-1}{2}\right)  ^{2}%
{\displaystyle\int\limits_{\mathbb{M}}}
\rho^{\alpha}\left(  x\right)  \frac{\left\vert f\right\vert ^{2}}{\rho
^{2}\left(  x\right)  }dV_{g} \label{1.2}%
\end{equation}
where $\alpha\in%
\mathbb{R}
$, $C+\alpha-1>0$, $f\in C_{0}^{\infty}\left(  \mathbb{M}\setminus\rho
^{-1}\left\{  0\right\}  \right)  $ and the weighted function $\rho$ satisfies
the eikonal equation $\left\vert \nabla_{g}\rho\right\vert _{g}=1$ and
$\Delta_{g}\rho\geq\frac{C}{\rho}$ for some $C>0$. Here $dV_{g}$, $\nabla_{g}%
$, $\Delta_{g}$ and $\left\vert \cdot\right\vert _{g}$ denote the volume
element, gradient, Laplace--Beltrami operator and the length of a vector field
with respect to the Riemannian metric $g$ on $\mathbb{M}$, respectively.
Further developments have been established in \cite{BDGG17, BGG, DD14, KS19},
for instance.

When $\mathbb{M}$ is a $N$-dimensional Cartan--Hadamard manifold and
$\rho=d(x,O)$ is the geodesic distance, then $\rho$ satisfies all the
aforementioned conditions. Moreover, it was showed in \cite{Car} that
\begin{equation}%
{\displaystyle\int\limits_{\mathbb{M}}}
\left\vert \nabla_{g}f\right\vert _{g}^{2}dV_{g}\geq\left(  \frac{N-2}%
{2}\right)  ^{2}%
{\displaystyle\int\limits_{\mathbb{M}}}
\frac{\left\vert f\right\vert ^{2}}{\rho^{2}\left(  x\right)  }dV_{g}%
.\label{1.4}%
\end{equation}
Moreover, the constant $\left(  \frac{N-2}{2}\right)  ^{2}$ was verified to be
optimal in \cite{YSK}. In particular, when $\mathbb{M}$ is the hyperbolic
space $\mathbb{H}^{N}$, we have
\begin{equation}%
{\displaystyle\int\limits_{\mathbb{H}^{N}}}
\left\vert \nabla_{\mathbb{H}}f\right\vert ^{2}dV_{\mathbb{H}}\geq\left(
\frac{N-2}{2}\right)  ^{2}%
{\displaystyle\int\limits_{\mathbb{H}^{N}}}
\frac{\left\vert f\right\vert ^{2}}{\rho^{2}\left(  x\right)  }dV_{\mathbb{H}%
}\label{Ha1}%
\end{equation}
where $\rho\left(  x\right)  $ is the geodesic distance on $\mathbb{H}^{N}$.
On the other hand, it is well-known that on $\mathbb{H}^{N}$, the $L^{2}%
$-spectrum is $\left[  \left(  \frac{N-1}{2}\right)  ^{2},\infty\right)  $.
More precisely, we have the Poincar\'{e}--Sobolev inequality%
\begin{equation}%
{\displaystyle\int\limits_{\mathbb{H}^{N}}}
\left\vert \nabla_{\mathbb{H}}f\right\vert ^{2}dV_{\mathbb{H}}\geq\left(
\frac{N-1}{2}\right)  ^{2}%
{\displaystyle\int\limits_{\mathbb{H}^{N}}}
\left\vert f\right\vert ^{2}dV_{\mathbb{H}}\label{Hy1}%
\end{equation}
where $\left(  \frac{N-1}{2}\right)  ^{2}$ is sharp and is never attained by
nontrivial functions in $W^{1,2}\left(  \mathbb{H}^{N}\right)  $. In
\cite{AK13}, the authors investigated the finiteness and infiniteness of the
discrete spectrum of the Schr\"{o}dinger operator $-\Delta_{\mathbb{H}}+V$ and
set up the following sharp improvements of the Poincar\'{e}--Sobolev
inequality (\ref{Hy1}):%
\begin{align}
&
{\displaystyle\int\limits_{\mathbb{H}^{N}}}
\left\vert \nabla_{\mathbb{H}}f\right\vert ^{2}dV_{\mathbb{H}}-\left(
\frac{N-1}{2}\right)  ^{2}%
{\displaystyle\int\limits_{\mathbb{H}^{N}}}
\left\vert f\right\vert ^{2}dV_{\mathbb{H}}\nonumber\\
&  \geq\frac{1}{4}%
{\displaystyle\int\limits_{\mathbb{H}^{N}}}
\frac{\left\vert f\right\vert ^{2}}{\rho^{2}\left(  x\right)  }dV_{\mathbb{H}%
}+\frac{\left(  N-1\right)  \left(  N-3\right)  }{4}%
{\displaystyle\int\limits_{\mathbb{H}^{N}}}
\frac{\left\vert f\right\vert ^{2}}{\sinh^{2}\rho\left(  x\right)
}dV_{\mathbb{H}}.\label{Hy2}%
\end{align}
Moreover, the operator $-\Delta_{\mathbb{H}}-\left(  \frac{N-1}{2}\right)
^{2}-\frac{1}{4}\frac{1}{\rho^{2}\left(  x\right)  }-\frac{\left(  N-1\right)
\left(  N-3\right)  }{4}\frac{1}{\sinh^{2}\rho\left(  x\right)  }$ is critical
in $\mathbb{H}^{N}\setminus\left\{  0\right\}  $ in the sense that for any
$W>\frac{1}{4r^{2}}+\frac{\left(  N-1\right)  \left(  N-3\right)  }{4}\frac
{1}{\sinh^{2}r}$, the inequality%
\begin{align*}
&
{\displaystyle\int\limits_{\mathbb{H}^{N}}}
\left\vert \nabla_{\mathbb{H}}f\right\vert ^{2}dV_{\mathbb{H}}-\left(
\frac{N-1}{2}\right)  ^{2}%
{\displaystyle\int\limits_{\mathbb{H}^{N}}}
\left\vert f\right\vert ^{2}dV_{\mathbb{H}}\\
&  \geq%
{\displaystyle\int\limits_{\mathbb{H}^{N}}}
W\left\vert f\right\vert ^{2}dV_{\mathbb{H}}\text{ }\forall f\in C_{0}%
^{\infty}\left(  \mathbb{H}^{N}\setminus\left\{  0\right\}  \right)
\end{align*}
is not valid. This Hardy-Poincar\'{e}-Sobolev inequality has also been studied
on larger classes of manifolds in \cite{BGG}. Recently, there has been
progress of establishing higher order Hardy-Sobolev-Maz'ya inequalities on
hyperbolic spaces using Fourier analysis on hyperbolic spaces (see Lu and Yang
\cite{LuYang1, LuYang2}). It is also worth mentioning that the problems of
improving Hardy type inequalities as well as other functional and geometric
inequalities using the effect of curvature have been studied intensively
recently. We refer the interested reader to \cite{BGGP, CGMSO18, DH02, KO1,
KO2, NN19, Nguyen19, YSK}, to name just a few.

\vskip0.5cm

Motivated by the aforementioned results, the main purpose of this article is
to study the general Hardy type inequalities on Cartan-Hadamard manifolds.
Moreover, we will set up some general Hardy identities that can be used to
derive several substantial improvements of the Hardy inequality on
Cartan--Hadamard manifolds. Our equalities not only provide straightforward
understandings of several Hardy type inequalities, but also explain the
existence and nonexistence of nontrivial optimizers.

Let $(\mathbb{M},g)$ be a complete Riemannian manifold of dimension $N$. In a
local coordinate system $\left\{  x^{i}\right\}  _{i=1}^{N}$, we can write
\[
g=%
{\displaystyle\sum}
g_{ij}dx^{i}dx^{j}.
\]
The Laplace-Beltrami operator $\Delta_{g}$\ with respect to the metric $g$ may
then be written as
\[
\Delta_{g}=%
{\displaystyle\sum}
\frac{1}{\sqrt{\det\left(  g_{ij}\right)  }}\frac{\partial}{\partial x^{i}%
}\left(  \sqrt{\det\left(  g_{ij}\right)  }g^{ij}\frac{\partial}{\partial
x^{j}}\right)
\]
where $\left(  g^{ij}\right)  =\left(  g_{ij}\right)  ^{-1}$. Denote by
$\nabla_{g}$ the corresponding gradient. Then
\[
\left\langle \nabla_{g}f,\nabla_{g}h\right\rangle _{g}=%
{\displaystyle\sum}
g^{ij}\frac{\partial f}{\partial x^{i}}\frac{\partial g}{\partial x^{j}}.
\]
We also denote%
\[
\left\vert \nabla_{g}f\right\vert _{g}=\sqrt{\left\langle \nabla_{g}%
f,\nabla_{g}f\right\rangle _{g}}.
\]
Fix a point $O\in\mathbb{M}$ and denote by $\rho(x)=d(x,O)$ for all
$x\in\mathbb{M}$, where $d$ denotes the geodesic distance on $\mathbb{M}$.
Then $\rho\left(  x\right)  $ is Lipschitz continuous in $\mathbb{M}$.

For each point $O\in\mathbb{M}$, consider the exponential map $\exp_{O}%
:T_{O}\mathbb{M}\rightarrow\mathbb{M}$. For $X \in T_{O}\mathbb{M}$, let
$\gamma(t)$ be the unique geodesic such that $\gamma(0)=O$ and $\gamma
^{\prime}(0)=X$. Then $\exp_{O}(tX)=\gamma(t)$ for $t>0$. For small $t$,
$\gamma$ is the unique minimal geodesic joining the points $O$ and $\exp
_{O}(tX)$.

One can write
\begin{align*}
\mathbb{M}=\exp_{O}(U_{O})\cup Cut(O),
\end{align*}
where $Cut(O)$ denotes the cut locus of the point $O$ and $U_{O}$ is an open
neighborhood of $O$ in $T_{O}\mathbb{M}$. Furthermore, $\exp_{O}:
U_{O}\rightarrow\exp_{O}(U_{O})$ is a diffeomorphism and $Cut(O)= \exp
_{O}\partial U_{O}$. Also, the cut locus $Cut(O)$ has measure zero .

The distance function $\rho(x)$ is smooth on $\mathbb{M}\setminus
\big( Cut(O)\cup\{O\} \big)$ and it satisfies $\left\vert \nabla_{g}%
\rho\left(  x\right)  \right\vert _{g}=1$ on $\mathbb{M}\setminus
\big( Cut(O)\cup\{O\} \big)$.

For a Cartan-Hadamard manifold $(M,g)$, the exponential map $\exp_{O}%
:T_{O}\mathbb{M}\rightarrow\mathbb{M}$ is a diffeomorphism and
$Cut(O)=\emptyset$, and then $\rho\left(  x\right)  $ is smooth in
$\mathbb{M}\setminus\left\{  O\right\}  $ and $\left\vert \nabla_{g}%
\rho\left(  x\right)  \right\vert _{g}=1$.

For any $R>0$, denote by $B_{R}\left(  O\right)  =\{x\in\mathbb{M}%
:\rho(x)<\delta\}$ the geodesic ball in $\mathbb{M}$ with center at $O$ and
radius $R$. Now, we choose an orthonormal basis $\left\{  u,e_{2}%
,...,e_{N}\right\}  $ in $T_{O}\mathbb{M}$ and let $c\left(  t\right)
=\exp_{O}\left(  tu\right)  $ be a geodesic curve. Consider the Jacobi fields
$\left\{  Y_{2}\left(  t\right)  ,...,Y_{N}\left(  t\right)  \right\}  $
satisfying $Y_{i}\left(  0\right)  =0$ and $Y_{i}^{\prime}\left(  0\right)
=e_{i}$, so that the volume density function written in geodesic polar
coordinates can be given by%
\[
J\left(  u,t\right)  =t^{-N+1}\sqrt{\det\left(  \left\langle Y_{i}\left(
t\right)  ,Y_{j}\left(  t\right)  \right\rangle \right)  },\text{ }t>0\text{.}%
\]
We note that $J\left(  u,t\right)  \in C^{\infty}\left(  T_{O}\mathbb{M}%
\setminus\left\{  O\right\}  \right)  $ and does not depend on $\left\{
e_{2},...,e_{N}\right\}  $. By the definition of the density function
$J\left(  u,t\right)  $, we have the polar coordinates on $\mathbb{M}$:%
\[%
{\displaystyle\int\limits_{\mathbb{M}}}
f\left(  x\right)  dV_{g}=%
{\displaystyle\int\limits_{\mathbb{S}^{N-1}}}
{\displaystyle\int\limits_{0}^{\infty}}
f\left(  \exp_{O}\left(  tu\right)  \right)  J\left(  u,t\right)
t^{N-1}dtdu.
\]
Here $du$ denotes the canonical measure of the unit sphere of $T_{O}%
\mathbb{M}$.

For any function $f$ on $\mathbb{M}$, we also define the radial derivation
$\partial_{\rho}=\frac{\partial}{\partial\rho}$ along the geodesic curve
starting from $O$ by
\[
\partial_{\rho}f\left(  x\right)  =\frac{d\left(  f\circ\exp_{O}\right)  }%
{dr}\left(  \exp_{O}^{-1}\left(  x\right)  \right)  .
\]
Here we denote $\frac{d}{dr}$ the radial derivation on $T_{O}\mathbb{M}$:%
\[
\frac{d}{dr}F\left(  u\right)  =\left\langle \frac{u}{\left\vert u\right\vert
},\nabla F(u)\right\rangle .
\]
We note that by Gauss's lemma, we have that $\left\vert \partial_{\rho
}f\right\vert \leq\left\vert \nabla_{g}f\right\vert _{g}$ for $f\in
C^{1}\left(  \mathbb{M}\setminus Cut(O)\right)  $.

The first main result of this article is the following Hardy type identities
on the general complete Riemannian manifold $(\mathbb{M},g):$

\begin{theorem}
\label{T1} Let $(\mathbb{M},g)$ be a complete Riemannian manifold of dimension
$N$. \textit{Let } $O\in\mathbb{M}$ and take $0<R\leq d(O,Cut(O))$.
\textit{Let} $V$\textit{and }$W$\textit{ be positive }$C^{1}-$%
\textit{functions on }$\left(  0,R\right)  $ such that $\left(  r^{N-1}%
V,r^{N-1}W\right)  $ is a Bessel pair on $\left(  0,R\right)  $. Then we have
the following identities for all $f\in C_{0}^{\infty}\left(  \mathbb{M}%
\setminus\big(Cut(O)\cup\rho^{-1}\left\{  0\right\}  \big)\right)  $:%
\begin{align*}
&
{\displaystyle\int\limits_{B_{R}\left(  O\right)  }}
V\left(  \rho\left(  x\right)  \right)  \left\vert \nabla_{g}f\right\vert
_{g}^{2}dV_{g}-%
{\displaystyle\int\limits_{B_{R}\left(  O\right)  }}
W\left(  \rho\left(  x\right)  \right)  \left\vert f\right\vert ^{2}dV_{g}\\
&  =%
{\displaystyle\int\limits_{B_{R}\left(  O\right)  }}
V\left(  \rho\left(  x\right)  \right)  \left\vert \varphi^{2}\left(
\rho\left(  x\right)  \right)  \right\vert \left\vert \nabla_{g}\left(
\frac{f}{\varphi\left(  \rho\left(  x\right)  \right)  }\right)  \right\vert
_{g}^{2}dV_{g}\\
&  -%
{\displaystyle\int\limits_{B_{R}\left(  O\right)  }}
V\left(  \rho\left(  x\right)  \right)  \left\vert f\right\vert ^{2}%
\frac{\varphi^{\prime}\left(  \rho\left(  x\right)  \right)  }{\varphi\left(
\rho\left(  x\right)  \right)  }\frac{J^{\prime}\left(  u,\rho\left(
x\right)  \right)  }{J\left(  u,\rho\left(  x\right)  \right)  }dV_{g}%
\end{align*}
and%
\begin{align*}
&
{\displaystyle\int\limits_{B_{R}\left(  O\right)  }}
V\left(  \rho\left(  x\right)  \right)  \left\vert \partial_{\rho}f\right\vert
^{2}dV_{g}-%
{\displaystyle\int\limits_{B_{R}\left(  O\right)  }}
W\left(  \rho\left(  x\right)  \right)  \left\vert f\right\vert ^{2}dV_{g}\\
&  =%
{\displaystyle\int\limits_{B_{R}\left(  O\right)  }}
V\left(  \rho\left(  x\right)  \right)  \varphi^{2}\left(  \rho\left(
x\right)  \right)  \left\vert \partial_{\rho}\left(  \frac{f}{\varphi\left(
\rho\left(  x\right)  \right)  }\right)  \right\vert ^{2}dV_{g}\\
&  -%
{\displaystyle\int\limits_{B_{R}\left(  O\right)  }}
V\left(  \rho\left(  x\right)  \right)  \left\vert f\right\vert ^{2}%
\frac{\varphi^{\prime}\left(  \rho\left(  x\right)  \right)  }{\varphi\left(
\rho\left(  x\right)  \right)  }\frac{J^{\prime}\left(  u,\rho\left(
x\right)  \right)  }{J\left(  u,\rho\left(  x\right)  \right)  }dV_{g}.
\end{align*}
Here $J^{\prime}\left(  u,t\right)  =\frac{\partial J\left(  u,t\right)
}{\partial t}$, $x=\exp_{O}\left(  \rho u\right)  $ and $\varphi$ is the
positive solution of%
\[
\left(  r^{N-1}V\left(  r\right)  \varphi^{\prime}(r)\right)  ^{\prime
}+r^{N-1}W\left(  r\right)  \varphi(r)=0.
\]

\end{theorem}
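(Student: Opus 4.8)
The plan is to perform the substitution $f=\varphi(\rho)\,g$ with $g=f/\varphi(\rho)$, expand the energy density, and recognize the resulting cross term as a total radial derivative that can be integrated by parts. Since $f\in C_0^\infty\big(\mathbb{M}\setminus(Cut(O)\cup\rho^{-1}\{0\})\big)$ and $0<R\le d(O,Cut(O))$, on the support of $f$ inside $B_R(O)$ the distance $\rho$ is smooth, $\varphi(\rho)>0$, and the geodesic polar coordinates $x=\exp_O(\rho u)$ form a genuine diffeomorphism, so $g$ is a legitimate smooth compactly supported function and every manipulation below takes place away from $O$, the cut locus and $\partial B_R(O)$.

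First I would expand the integrand. Writing $\nabla_g\varphi(\rho)=\varphi'(\rho)\nabla_g\rho$ and using $|\nabla_g\rho|_g=1$ together with $\langle\nabla_g\rho,\nabla_g g\rangle=\partial_\rho g$ (both consequences of Gauss's lemma), I obtain
\[
V(\rho)\,|\nabla_g f|_g^2 = V(\rho)\varphi'(\rho)^2 g^2 + V(\rho)\varphi(\rho)\varphi'(\rho)\,\partial_\rho(g^2) + V(\rho)\varphi(\rho)^2\,|\nabla_g g|_g^2,
\]
where the two cross terms have been combined into $V\varphi\varphi'\,\partial_\rho(g^2)$. The last summand is exactly $V(\rho)\varphi(\rho)^2|\nabla_g(f/\varphi)|_g^2$, the first term on the right-hand side of the asserted identity, so the whole problem reduces to integrating the first two summands.

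Next comes the integration by parts. I would pass to polar coordinates via $\int_{B_R(O)}F\,dV_g=\int_{\mathbb{S}^{N-1}}\int_0^R F(\exp_O(tu))\,J(u,t)\,t^{N-1}\,dt\,du$ and integrate the cross term in the radial variable $t$:
\[
\int_0^R t^{N-1}V\varphi\varphi'\,J\,\partial_t(g^2)\,dt = -\int_0^R \partial_t\big(t^{N-1}V\varphi'\cdot\varphi\,J\big)\,g^2\,dt,
\]
the boundary contributions vanishing by compact support. The crucial step is to expand this derivative and invoke the Bessel-pair equation $\big(t^{N-1}V\varphi'\big)'=-t^{N-1}W\varphi$, which converts $\partial_t(t^{N-1}V\varphi')\cdot\varphi$ into $-t^{N-1}W\varphi^2$; after differentiating the remaining factors one finds
\[
-\partial_t\big(t^{N-1}V\varphi'\varphi J\big) = t^{N-1}J\Big(W\varphi^2 - V\varphi'^2 - V\varphi'\varphi\,\tfrac{J'}{J}\Big).
\]
Returning to $dV_g$ and using $\varphi^2 g^2=f^2$, the term $-V\varphi'^2 g^2$ cancels exactly the first summand $V\varphi'^2 g^2$ from the expansion, leaving $\int W f^2 - \int V\tfrac{\varphi'}{\varphi}\tfrac{J'}{J}f^2$, which rearranges precisely into the first claimed identity.

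Finally, the second identity follows by the same computation verbatim: replacing $\nabla_g$ with the radial derivative $\partial_\rho$ changes only the ``good'' term $V\varphi^2|\nabla_g(f/\varphi)|_g^2$ into $V\varphi^2|\partial_\rho(f/\varphi)|^2$, since the cross term and the $W$- and $J'$-terms depend solely on the radial derivative and are identical in both cases. The main obstacle I anticipate is the bookkeeping of the integration by parts in polar coordinates — in particular justifying the vanishing of the boundary terms and differentiating the density $J(u,t)t^{N-1}$ and the factor $\varphi$ correctly — rather than any conceptual difficulty. The constraint $R\le d(O,Cut(O))$ is exactly what guarantees the validity of the polar-coordinate change of variables on the support of $f$, and the Bessel-pair ODE is the single place where the hypotheses enter essentially.
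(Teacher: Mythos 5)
Your proposal is correct and follows essentially the same route as the paper: substitute $f=\varphi(\rho)v$, expand the quadratic form, integrate the cross term $V\varphi\varphi'\,\partial_\rho(v^2)$ by parts against the density $t^{N-1}J(u,t)$, and use the Bessel-pair ODE $\big(t^{N-1}V\varphi'\big)'=-t^{N-1}W\varphi$ to produce the $W$-term and the $J'/J$-remainder. The only cosmetic difference is that the paper phrases the radial integration by parts for the gradient identity via the divergence theorem and the formula $\Delta_g\rho=\tfrac{N-1}{\rho}+\tfrac{J'(u,\rho)}{J(u,\rho)}$, which is the same computation you carry out directly in polar coordinates.
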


\vskip0.5cm

It is in fact possible to consider $\varphi$ which have a zero at some $r=R$,
but are positive elsewhere and which satisfy the Bessel pair ODE on $\left(
0,R\right)  \cup\left(  R,\infty\right)  $. As the following theorem
demonstrates, considering such $\varphi$ allows one to establish global Hardy
identities provided $f$ is replaced by $f-f\left(  \exp\right)  $ and provided
$f$ satisfies (\ref{C}). We note that on finite interval $\left(  0,R\right)
$, this function $\varphi$ satisfies the condition in Theorem \ref{T1}.
However, on the infinite interval $\left(  0,\infty\right)  $, this $\varphi$
is allowed to be degenerate or singular at $R$.

\begin{theorem}
\label{T2} Let $(\mathbb{M},g)$ be a complete Riemannian manifold of dimension
$N$. \textit{Let } $O\in\mathbb{M}$ and take $0<R\leq d(O,Cut(O))$ Assume that
$V$ and $W$ are positive $C^{1}$-functions on $\left(  0,R\right)  \cup\left(
R,\infty\right)  $ such that the ordinary differential equation
\[
\left(  V\left(  r\right)  r^{N-1}\varphi^{\prime}\left(  r\right)  \right)
^{\prime}+W\left(  r\right)  r^{N-1}\varphi\left(  r\right)  =0
\]
has a positive solution $\varphi$ on $\left(  0,R\right)  \cup\left(
R,\infty\right)  $.

\noindent Then for all $f\in C_{0}^{\infty}\left(  \mathbb{M}\setminus
\big( Cut(O)\cup\rho^{-1}\left\{  0\right\}  \big) \right)  $ satisfying that
for all $u\in\mathbb{S}^{N-1}:$%
\begin{equation}
\lim_{r\rightarrow R}V\left(  r\right)  \frac{\varphi^{\prime}\left(
r\right)  }{\varphi\left(  r\right)  }\left\vert f\left(  \exp_{O}\left(
ru\right)  \right)  -f\left(  \exp_{O}\left(  Ru\right)  \right)  \right\vert
^{2}=0, \label{C}%
\end{equation}
we have
\begin{align*}
&
{\displaystyle\int\limits_{\mathbb{M}}}
V\left(  \rho\left(  x\right)  \right)  \left\vert \nabla_{g}\left(
f-f\left(  \exp_{O}\left(  Ru\right)  \right)  \right)  \right\vert _{g}%
^{2}dx-%
{\displaystyle\int\limits_{\mathbb{M}}}
W\left(  \rho\left(  x\right)  \right)  \left\vert f-f\left(  \exp_{O}\left(
Ru\right)  \right)  \right\vert ^{2}dV_{g}\\
&  =%
{\displaystyle\int\limits_{\mathbb{M}}}
V\left(  \rho\left(  x\right)  \right)  \varphi^{2}\left(  \rho\left(
x\right)  \right)  \left\vert \nabla_{g}\left(  \frac{f-f\left(  \exp
_{O}\left(  Ru\right)  \right)  }{\varphi\left(  \rho\left(  x\right)
\right)  }\right)  \right\vert _{g}^{2}dV_{g}\\
&  -%
{\displaystyle\int\limits_{\mathbb{M}}}
V\left(  \rho\left(  x\right)  \right)  \left\vert f-f\left(  \exp_{O}\left(
Ru\right)  \right)  \right\vert ^{2}\frac{\varphi^{\prime}\left(  \rho\left(
x\right)  \right)  }{\varphi\left(  \rho\left(  x\right)  \right)  }%
\frac{J^{\prime}\left(  u,\rho\right)  }{J\left(  u,\rho\right)  }dV_{g}%
\end{align*}
and%
\begin{align*}
&
{\displaystyle\int\limits_{\mathbb{M}}}
V\left(  \rho\left(  x\right)  \right)  \left\vert \partial_{\rho}\left(
f-f\left(  \exp_{O}\left(  Ru\right)  \right)  \right)  \right\vert _{g}%
^{2}dx-%
{\displaystyle\int\limits_{\mathbb{M}}}
W\left(  \rho\left(  x\right)  \right)  \left\vert f-f\left(  \exp_{O}\left(
Ru\right)  \right)  \right\vert ^{2}dV_{g}\\
&  =%
{\displaystyle\int\limits_{\mathbb{M}}}
V\left(  \rho\left(  x\right)  \right)  \varphi^{2}\left(  \rho\left(
x\right)  \right)  \left\vert \partial_{\rho}\left(  \frac{f-f\left(  \exp
_{O}\left(  Ru\right)  \right)  }{\varphi\left(  \rho\left(  x\right)
\right)  }\right)  \right\vert ^{2}dV_{g}\\
&  -%
{\displaystyle\int\limits_{\mathbb{M}}}
V\left(  \rho\left(  x\right)  \right)  \left\vert f-f\left(  \exp_{O}\left(
Ru\right)  \right)  \right\vert ^{2}\frac{\varphi^{\prime}\left(  \rho\left(
x\right)  \right)  }{\varphi\left(  \rho\left(  x\right)  \right)  }%
\frac{J^{\prime}\left(  u,\rho\right)  }{J\left(  u,\rho\right)  }dV_{g}.
\end{align*}
Here $J^{\prime}\left(  u,t\right)  =\frac{\partial J\left(  u,t\right)
}{\partial t}$ and $x=\exp_{O}\left(  \rho u\right)  $.
\end{theorem}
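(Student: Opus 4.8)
The plan is to reduce both identities to the one‑dimensional computation that underlies Theorem~\ref{T1}, the only genuinely new ingredient being the treatment of the radius $r=R$, where $\varphi$ is now allowed to vanish or blow up. The first point I would record is that the \emph{ground‑state substitution} isolates the radial structure, so that the gradient identity and the radial identity admit essentially the same proof. Writing $h=f-f(\exp_O(Ru))$ and $g=h/\varphi$, and using that $\varphi=\varphi(\rho)$ is radial, Gauss's lemma gives $\nabla_g h=g\varphi'(\rho)\nabla_g\rho+\varphi(\rho)\nabla_g g$ with $|\nabla_g\rho|_g=1$ and $\langle\nabla_g\rho,\nabla_g g\rangle_g=\partial_\rho g$, whence
\[
|\nabla_g h|_g^2=(\varphi')^2 g^2+\varphi\varphi'\,\partial_\rho(g^2)+\varphi^2|\nabla_g g|_g^2 .
\]
After multiplying by $V$, the last term reproduces exactly the term $V\varphi^2|\nabla_g(h/\varphi)|_g^2$ on the right‑hand side and cancels; what remains is purely radial and coincides with the corresponding terms in the radial identity (where $|\nabla_g g|_g^2$ is simply replaced by $(\partial_\rho g)^2$). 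Thus it suffices to prove, for each fixed $u\in\mathbb{S}^{N-1}$, a single one‑dimensional identity after passing to geodesic polar coordinates.

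Passing to polar coordinates with volume density $m(t)=t^{N-1}J(u,t)$, and abbreviating $\varphi=\varphi(t)$, $V=V(t)$, $W=W(t)$ and $h=f(\exp_O(tu))-f(\exp_O(Ru))$, the radial identity amounts to showing that the sum over $I=(0,R)$ and $I=(R,\infty)$ of the integrals
\[
\int_I\Big[V(\varphi')^2 g^2+V\varphi\varphi'(g^2)'-W\varphi^2 g^2+V\varphi\varphi' g^2\frac{J'}{J}\Big]m\,dt
\]
vanishes. I would integrate the second term by parts in $t$. Using the Bessel‑pair equation $(t^{N-1}V\varphi')'+t^{N-1}W\varphi=0$, a direct computation gives
\[
(V\varphi' m)'=-W\varphi\,m+V\varphi'\,m\,\frac{J'}{J},\qquad (V\varphi\varphi' m)'=V(\varphi')^2 m-W\varphi^2 m+V\varphi\varphi' m\,\frac{J'}{J},
\]
so that all four bulk terms cancel identically and each integral collapses to the boundary contribution $\big[V\varphi\varphi' g^2 m\big]_{\partial I}=\big[V\frac{\varphi'}{\varphi}h^2 m\big]_{\partial I}$.

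Summing over the two intervals produces boundary contributions at $t=0$, $t=R^-$, $t=R^+$ and $t=\infty$. Since $R\le d(O,Cut(O))$, the density $m(R)=R^{N-1}J(u,R)$ is finite and positive, so the contributions at $R$ are of the form
\[
m(R)\lim_{t\to R^\pm}V(t)\frac{\varphi'(t)}{\varphi(t)}\big|f(\exp_O(tu))-f(\exp_O(Ru))\big|^2 ,
\]
which vanish precisely by hypothesis (\ref{C}). The contributions at $t=0$ and $t=\infty$ vanish because $f$ is compactly supported in $\mathbb{M}\setminus(Cut(O)\cup\{O\})$: there $h$ is constant along each ray, and one is left with the weight $V\frac{\varphi'}{\varphi}m$, which tends to $0$ as in the proof of Theorem~\ref{T1}. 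Integrating the resulting radial identity against $du$ over $\mathbb{S}^{N-1}$ and undoing the change of variables then gives both global identities.

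The step I expect to be the main obstacle is the passage to the limit at $r=R$. Because $\varphi$ degenerates there, one must check that the substitution $g=h/\varphi$ yields an integrand that is integrable up to $R$ from each side, so that the integration by parts is legitimate and the boundary limit is genuinely the quantity appearing in (\ref{C}). This is exactly the role of replacing $f$ by $h=f-f(\exp_O(Ru))$: the subtraction forces $h$ to vanish at $r=R$ along every ray, compensating the singular factor $1/\varphi$ and keeping $g$ and its radial derivative under control across $R$; without it the boundary term at $R$ could not be tamed. A secondary technical point is the vanishing of the boundary terms at $0$ and $\infty$ together with the convergence of the global integrals, which is where the full Bessel‑pair hypotheses (and not merely the differential equation) are needed.
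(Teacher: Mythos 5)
Your proposal is correct and follows essentially the same route as the paper: the ground-state substitution $f-f(\exp_{O}(Ru))=\varphi v$, integration by parts against the Bessel-pair ODE separately on $(0,R)$ and $(R,\infty)$, and the use of condition (\ref{C}) to kill the two interface contributions at $r=R$ --- the paper phrases the integration by parts via the divergence theorem together with $\Delta_{g}\rho=\frac{N-1}{\rho}+\frac{J^{\prime}}{J}$, which is exactly equivalent to your radial computation $\left(V\varphi\varphi^{\prime}m\right)^{\prime}=V(\varphi^{\prime})^{2}m-W\varphi^{2}m+V\varphi\varphi^{\prime}m\frac{J^{\prime}}{J}$ in polar coordinates. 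The one point both arguments leave implicit is the vanishing of the boundary contributions at $0$ and $\infty$, where $f-f(\exp_{O}(Ru))$ is a nonzero constant rather than zero as in Theorem \ref{T1}; you at least flag that this needs more than the bare ODE, so your write-up is not weaker than the paper's on this score.
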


By applying our main results to some explicit Bessel pairs on Cartan--Hadamard
manifold, we obtain many interesting Hardy identities and inequalities. For
instance, on the hyperbolic space, we obtain the following identities and
inequalities that substantially improve (\ref{Ha1}) as consequences of our
main results:

\begin{theorem}
\label{T3.1}For $f\in C_{0}^{\infty}\left(  \mathbb{H}^{N}\right)  :$%
\begin{align*}
&
{\displaystyle\int\limits_{\mathbb{H}^{N}}}
\left\vert \nabla_{\mathbb{H}}f\right\vert ^{2}dV_{\mathbb{H}}-\left(
\frac{N-2}{2}\right)  ^{2}%
{\displaystyle\int\limits_{\mathbb{H}^{N}}}
\frac{\left\vert f\right\vert ^{2}}{\rho^{2}\left(  x\right)  }dV_{\mathbb{H}%
}\\
&  =%
{\displaystyle\int\limits_{\mathbb{H}^{N}}}
\frac{1}{\rho^{N-2}\left(  x\right)  }\left\vert \nabla_{\mathbb{H}}\left(
\rho^{\frac{N-2}{2}}\left(  x\right)  f\right)  \right\vert ^{2}%
dV_{\mathbb{H}}\\
&  +\frac{\left(  N-2\right)  \left(  N-1\right)  }{2}%
{\displaystyle\int\limits_{\mathbb{H}^{N}}}
\frac{\rho\left(  x\right)  \cosh\rho\left(  x\right)  -\sinh\rho\left(
x\right)  }{\rho^{2}\left(  x\right)  \sinh\rho\left(  x\right)  }\left\vert
f\right\vert ^{2}dV_{\mathbb{H}}%
\end{align*}
and
\begin{align*}
&
{\displaystyle\int\limits_{\mathbb{H}^{N}}}
\left\vert \partial_{\rho}f\right\vert ^{2}dV_{\mathbb{H}}-\left(  \frac
{N-2}{2}\right)  ^{2}%
{\displaystyle\int\limits_{\mathbb{H}^{N}}}
\frac{\left\vert f\right\vert ^{2}}{\rho^{2}\left(  x\right)  }dV_{\mathbb{H}%
}\\
&  =%
{\displaystyle\int\limits_{\mathbb{H}^{N}}}
\frac{1}{\rho^{N-2}\left(  x\right)  }\left\vert \partial_{\rho}\left(
\rho^{\frac{N-2}{2}}\left(  x\right)  f\right)  \right\vert ^{2}%
dV_{\mathbb{H}}\\
&  +\frac{\left(  N-2\right)  \left(  N-1\right)  }{2}%
{\displaystyle\int\limits_{\mathbb{H}^{N}}}
\frac{\rho\left(  x\right)  \cosh\rho\left(  x\right)  -\sinh\rho\left(
x\right)  }{\rho^{2}\left(  x\right)  \sinh\rho\left(  x\right)  }\left\vert
f\right\vert ^{2}dV_{\mathbb{H}}%
\end{align*}

\end{theorem}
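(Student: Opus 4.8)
The plan is to specialize Theorem~\ref{T1} to $\mathbb{M}=\mathbb{H}^N$ with the explicit Bessel pair $V\equiv 1$ and $W(r)=\left(\frac{N-2}{2}\right)^2 r^{-2}$, and then to remove the restriction that $f$ vanish near the pole $O$ by an approximation argument. First I would check that $\varphi(r)=r^{-\frac{N-2}{2}}$ is a positive solution of the associated ODE
\[
\left(r^{N-1}\varphi'(r)\right)' + \left(\tfrac{N-2}{2}\right)^2 r^{N-3}\varphi(r)=0
\]
on $(0,\infty)$: a direct differentiation gives $\left(r^{N-1}\varphi'\right)'=-\left(\frac{N-2}{2}\right)^2 r^{\frac{N-4}{2}}$ while $\left(\frac{N-2}{2}\right)^2 r^{N-3}\varphi=\left(\frac{N-2}{2}\right)^2 r^{\frac{N-4}{2}}$, and these cancel, so $\left(r^{N-1}V,r^{N-1}W\right)$ is a Bessel pair. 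With this choice one has $V\varphi^2=r^{-(N-2)}$ and $f/\varphi=\rho^{\frac{N-2}{2}}f$, which produces the weighted gradient term $\frac{1}{\rho^{N-2}}\big|\nabla_{\mathbb{H}}(\rho^{\frac{N-2}{2}}f)\big|^2$ on the right-hand side of the asserted identity.

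Next I would compute the density correction. Since $\mathbb{H}^N$ is rotationally symmetric about $O$ with metric $d\rho^2+\sinh^2\!\rho\,d\omega^2$, the density function of Theorem~\ref{T1} is $J(u,t)=\left(\frac{\sinh t}{t}\right)^{N-1}$, independent of $u$, whence
\[
\frac{J'(u,t)}{J(u,t)}=(N-1)\left(\coth t-\tfrac1t\right)=(N-1)\,\frac{t\cosh t-\sinh t}{t\sinh t}.
\]
Combining this with $\frac{\varphi'}{\varphi}=-\frac{N-2}{2\rho}$, the correction term $-V|f|^2\frac{\varphi'}{\varphi}\frac{J'}{J}$ appearing in Theorem~\ref{T1} becomes exactly $\frac{(N-2)(N-1)}{2}\,\frac{\rho\cosh\rho-\sinh\rho}{\rho^2\sinh\rho}\,|f|^2$. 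At this stage Theorem~\ref{T1} (applied with any finite $R\le d(O,\mathrm{Cut}(O))=\infty$ chosen large enough to contain $\mathrm{supp}\,f$) yields both claimed identities for every $f\in C_0^\infty(\mathbb{H}^N\setminus\{O\})$; the $\partial_\rho$ version follows verbatim from the second identity of Theorem~\ref{T1} with the same data.

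The remaining, and main, obstacle is to pass from $f$ supported away from $O$ to an arbitrary $f\in C_0^\infty(\mathbb{H}^N)$. I would fix radial cutoffs $\eta_\epsilon$ with $\eta_\epsilon\equiv 0$ on $B_\epsilon(O)$, $\eta_\epsilon\equiv 1$ off $B_{2\epsilon}(O)$ and $|\nabla_{\mathbb{H}}\eta_\epsilon|\lesssim\epsilon^{-1}$, apply the identities to $f\eta_\epsilon\in C_0^\infty(\mathbb{H}^N\setminus\{O\})$, and let $\epsilon\to 0$. The key point is that, because $N\ge 3$, every integrand in the statement is locally integrable near $O$ for smooth $f$: the weights $\frac{|f|^2}{\rho^2}$ and $\frac{1}{\rho^{N-2}}\big|\nabla_{\mathbb{H}}(\rho^{\frac{N-2}{2}}f)\big|^2\sim\rho^{-2}$, together with the bounded factor $\frac{\rho\cosh\rho-\sinh\rho}{\rho^2\sinh\rho}\to\frac13$, all contribute $O(\rho^{N-3})$ after multiplication by the volume element, hence are integrable. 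Dominated convergence then handles the ``$\eta_\epsilon^2$'' parts, while the cutoff error terms are controlled by
\[
\int_{B_{2\epsilon}\setminus B_\epsilon}\Big(|f|^2+\rho^{-(N-2)}\rho^{N-2}|f|^2\Big)|\nabla_{\mathbb{H}}\eta_\epsilon|^2\,dV_{\mathbb{H}}\lesssim \epsilon^{-2}\,\mathrm{vol}(B_{2\epsilon})\lesssim \epsilon^{N-2}\to 0,
\]
the cross terms being dispatched by Cauchy--Schwarz. Verifying convergence of the weighted gradient term $\frac{1}{\rho^{N-2}}\big|\nabla_{\mathbb{H}}(\rho^{\frac{N-2}{2}}f\eta_\epsilon)\big|^2$ is the most delicate estimate, since both the $\rho^{-(N-2)}$ singularity and the derivative falling on $\rho^{\frac{N-2}{2}}$ must be tracked simultaneously; this is precisely where the hypothesis $N\ge 3$ enters.
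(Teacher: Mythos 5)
Your proposal is correct and follows essentially the same route as the paper: the authors likewise apply Theorem \ref{T1} (in its hyperbolic form, Theorem \ref{H1}) to the Bessel pair $\left(r^{N-1},\, r^{N-1}\left(\frac{N-2}{2}\right)^{2}r^{-2}\right)$ with $\varphi(r)=r^{-\frac{N-2}{2}}$ and the density ratio $\frac{J'}{J}=(N-1)\frac{t\cosh t-\sinh t}{t\sinh t}$. The only difference is that you explicitly carry out the cutoff argument near the pole to pass from $f$ supported away from $O$ to general $f\in C_{0}^{\infty}(\mathbb{H}^{N})$, a step the paper leaves implicit; your estimates for it are sound.
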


Obviously, our theorem gives the exact remainder and therefore provides the
direct understanding for the Hardy inequality (\ref{Ha1}). Also, as a
consequence of the above identities, we get that
\begin{align*}
&
{\displaystyle\int\limits_{\mathbb{H}^{N}}}
\left\vert \nabla_{\mathbb{H}}f\right\vert ^{2}dV_{\mathbb{H}}\geq%
{\displaystyle\int\limits_{\mathbb{H}^{N}}}
\left\vert \partial_{\rho}f\right\vert ^{2}dV_{\mathbb{H}}\\
&  \geq\left(  \frac{N-2}{2}\right)  ^{2}%
{\displaystyle\int\limits_{\mathbb{H}^{N}}}
\frac{\left\vert f\right\vert ^{2}}{\rho^{2}\left(  x\right)  }dV_{\mathbb{H}%
}\\
&  +\frac{\left(  N-2\right)  \left(  N-1\right)  }{2}%
{\displaystyle\int\limits_{\mathbb{H}^{N}}}
\frac{\rho\left(  x\right)  \cosh\rho\left(  x\right)  -\sinh\rho\left(
x\right)  }{\rho^{2}\left(  x\right)  \sinh\rho\left(  x\right)  }\left\vert
f\right\vert ^{2}dV_{\mathbb{H}}.
\end{align*}
Therefore, the operator $-\Delta_{\mathbb{H}}-\left(  \frac{N-2}{2}\right)
^{2}\frac{1}{\rho^{2}\left(  x\right)  }$ is subcritical in $\mathbb{H}%
^{N}\setminus\left\{  0\right\}  $, which is in constrast to the situation in
the Euclidean setting \cite[Corollary 2.3.4]{GM1}.

We also present the exact remainder for the Hardy-Poincar\'{e}--Sobolev
inequality (\ref{Hy2}) and thus sharpen the inequality (\ref{Hy1}) and
illustrate more precise understanding of (\ref{Hy2}):

\begin{theorem}
\label{T3.2}For $f\in C_{0}^{\infty}\left(  \mathbb{H}^{N}\right)  :$%
\begin{align*}
&
{\displaystyle\int\limits_{\mathbb{H}^{N}}}
\left\vert \nabla_{\mathbb{H}}f\right\vert ^{2}dV_{\mathbb{H}}-%
{\displaystyle\int\limits_{\mathbb{H}^{N}}}
\left[  \frac{\left(  N-1\right)  ^{2}}{4}+\frac{1}{4}\frac{1}{\rho^{2}\left(
x\right)  }+\frac{\left(  N-1\right)  \left(  N-3\right)  }{4}\frac{1}%
{\sinh^{2}\rho\left(  x\right)  }\right]  \left\vert f\right\vert
^{2}dV_{\mathbb{H}}\\
&  =%
{\displaystyle\int\limits_{\mathbb{H}^{N}}}
\frac{\rho\left(  x\right)  }{\sinh^{N-1}\rho\left(  x\right)  }\left\vert
\nabla_{\mathbb{H}}\left(  \frac{\sinh^{\frac{N-1}{2}}\rho\left(  x\right)
f}{\rho^{\frac{1}{2}}\left(  x\right)  }\right)  \right\vert ^{2}%
dV_{\mathbb{H}},
\end{align*}
and%
\begin{align*}
&
{\displaystyle\int\limits_{\mathbb{H}^{N}}}
\left\vert \partial_{\rho}f\right\vert ^{2}dV_{\mathbb{H}}-%
{\displaystyle\int\limits_{\mathbb{H}^{N}}}
\left[  \frac{\left(  N-1\right)  ^{2}}{4}+\frac{1}{4}\frac{1}{\rho^{2}\left(
x\right)  }+\frac{\left(  N-1\right)  \left(  N-3\right)  }{4}\frac{1}%
{\sinh^{2}\rho\left(  x\right)  }\right]  \left\vert f\right\vert
^{2}dV_{\mathbb{H}}\\
&  =%
{\displaystyle\int\limits_{\mathbb{H}^{N}}}
\frac{\rho\left(  x\right)  }{\sinh^{N-1}\rho\left(  x\right)  }\left\vert
\partial_{\rho}\left(  \frac{\sinh^{\frac{N-1}{2}}\rho\left(  x\right)
f}{\rho^{\frac{1}{2}}\left(  x\right)  }\right)  \right\vert ^{2}%
dV_{\mathbb{H}}%
\end{align*}
and
\begin{align*}%
{\displaystyle\int\limits_{\mathbb{H}^{N}}}
\left\vert \nabla_{\mathbb{H}}f\right\vert ^{2}dV_{\mathbb{H}}  &  \geq%
{\displaystyle\int\limits_{\mathbb{H}^{N}}}
\left\vert \partial_{\rho}f\right\vert ^{2}dV_{\mathbb{H}}\\
&  \geq%
{\displaystyle\int\limits_{\mathbb{H}^{N}}}
\left[  \frac{\left(  N-1\right)  ^{2}}{4}+\frac{1}{4}\frac{1}{\rho^{2}\left(
x\right)  }+\frac{\left(  N-1\right)  \left(  N-3\right)  }{4}\frac{1}%
{\sinh^{2}\rho\left(  x\right)  }\right]  \left\vert f\right\vert
^{2}dV_{\mathbb{H}}.
\end{align*}

\end{theorem}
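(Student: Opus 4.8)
The plan is to obtain both identities as special cases of the two identities in Theorem \ref{T1}, applied to $\mathbb{M}=\mathbb{H}^{N}$ with $O$ the chosen base point and $R=\infty$ (admissible since $\mathbb{H}^{N}$ is Cartan--Hadamard, so $Cut(O)=\emptyset$ and $d(O,Cut(O))=\infty$). The first task is to record the hyperbolic volume density: in geodesic polar coordinates $dV_{\mathbb{H}}=(\sinh t)^{N-1}\,dt\,du$, so that $J(u,t)\,t^{N-1}=(\sinh t)^{N-1}$, i.e. $J(u,t)=(\sinh t/t)^{N-1}$ is independent of $u$ and $\frac{J'(u,t)}{J(u,t)}=(N-1)\left(\coth t-\frac{1}{t}\right)$.

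Next I would read off the Bessel pair from the right-hand side of the claimed identity. Matching $V\varphi^{2}=\rho/\sinh^{N-1}\rho$ and $f/\varphi=\sinh^{(N-1)/2}\rho\,f/\rho^{1/2}$ forces $V\equiv 1$ and $\varphi(r)=r^{1/2}\sinh^{-(N-1)/2}r$, which is positive on $(0,\infty)$; I then let $W$ be the function determined by the Bessel ODE $(r^{N-1}\varphi')'+r^{N-1}W\varphi=0$, equivalently $W=-\varphi''/\varphi-\frac{N-1}{r}\varphi'/\varphi$. With this data Theorem \ref{T1} yields an identity whose right-hand side is exactly the claimed gradient term $\int_{\mathbb{H}^{N}}\frac{\rho}{\sinh^{N-1}\rho}\big|\nabla_{\mathbb{H}}(\sinh^{(N-1)/2}\rho\,f/\rho^{1/2})\big|^{2}\,dV_{\mathbb{H}}$, together with the extra Jacobian term $-\int_{\mathbb{H}^{N}}|f|^{2}\frac{\varphi'}{\varphi}\frac{J'}{J}\,dV_{\mathbb{H}}$. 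Moving this term to the left, the effective potential becomes $\widetilde{W}=W-\frac{\varphi'}{\varphi}\frac{J'}{J}$, and the entire theorem reduces to the pointwise identity $\widetilde{W}(\rho)=\frac{(N-1)^{2}}{4}+\frac{1}{4\rho^{2}}+\frac{(N-1)(N-3)}{4\sinh^{2}\rho}$.

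This pointwise identity is the computational heart of the proof. Using $\frac{\varphi'}{\varphi}=\frac{1}{2r}-\frac{N-1}{2}\coth r$, the relation $\varphi''/\varphi=(\varphi'/\varphi)'+(\varphi'/\varphi)^{2}$, and $\frac{d}{dr}\coth r=-\sinh^{-2}r$, I would expand both $W$ and $\frac{\varphi'}{\varphi}\frac{J'}{J}$. The $(\coth r)/r$ cross terms cancel between the two, leaving only $\frac{1}{r^{2}}$, constant, and $\sinh^{-2}r$ contributions (the last via $\coth^{2}r=1+\sinh^{-2}r$); collecting them produces precisely the claimed potential, with the $\sinh^{-2}r$ coefficient simplifying to $\frac{(N-1)^{2}}{4}-\frac{N-1}{2}=\frac{(N-1)(N-3)}{4}$. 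The second (radial) identity follows verbatim from the second identity of Theorem \ref{T1} with the same $V,W,\varphi$. Finally, the displayed inequality chain is immediate: $\int|\nabla_{\mathbb{H}}f|^{2}\geq\int|\partial_{\rho}f|^{2}$ by Gauss's lemma ($|\partial_{\rho}f|\leq|\nabla_{\mathbb{H}}f|$), while $\int|\partial_{\rho}f|^{2}\geq\int\widetilde{W}|f|^{2}$ because the remainder $\int\frac{\rho}{\sinh^{N-1}\rho}|\partial_{\rho}(\cdot)|^{2}$ on the right of the radial identity is nonnegative.

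The main obstacle I anticipate is not the algebra but two matching issues with the hypotheses of Theorem \ref{T1}. First, Theorem \ref{T1} is stated for $f$ supported away from $O$, whereas Theorem \ref{T3.2} is claimed for all $f\in C_{0}^{\infty}(\mathbb{H}^{N})$; I would close this gap by a radial cutoff approximation near $O$, relying on the finiteness of the right-hand side for $N\geq 3$ (its integrand against $dV_{\mathbb{H}}$ behaves like $\rho^{N-3}$ near $O$) and on the standard estimate $\int_{B_{2\epsilon}}|\nabla\psi_{\epsilon}|^{2}|f|^{2}=O(\epsilon^{N-2})\to 0$ for a cutoff $\psi_{\epsilon}$. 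Second, the $W$ produced by the Bessel ODE is positive near $O$ (where $W\sim\frac{(N-2)^{2}}{4\rho^{2}}$) but tends to $-\frac{(N-1)^{2}}{4}$ as $\rho\to\infty$, so it is not globally positive; since the identity of Theorem \ref{T1} is proved by integration by parts and uses only that $\varphi$ is a positive solution of the ODE, I would invoke that identity directly, the sign of $W$ being irrelevant to the identity itself, which is what makes the reduction legitimate.
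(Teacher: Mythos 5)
Your proposal is correct and follows essentially the same route as the paper: both rest on the Hardy identity of Theorem \ref{T1} (in the form of Theorem \ref{H1}) with the ground state $\varphi(r)=r^{1/2}\sinh^{-\frac{N-1}{2}}r$, the only difference being that you apply it in a single step with $V\equiv 1$, whereas the paper factors the same substitution through Theorem \ref{T3.1} (with $\varphi_1=r^{-\frac{N-2}{2}}$) followed by a second application with $V=r^{2-N}$ and $\varphi_2=(r/\sinh r)^{\frac{N-1}{2}}$, the product $\varphi_1\varphi_2$ being exactly your $\varphi$. Your pointwise computation of the effective potential checks out, and your two caveats (the cutoff near $O$ and the irrelevance of the sign of $W$ to the identity) are legitimate points that the paper itself glosses over.
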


We also obtain the following Hardy inequalities on hyperbolic spaces in the
spirit of Brezis-V\'{a}zquez \cite{BV}:

\begin{theorem}
\label{T3.3}Let $0\leq\alpha\leq\frac{N-2}{2}$. For $f\in C_{0}^{\infty
}\left(  \mathbb{H}^{N}\right)  :$
\begin{align*}
&
{\displaystyle\int\limits_{0<\rho\left(  x\right)  <R}}
\left\vert \nabla_{\mathbb{H}}f\right\vert ^{2}dV_{\mathbb{H}}-\left(
\frac{\left(  N-2\right)  ^{2}}{4}-\alpha^{2}\right)
{\displaystyle\int\limits_{0<\rho\left(  x\right)  <R}}
\frac{\left\vert f\right\vert ^{2}}{\rho^{2}\left(  x\right)  }dV_{\mathbb{H}%
}\\
&  =\frac{z_{\alpha}^{2}}{R^{2}}%
{\displaystyle\int\limits_{0<\rho\left(  x\right)  <R}}
\left\vert f\right\vert ^{2}dV_{\mathbb{H}}+%
{\displaystyle\int\limits_{0<\rho\left(  x\right)  <R}}
\frac{J_{\alpha}^{2}\left(  \frac{z_{\alpha}}{R}\rho\left(  x\right)  \right)
}{\rho^{N-2}\left(  x\right)  }\left\vert \nabla_{\mathbb{H}}\left(  \frac
{f}{\rho^{\frac{2-N}{2}}\left(  x\right)  J_{\alpha}\left(  \frac{z_{\alpha}%
}{R}\rho\left(  x\right)  \right)  }\right)  \right\vert ^{2}dV_{\mathbb{H}}\\
&  -\left(  N-1\right)
{\displaystyle\int\limits_{0<\rho\left(  x\right)  <R}}
\left(  \frac{2-N}{2\rho\left(  x\right)  }+\frac{z_{\alpha}}{R}%
\frac{J_{\alpha}^{\prime}\left(  \frac{z_{\alpha}}{R}\rho\left(  x\right)
\right)  }{J_{\alpha}\left(  \frac{z_{\alpha}}{R}\rho\left(  x\right)
\right)  }\right)  \frac{\rho\left(  x\right)  \cosh\rho\left(  x\right)
-\sinh\rho\left(  x\right)  }{\rho\left(  x\right)  \sinh\rho\left(  x\right)
}\left\vert f\right\vert ^{2}dV_{\mathbb{H}}%
\end{align*}
and
\begin{align*}
&
{\displaystyle\int\limits_{0<\rho\left(  x\right)  <R}}
\left\vert \partial_{\rho}f\right\vert ^{2}dV_{\mathbb{H}}-\left(
\frac{\left(  N-2\right)  ^{2}}{4}-\alpha^{2}\right)
{\displaystyle\int\limits_{0<\rho\left(  x\right)  <R}}
\frac{\left\vert f\right\vert ^{2}}{\rho^{2}\left(  x\right)  }dV_{\mathbb{H}%
}\\
&  =\frac{z_{\alpha}^{2}}{R^{2}}%
{\displaystyle\int\limits_{0<\rho\left(  x\right)  <R}}
\left\vert f\right\vert ^{2}dV_{\mathbb{H}}+%
{\displaystyle\int\limits_{0<\rho\left(  x\right)  <R}}
\frac{J_{\alpha}^{2}\left(  \frac{z_{\alpha}}{R}\rho\left(  x\right)  \right)
}{\rho^{N-2}\left(  x\right)  }\left\vert \partial_{\rho}\left(  \frac{f}%
{\rho^{\frac{2-N}{2}}\left(  x\right)  J_{\alpha}\left(  \frac{z_{\alpha}}%
{R}\rho\left(  x\right)  \right)  }\right)  \right\vert ^{2}dV_{\mathbb{H}}\\
&  -\left(  N-1\right)
{\displaystyle\int\limits_{0<\rho\left(  x\right)  <R}}
\left(  \frac{2-N}{2\rho\left(  x\right)  }+\frac{z_{\alpha}}{R}%
\frac{J_{\alpha}^{\prime}\left(  \frac{z_{\alpha}}{R}\rho\left(  x\right)
\right)  }{J_{\alpha}\left(  \frac{z_{\alpha}}{R}\rho\left(  x\right)
\right)  }\right)  \frac{\rho\left(  x\right)  \cosh\rho\left(  x\right)
-\sinh\rho\left(  x\right)  }{\rho\left(  x\right)  \sinh\rho\left(  x\right)
}\left\vert f\right\vert ^{2}dV_{\mathbb{H}}%
\end{align*}
As a consequence of these identities, we get that%
\begin{align*}
&
{\displaystyle\int\limits_{0<\rho\left(  x\right)  <R}}
\left\vert \nabla_{\mathbb{H}}f\right\vert ^{2}dV_{\mathbb{H}}\\
&  \geq%
{\displaystyle\int\limits_{0<\rho\left(  x\right)  <R}}
\left\vert \partial_{\rho}f\right\vert ^{2}dV_{\mathbb{H}}\\
&  \geq\left(  \frac{\left(  N-2\right)  ^{2}}{4}-\alpha^{2}\right)
{\displaystyle\int\limits_{0<\rho\left(  x\right)  <R}}
\frac{\left\vert f\right\vert ^{2}}{\rho^{2}\left(  x\right)  }dV_{\mathbb{H}%
}+\frac{z_{\alpha}^{2}}{R^{2}}%
{\displaystyle\int\limits_{0<\rho\left(  x\right)  <R}}
\left\vert f\right\vert ^{2}dV_{\mathbb{H}}\\
&  -\left(  N-1\right)
{\displaystyle\int\limits_{0<\rho\left(  x\right)  <R}}
\left(  \frac{2-N}{2\rho\left(  x\right)  }+\frac{z_{\alpha}}{R}%
\frac{J_{\alpha}^{\prime}\left(  \frac{z_{\alpha}}{R}\rho\left(  x\right)
\right)  }{J_{\alpha}\left(  \frac{z_{\alpha}}{R}\rho\left(  x\right)
\right)  }\right)  \frac{\rho\left(  x\right)  \cosh\rho\left(  x\right)
-\sinh\rho\left(  x\right)  }{\rho\left(  x\right)  \sinh\rho\left(  x\right)
}\left\vert f\right\vert ^{2}dV_{\mathbb{H}}\\
&  \geq\left(  \frac{\left(  N-2\right)  ^{2}}{4}-\alpha^{2}\right)
{\displaystyle\int\limits_{0<\rho\left(  x\right)  <R}}
\frac{\left\vert f\right\vert ^{2}}{\rho^{2}\left(  x\right)  }dV_{\mathbb{H}%
}+\frac{z_{\alpha}^{2}}{R^{2}}%
{\displaystyle\int\limits_{0<\rho\left(  x\right)  <R}}
\left\vert f\right\vert ^{2}dV_{\mathbb{H}}.
\end{align*}
Here $z_{\alpha}$ is the first zero of the Bessel function of the first kind
$J_{\alpha}\left(  z\right)  $.
\end{theorem}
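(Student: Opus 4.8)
The plan is to derive both identities by applying Theorem~\ref{T1} on the hyperbolic space $\mathbb{H}^N$, which is a Cartan--Hadamard manifold (so $Cut(O)=\emptyset$ and any $R>0$ is admissible), to the explicit Bessel pair determined by
\[
V(r)=1,\qquad W(r)=\left(\frac{(N-2)^2}{4}-\alpha^2\right)\frac{1}{r^2}+\frac{z_\alpha^2}{R^2}.
\]
First I would exhibit the positive solution of the associated ODE. Writing out $(r^{N-1}\varphi')'+r^{N-1}W\varphi=0$ and dividing by $r^{N-3}$ turns it into
\[
r^2\varphi''+(N-1)r\varphi'+\left[\frac{(N-2)^2}{4}-\alpha^2+\frac{z_\alpha^2}{R^2}r^2\right]\varphi=0.
\]
The substitution $\varphi(r)=r^{\frac{2-N}{2}}J_\alpha\!\left(\frac{z_\alpha}{R}r\right)$ reduces this, after the change of variable $s=\frac{z_\alpha}{R}r$, to the standard Bessel equation $s^2J_\alpha''+sJ_\alpha'+(s^2-\alpha^2)J_\alpha=0$. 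Since $J_\alpha>0$ on $(0,z_\alpha)$ and $r\mapsto\frac{z_\alpha}{R}r$ maps $(0,R)$ into $(0,z_\alpha)$, this $\varphi$ is positive on $(0,R)$, so $(r^{N-1}V,r^{N-1}W)$ is a genuine Bessel pair there and Theorem~\ref{T1} applies to $f$ supported in the punctured ball $\{0<\rho<R\}$.

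Next I would feed this data into Theorem~\ref{T1}. The two geometric quantities appearing there are computed from the hyperbolic volume density $J(u,t)t^{N-1}=\sinh^{N-1}t$: one gets $\frac{J'}{J}=(N-1)\frac{t\cosh t-\sinh t}{t\sinh t}$, which is exactly the factor in the last integral of the claimed identities, while from $\varphi$ one has $\frac{\varphi'}{\varphi}=\frac{2-N}{2r}+\frac{z_\alpha}{R}\frac{J_\alpha'}{J_\alpha}$. Because $V=1$ and $\varphi^2=r^{2-N}J_\alpha^2$, the remainder term $\int V\varphi^2|\nabla_g(f/\varphi)|_g^2$ of Theorem~\ref{T1} becomes precisely $\int\rho^{-(N-2)}J_\alpha^2|\nabla_\mathbb{H}(f/\varphi)|^2$, and $-\int V|f|^2\frac{\varphi'}{\varphi}\frac{J'}{J}$ becomes the displayed $-(N-1)\int(\cdots)\frac{\rho\cosh\rho-\sinh\rho}{\rho\sinh\rho}|f|^2$. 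Separating off the constant piece $\frac{z_\alpha^2}{R^2}$ of $W$ and moving $\frac{z_\alpha^2}{R^2}\int|f|^2$ to the right-hand side yields the gradient identity, and the radial identity follows verbatim with $\partial_\rho$ in place of $\nabla_\mathbb{H}$.

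For the concluding chain of inequalities, the first bound $\int|\nabla_\mathbb{H}f|^2\ge\int|\partial_\rho f|^2$ is simply Gauss's lemma $|\partial_\rho f|\le|\nabla_g f|_g$ integrated. Dropping the manifestly nonnegative term $\int\rho^{-(N-2)}J_\alpha^2|\partial_\rho(f/\varphi)|^2$ from the radial identity gives the second inequality. The third inequality requires controlling the sign of the last remainder: since $\rho\cosh\rho-\sinh\rho>0$ for $\rho>0$ (its derivative is $\rho\sinh\rho$ and it vanishes at $0$), it suffices to show $\frac{\varphi'}{\varphi}\le0$, i.e.\ that $\varphi$ is nonincreasing on $(0,R)$. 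I expect this monotonicity to be the main technical point, and I would extract it from the ODE itself: $(r^{N-1}\varphi')'=-r^{N-1}W\varphi<0$ shows that $r^{N-1}\varphi'$ is strictly decreasing, while the near-origin asymptotics $\varphi\sim c\,r^{\frac{2-N}{2}+\alpha}$ with exponent $\frac{2-N}{2}+\alpha\le0$ (using $\alpha\le\frac{N-2}{2}$) force $\lim_{r\to0^+}r^{N-1}\varphi'\le0$; hence $r^{N-1}\varphi'<0$, so $\varphi'<0$ throughout $(0,R)$. This makes the last remainder nonnegative, and discarding it delivers the final lower bound by the sum of the Hardy and Poincar\'e terms.
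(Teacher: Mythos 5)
Your proposal is correct and follows essentially the same route as the paper: the authors likewise apply Theorem \ref{T1} (in its hyperbolic form, Theorem \ref{H1}) to the Bessel pair $\left(r^{N-1},\,r^{N-1}\left[\left(\tfrac{(N-2)^2}{4}-\alpha^2\right)\tfrac{1}{r^2}+\tfrac{z_\alpha^2}{R^2}\right]\right)$ with $\varphi(r)=r^{\frac{2-N}{2}}J_\alpha\!\left(\tfrac{z_\alpha}{R}r\right)$, which is exactly your choice with $\lambda=0$. The only difference is that you supply the verifications the paper leaves implicit (reduction to the Bessel equation, and the sign of $\varphi'$ via monotonicity of $r^{N-1}\varphi'$), and these details are accurate.
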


\vskip0.5cm

Our paper is organized as follows: In Section 2, we use our main results on
the Hardy identities to obtain several Hardy type inequalities and their
improvements on Cartan-Hadamard manifolds. In Section 3, we will focus on
deriving the Hardy identities and inequalities on hyperbolic spaces. We also
provide the proofs of Theorem \ref{T3.1}, Theorem \ref{T3.2} and Theorem
\ref{T3.3} in this section. Proofs of main results (Theorem \ref{T1} and
Theorem \ref{T2}) will be presented in Section 4.

\section{Hardy inequalities on Cartan-Hadamard manifolds}

We note that if the sectional curvature $K_{\mathbb{M}}=-b$, then $J\left(
u,t\right)  =J_{b}\left(  t\right)  $ does not depend on $u$. Moreover%
\[
J_{b}\left(  t\right)  =\left\{
\begin{array}
[c]{ll}%
1 & \text{if }b=0\\
\left(  \frac{\sinh\left(  \sqrt{b}t\right)  }{\sqrt{b}t}\right)  ^{N-1} &
\text{if }b>0
\end{array}
\right.  .
\]
Also, if $K_{\mathbb{M}}\leq-b\leq0$, then by the Bishop-Gromov-G\"{u}nther
comparison theorem \cite[page 172]{GHL}, we have that%
\[
\frac{J^{\prime}\left(  u,t\right)  }{J\left(  u,t\right)  }\geq\frac
{J_{b}^{\prime}\left(  t\right)  }{J_{b}\left(  t\right)  }=\frac{N-1}%
{t}\mathbf{D}_{b}\left(  t\right)
\]
where $J^{\prime}\left(  u,t\right)  =\frac{\partial J\left(  u,t\right)
}{\partial t}$,%
\[
\mathbf{D}_{b}\left(  t\right)  =\left\{
\begin{array}
[c]{ll}%
0 & \text{if }t=0\\
t\mathbf{ct}_{b}\left(  t\right)  -1 & \text{if }t>0
\end{array}
\right.
\]
and
\[
\mathbf{ct}_{b}\left(  t\right)  =\left\{
\begin{array}
[c]{ll}%
\frac{1}{t} & \text{if }b=0\\
\sqrt{b}\coth\left(  \sqrt{b}t\right)  & \text{if }b>0
\end{array}
\right.  .
\]
Therefore, we obtain the following Hardy type inequality as a direct
consequence of our Theorem \ref{T1}:

\begin{theorem}
\label{CT1} Let $(\mathbb{M},g)$ be a Cartan-Hadamard manifold of dimension
$N$ and let $O\in\mathbb{M}$. \textit{Let }$0<R\leq\infty$\textit{, }%
$V$\textit{ and }$W$\textit{ be positive }$C^{1}-$\textit{functions on
}$\left(  0,R\right)  $ such that $\left(  r^{N-1}V,r^{N-1}W\right)  $ is a
Bessel pair on $\left(  0,R\right)  $ with nonincreasing positive solution
$\varphi$. Then for $f\in C_{0}^{\infty}\left(  B_{R}\left(  O\right)
\setminus\rho^{-1}\left\{  0\right\}  \right)  :$
\begin{align*}
&
{\displaystyle\int\limits_{B_{R}\left(  O\right)  }}
V\left(  \rho\left(  x\right)  \right)  \left\vert \nabla_{g}f\right\vert
_{g}^{2}dV_{g}-%
{\displaystyle\int\limits_{B_{R}\left(  O\right)  }}
W\left(  \rho\left(  x\right)  \right)  \left\vert f\right\vert ^{2}dV_{g}\\
&  \geq%
{\displaystyle\int\limits_{B_{R}\left(  O\right)  }}
V\left(  \rho\left(  x\right)  \right)  \left\vert \varphi^{2}\left(
\rho\left(  x\right)  \right)  \right\vert \left\vert \nabla_{g}\left(
\frac{f}{\varphi\left(  \rho\left(  x\right)  \right)  }\right)  \right\vert
_{g}^{2}dV_{g}\\
&  -%
{\displaystyle\int\limits_{B_{R}\left(  O\right)  }}
V\left(  \rho\left(  x\right)  \right)  \left\vert f\right\vert ^{2}%
\frac{\varphi^{\prime}\left(  \rho\left(  x\right)  \right)  }{\varphi\left(
\rho\left(  x\right)  \right)  }\frac{J_{b}^{\prime}\left(  \rho\left(
x\right)  \right)  }{J_{b}\left(  \rho\left(  x\right)  \right)  }dV_{g}\\
&  \geq%
{\displaystyle\int\limits_{B_{R}\left(  O\right)  }}
V\left(  \rho\left(  x\right)  \right)  \left\vert \varphi^{2}\left(
\rho\left(  x\right)  \right)  \right\vert \left\vert \nabla_{g}\left(
\frac{f}{\varphi\left(  \rho\left(  x\right)  \right)  }\right)  \right\vert
_{g}^{2}dV_{g}%
\end{align*}
and
\begin{align*}
&
{\displaystyle\int\limits_{B_{R}\left(  O\right)  }}
V\left(  \rho\left(  x\right)  \right)  \left\vert \partial_{\rho}f\right\vert
^{2}dV_{g}-%
{\displaystyle\int\limits_{B_{R}\left(  O\right)  }}
W\left(  \rho\left(  x\right)  \right)  \left\vert f\right\vert ^{2}dV_{g}\\
&  \geq%
{\displaystyle\int\limits_{B_{R}\left(  O\right)  }}
V\left(  \rho\left(  x\right)  \right)  \varphi^{2}\left(  \rho\left(
x\right)  \right)  \left\vert \partial_{\rho}\left(  \frac{f}{\varphi\left(
\rho\left(  x\right)  \right)  }\right)  \right\vert ^{2}dV_{g}\\
&  -%
{\displaystyle\int\limits_{B_{R}\left(  O\right)  }}
V\left(  \rho\left(  x\right)  \right)  \left\vert f\right\vert ^{2}%
\frac{\varphi^{\prime}\left(  \rho\left(  x\right)  \right)  }{\varphi\left(
\rho\left(  x\right)  \right)  }\frac{J_{b}^{\prime}\left(  \rho\left(
x\right)  \right)  }{J_{b}\left(  \rho\left(  x\right)  \right)  }dV_{g}\\
&  \geq%
{\displaystyle\int\limits_{B_{R}\left(  O\right)  }}
V\left(  \rho\left(  x\right)  \right)  \varphi^{2}\left(  \rho\left(
x\right)  \right)  \left\vert \partial_{\rho}\left(  \frac{f}{\varphi\left(
\rho\left(  x\right)  \right)  }\right)  \right\vert ^{2}dV_{g}.
\end{align*}

\end{theorem}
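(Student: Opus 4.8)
The plan is to read off both chains of inequalities directly from the exact Hardy identities of Theorem~\ref{T1}, using only the sign of the cross term together with curvature comparison. First I would fix $f\in C_0^\infty(B_R(O)\setminus\rho^{-1}\{0\})$ and note that, because $(\mathbb{M},g)$ is Cartan--Hadamard, $Cut(O)=\emptyset$ and hence $d(O,Cut(O))=\infty$; thus the admissibility hypothesis $0<R\le d(O,Cut(O))$ of Theorem~\ref{T1} holds for every $0<R\le\infty$, and $\rho$ is smooth on $\mathbb{M}\setminus\{O\}$. Theorem~\ref{T1} then gives the identity
\[
\int_{B_R(O)}V|\nabla_g f|_g^2\,dV_g-\int_{B_R(O)}W|f|^2\,dV_g=\int_{B_R(O)}V\varphi^2\Big|\nabla_g\big(\tfrac{f}{\varphi}\big)\Big|_g^2\,dV_g-\int_{B_R(O)}V|f|^2\,\frac{\varphi'}{\varphi}\,\frac{J'(u,\rho)}{J(u,\rho)}\,dV_g,
\]
together with its $\partial_\rho$ analogue, $\varphi$ being the prescribed positive solution of the Bessel ODE.

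The main step is to bound the last integral from below. Since $\varphi$ is positive and nonincreasing we have $\varphi'\le 0$, so $-\varphi'/\varphi\ge 0$ pointwise on $(0,R)$. The curvature hypothesis $K_{\mathbb{M}}\le -b\le 0$ together with the Bishop--Gromov--G\"{u}nther comparison theorem, recorded just above the statement, yields
\[
\frac{J'(u,\rho(x))}{J(u,\rho(x))}\ \ge\ \frac{J_b'(\rho(x))}{J_b(\rho(x))}=\frac{N-1}{\rho(x)}\,\mathbf{D}_b(\rho(x))\ \ge\ 0,
\]
where nonnegativity of $\mathbf{D}_b$ follows from $t\,\mathbf{ct}_b(t)\ge 1$ for $t>0$. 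Multiplying the ordered nonnegative quantities $J'/J\ge J_b'/J_b\ge 0$ by the nonnegative factor $-\varphi'/\varphi$ preserves the ordering, so pointwise $-(\varphi'/\varphi)(J'/J)\ge -(\varphi'/\varphi)(J_b'/J_b)\ge 0$. Integrating against the nonnegative weight $V|f|^2$ replaces $J'/J$ by $J_b'/J_b$ in the identity, which is exactly the first displayed inequality; discarding the remaining nonnegative term $-\int V|f|^2(\varphi'/\varphi)(J_b'/J_b)\,dV_g$ then gives the second.

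The $\partial_\rho$ chain I would obtain verbatim from the second identity of Theorem~\ref{T1}: the cross term there is literally the same, and only the leading nonnegative contribution $\int V\varphi^2|\partial_\rho(f/\varphi)|^2\,dV_g$ differs. I do not expect any real analytic obstacle, since Theorem~\ref{T1} has already carried out the integration by parts; the only point demanding care is the sign bookkeeping, namely confirming that positivity of $V$ and $\varphi$, nonpositivity of $\varphi'$, and nonnegativity of $J_b'/J_b$ all line up so that multiplying the comparison inequality by $-\varphi'/\varphi$ sends it in the stated direction, with no cancellation.
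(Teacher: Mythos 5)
Your proof is correct and follows essentially the same route as the paper's: apply the exact identities of Theorem \ref{T1} (valid for all $R$ since $Cut(O)=\emptyset$ on a Cartan--Hadamard manifold), then use the Bishop--Gromov--G\"{u}nther comparison $J'/J\ge J_b'/J_b\ge 0$ together with $-\varphi'/\varphi\ge 0$ to bound the cross term from below and finally discard it. The only difference is that you spell out the sign bookkeeping (nonnegativity of $\mathbf{D}_b$ and the admissibility of $R$) more explicitly than the paper's terse three-line argument.
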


\begin{proof}
By the Bishop-Gromov-G\"{u}nther comparison theorem \cite[page 172]{GHL}, we
get\ $\frac{J^{\prime}\left(  u,t\right)  }{J\left(  u,t\right)  }\geq0$,
where $J^{\prime}\left(  u,t\right)  =\frac{\partial J\left(  u,t\right)
}{\partial t}.$ Since $\varphi$ is a nonincreasing function, $-\varphi
^{\prime}\left(  \rho\left(  x\right)  \right)  \frac{J^{\prime}\left(
u,\rho\right)  }{J\left(  u,\rho\right)  }\geq-\varphi^{\prime}\left(
\rho\left(  x\right)  \right)  \frac{J_{b}^{\prime}\left(  \rho\right)
}{J_{b}\left(  \rho\right)  }\geq0$. Hence, we can apply Theorem \ref{T1} to
get the desired result.
\end{proof}

By applying Theorem \ref{CT1} to particular Bessel pairs, we obtain several
Hardy type inequalities with remainder terms on $\mathbb{M}$. These results
are listed as follows.

\begin{corollary}
\label{C1}for $\lambda<N-2$ and $f\in C_{0}^{\infty}\left(  \mathbb{M}%
\setminus\rho^{-1}\left\{  0\right\}  \right)  :$%
\begin{align*}
&
{\displaystyle\int\limits_{\mathbb{M}}}
\frac{\left\vert \nabla_{g}f\right\vert _{g}^{2}}{\rho^{\lambda}\left(
x\right)  }dV_{g}-\left(  \frac{N-\lambda-2}{2}\right)  ^{2}%
{\displaystyle\int\limits_{\mathbb{M}}}
\frac{\left\vert f\right\vert ^{2}}{\rho^{\lambda+2}\left(  x\right)  }%
dV_{g}\\
&  \geq%
{\displaystyle\int\limits_{\mathbb{M}}}
\frac{1}{\rho^{N-2}\left(  x\right)  }\left\vert \nabla_{g}\left(  \rho
^{\frac{N-\lambda-2}{2}}\left(  x\right)  f\right)  \right\vert _{g}^{2}dV_{g}%
\end{align*}
and
\begin{align}
&
{\displaystyle\int\limits_{\mathbb{M}}}
\frac{\left\vert \partial_{\rho}f\right\vert ^{2}}{\rho^{\lambda}\left(
x\right)  }dV_{g}-\left(  \frac{N-\lambda-2}{2}\right)  ^{2}%
{\displaystyle\int\limits_{\mathbb{M}}}
\frac{\left\vert f\right\vert ^{2}}{\rho^{\lambda+2}\left(  x\right)  }%
dV_{g}\label{C1.1}\\
&  \geq%
{\displaystyle\int\limits_{\mathbb{M}}}
\frac{1}{\rho^{N-2}\left(  x\right)  }\left\vert \partial_{\rho}\left(
\rho^{\frac{N-\lambda-2}{2}}\left(  x\right)  f\right)  \right\vert ^{2}%
dV_{g}.\nonumber
\end{align}

\end{corollary}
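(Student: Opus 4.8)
The plan is to apply Theorem~\ref{CT1} on a Cartan--Hadamard manifold with $R=\infty$ (so that $B_{R}(O)=\mathbb{M}$) to an explicit Bessel pair whose positive solution $\varphi$ is a pure power of $r$. The choice is dictated by matching the target remainder term: I want the weight $V\varphi^{2}$ appearing in the conclusion of Theorem~\ref{CT1} to equal $\rho^{2-N}$ and the ground-state quotient $f/\varphi$ to equal $\rho^{\frac{N-\lambda-2}{2}}f$. This forces
\[
V(r)=r^{-\lambda},\qquad \varphi(r)=r^{-\frac{N-\lambda-2}{2}},
\]
and, reading the Hardy constant and weight off the left-hand side of the claimed inequality, the potential $W(r)=\left(\tfrac{N-\lambda-2}{2}\right)^{2}r^{-\lambda-2}$.

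First I would verify that $(r^{N-1}V,r^{N-1}W)$ is a Bessel pair on $(0,\infty)$ with this $\varphi$. Since $r^{N-1}V=r^{N-1-\lambda}$, substituting the monomial $\varphi(r)=r^{-\frac{N-\lambda-2}{2}}$ into the Bessel ODE $\left(r^{N-1}V\varphi'\right)'+r^{N-1}W\varphi=0$ reduces, after differentiating the powers, to the scalar identity $\beta^{2}-\beta(N-2-\lambda)+\left(\tfrac{N-\lambda-2}{2}\right)^{2}=0$ with $\beta=\tfrac{N-\lambda-2}{2}$; this is the perfect square $\big(\beta-\tfrac{N-2-\lambda}{2}\big)^{2}=0$, so the identity holds. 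This single monomial computation is the only algebra in the argument and is routine. The functions $V$ and $W$ are clearly positive and smooth on $(0,\infty)$, as required.

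Next I would check the monotonicity hypothesis of Theorem~\ref{CT1}: because $\lambda<N-2$ the exponent $\tfrac{N-\lambda-2}{2}$ is strictly positive, so $\varphi(r)=r^{-\frac{N-\lambda-2}{2}}$ is a positive, strictly decreasing solution, exactly as needed. Theorem~\ref{CT1} then applies directly, and substituting $V\varphi^{2}=\rho^{-\lambda}\rho^{-(N-\lambda-2)}=\rho^{2-N}$ together with $f/\varphi=\rho^{\frac{N-\lambda-2}{2}}f$ into its two conclusions yields the two displayed inequalities of the corollary, the first with $|\nabla_{g}\cdot|_{g}$ and the second with $\partial_{\rho}$. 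There is no genuine obstacle: the entire content lies in selecting $\varphi$ correctly, and this power is simply the Riemannian analogue of the classical Euclidean ground-state substitution $f\mapsto\rho^{\frac{N-\lambda-2}{2}}f$ underlying the weighted Hardy inequality.
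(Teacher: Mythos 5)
Your proposal is correct and coincides with the paper's own proof: the authors likewise apply Theorem~\ref{CT1} with $R=\infty$ to the Bessel pair $\left(r^{N-1}r^{-\lambda},\,r^{N-1}r^{-\lambda}\tfrac{(N-\lambda-2)^{2}}{4}\tfrac{1}{r^{2}}\right)$ and $\varphi(r)=r^{\frac{2-N+\lambda}{2}}$, which is exactly your choice. The only difference is that you spell out the (routine) verification of the Bessel ODE and of the monotonicity of $\varphi$, which the paper leaves implicit.
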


\begin{proof}
We apply Theorem \ref{CT1} to the Bessel pair $\left(  r^{N-1}r^{-\lambda
},r^{N-1}r^{-\lambda}\frac{\left(  N-\lambda-2\right)  ^{2}}{4}\frac{1}{r^{2}%
}\right)  $ on $\left(  0,\infty\right)  $ with $\varphi\left(  r\right)
=r^{\frac{2-N+\lambda}{2}}$ to get the desired results.
\end{proof}

In the critical case $\lambda=N-2$, we have

\begin{corollary}
\label{C2}Let $R>0$. We have for $f\in C_{0}^{\infty}\left(  B_{R}\left(
O\right)  \setminus\rho^{-1}\left\{  0\right\}  \right)  :$
\begin{align*}
&
{\displaystyle\int\limits_{0<\rho\left(  x\right)  <R}}
\frac{\left\vert \nabla_{g}f\right\vert _{g}^{2}}{\rho^{N-2}\left(  x\right)
}dV_{g}-\frac{1}{4}%
{\displaystyle\int\limits_{0<\rho\left(  x\right)  <R}}
\frac{\left\vert f\left(  x\right)  \right\vert ^{2}}{\rho^{N}\left(
x\right)  \left\vert \ln\frac{\rho\left(  x\right)  }{R}\right\vert ^{2}%
}dV_{g}\\
&  \geq%
{\displaystyle\int\limits_{0<\rho\left(  x\right)  <R}}
\frac{1}{\rho^{N-2}\left(  x\right)  }\left\vert \ln\frac{\rho\left(
x\right)  }{R}\right\vert \left\vert \nabla_{g}\left(  \frac{f\left(
x\right)  }{\sqrt{\left\vert \ln\frac{\left\vert x\right\vert }{R}\right\vert
}}\right)  \right\vert _{g}^{2}dV_{g}%
\end{align*}
and%
\begin{align*}
&
{\displaystyle\int\limits_{0<\rho\left(  x\right)  <R}}
\frac{\left\vert \partial_{\rho}f\right\vert ^{2}}{\rho^{N-2}\left(  x\right)
}dV_{g}-\frac{1}{4}%
{\displaystyle\int\limits_{0<\rho\left(  x\right)  <R}}
\frac{\left\vert f\left(  x\right)  \right\vert ^{2}}{\rho^{N}\left(
x\right)  \left\vert \ln\frac{\rho\left(  x\right)  }{R}\right\vert ^{2}%
}dV_{g}\\
&  \geq%
{\displaystyle\int\limits_{0<\rho\left(  x\right)  <R}}
\frac{1}{\rho^{N-2}\left(  x\right)  }\left\vert \ln\frac{\rho\left(
x\right)  }{R}\right\vert \left\vert \partial_{\rho}\left(  \frac{f\left(
x\right)  }{\sqrt{\left\vert \ln\frac{\left\vert x\right\vert }{R}\right\vert
}}\right)  \right\vert ^{2}dV_{g}.
\end{align*}

\end{corollary}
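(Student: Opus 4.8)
The plan is to derive Corollary \ref{C2} as a direct application of Theorem \ref{CT1}, exactly as Corollary \ref{C1} was obtained, but now choosing the Bessel pair adapted to the critical exponent $\lambda = N-2$. The natural candidate is
\[
V(r) = r^{2-N}, \qquad W(r) = \frac{1}{4\, r^N \left|\ln\frac{r}{R}\right|^2},
\]
so that $r^{N-1}V(r) = r$ and $r^{N-1}W(r) = \frac{1}{4r\left(\ln\frac{r}{R}\right)^2}$ on $(0,R)$; both functions are positive and $C^1$ there since $\ln\frac{r}{R} \neq 0$ for $0 < r < R$. The candidate solution is $\varphi(r) = \sqrt{\left|\ln\frac{r}{R}\right|} = \sqrt{\ln\frac{R}{r}}$, which plays the role of the logarithmic weight appearing in the statement.

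First I would verify that $(r^{N-1}V, r^{N-1}W)$ is a Bessel pair on $(0,R)$ with this $\varphi$, i.e. that $\varphi$ solves $\left(r\varphi'\right)' + \frac{1}{4r\left(\ln\frac{r}{R}\right)^2}\varphi = 0$. Writing $u = \ln\frac{R}{r}$ (so $u>0$ on $(0,R)$ and $\frac{du}{dr} = -\frac{1}{r}$), one has $\varphi = u^{1/2}$ and a short computation gives $r\varphi' = -\frac{1}{2}u^{-1/2}$ and $(r\varphi')' = -\frac{1}{4r}u^{-3/2}$, which exactly cancels $\frac{1}{4r}u^{-2}\cdot u^{1/2} = \frac{1}{4r}u^{-3/2}$; hence the ODE holds. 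Moreover $\varphi$ is positive on $(0,R)$ and, since $u = \ln\frac{R}{r}$ is decreasing in $r$, so is $\varphi$; thus $\varphi$ is a nonincreasing positive solution, as required by the hypotheses of Theorem \ref{CT1}.

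With these choices the two chains of inequalities in Theorem \ref{CT1} read precisely as the two asserted inequalities: indeed $V(\rho)|\nabla_g f|_g^2 = \rho^{2-N}|\nabla_g f|_g^2$, $W(\rho)|f|^2 = \frac{1}{4}\frac{|f|^2}{\rho^N|\ln(\rho/R)|^2}$, and the surviving remainder $V\varphi^2|\nabla_g(f/\varphi)|_g^2 = \rho^{2-N}\left|\ln\frac{\rho}{R}\right|\left|\nabla_g\bigl(f/\sqrt{|\ln(\rho/R)|}\bigr)\right|_g^2$. The $\partial_\rho$ statement follows identically from the second chain of Theorem \ref{CT1}. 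I expect no real obstacle here: the only nontrivial point is the ODE verification above, and the dropping of the term $-\int V|f|^2\frac{\varphi'}{\varphi}\frac{J_b'}{J_b}$ is already justified inside Theorem \ref{CT1} by the nonincreasing monotonicity of $\varphi$ together with $J_b'/J_b \geq 0$ from the Bishop--Gromov--G\"{u}nther comparison.
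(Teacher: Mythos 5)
Your proposal is correct and follows exactly the paper's own route: the paper proves Corollary \ref{C2} by citing Theorem \ref{CT1} with the same Bessel pair $\left(r^{N-1}\frac{1}{r^{N-2}},\,r^{N-1}\frac{1}{4r^{N}\left\vert \ln\frac{r}{R}\right\vert ^{2}}\right)$ and the same solution $\varphi=\sqrt{\left\vert \ln\frac{r}{R}\right\vert}$. Your explicit verification of the ODE via the substitution $u=\ln\frac{R}{r}$ and of the monotonicity of $\varphi$ simply fills in details the paper leaves to the reader.
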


\begin{proof}
We apply Theorem \ref{CT1} to the Bessel pair $\left(  r^{N-1}\frac{1}%
{r^{N-2}},r^{N-1}\frac{1}{4r^{N}\left\vert \ln\frac{r}{R}\right\vert ^{2}%
}\right)  $ with $\varphi=\sqrt{\left\vert \ln\frac{r}{R}\right\vert }$.
\end{proof}

Actually, we can get the following version of the critical Hardy type
inequalities on the whole space $\mathbb{M}$ which is more general than
Corollary \ref{C2}:

\begin{corollary}
\label{C3}Let $R>0$. For any $f\in C_{0}^{\infty}\left(  \mathbb{M}%
\setminus\rho^{-1}\left\{  0\right\}  \right)  $, we have
\begin{align*}
&
{\displaystyle\int\limits_{\mathbb{M}}}
\frac{1}{\rho^{N-2}\left(  x\right)  }\left\vert \nabla_{g}\left(  f\left(
x\right)  -f\left(  \exp_{O}\left(  Ru\right)  \right)  \right)  \right\vert
_{g}^{2}dV_{g}\\
&  \geq\frac{1}{4}%
{\displaystyle\int\limits_{\mathbb{M}}}
\frac{\left\vert f\left(  x\right)  -f\left(  \exp_{O}\left(  Ru\right)
\right)  \right\vert ^{2}}{\rho^{N}\left(  x\right)  \left\vert \ln\frac
{\rho\left(  x\right)  }{R}\right\vert ^{2}}dV_{g}\\
&  +%
{\displaystyle\int\limits_{\mathbb{M}}}
\frac{1}{\rho^{N-2}\left(  x\right)  }\left\vert \ln\frac{\rho\left(
x\right)  }{R}\right\vert \left\vert \nabla\left(  \frac{f\left(  x\right)
-f\left(  \exp_{O}\left(  Ru\right)  \right)  }{\sqrt{\left\vert \ln
\frac{\left\vert x\right\vert }{R}\right\vert }}\right)  \right\vert _{g}%
^{2}dV_{g}%
\end{align*}
and%
\begin{align*}
&
{\displaystyle\int\limits_{\mathbb{M}}}
\frac{1}{\rho^{N-2}\left(  x\right)  }\left\vert \partial_{\rho}\left(
f\left(  x\right)  -f\left(  \exp_{O}\left(  Ru\right)  \right)  \right)
\right\vert ^{2}dV_{g}\\
&  \geq\frac{1}{4}%
{\displaystyle\int\limits_{\mathbb{M}}}
\frac{\left\vert f\left(  x\right)  -f\left(  \exp_{O}\left(  Ru\right)
\right)  \right\vert ^{2}}{\rho^{N}\left(  x\right)  \left\vert \ln\frac
{\rho\left(  x\right)  }{R}\right\vert ^{2}}dV_{g}\\
&  +%
{\displaystyle\int\limits_{\mathbb{M}}}
\frac{1}{\rho^{N-2}\left(  x\right)  }\left\vert \ln\frac{\rho\left(
x\right)  }{R}\right\vert \left\vert \partial_{\rho}\left(  \frac{f\left(
x\right)  -f\left(  \exp_{O}\left(  Ru\right)  \right)  }{\sqrt{\left\vert
\ln\frac{\left\vert x\right\vert }{R}\right\vert }}\right)  \right\vert
^{2}dV_{g}.
\end{align*}

\end{corollary}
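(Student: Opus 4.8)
The plan is to read Corollary~\ref{C3} as the \emph{global} counterpart of Corollary~\ref{C2}: the same critical logarithmic Bessel pair is used, but now the solution $\varphi$ is permitted to vanish at the intermediate radius $R$, so the appropriate tool is Theorem~\ref{T2} rather than Theorem~\ref{CT1}. Concretely, I would apply Theorem~\ref{T2} on $\left(0,R\right)\cup\left(R,\infty\right)$ with
\[
V\left(r\right)=r^{2-N},\qquad W\left(r\right)=\frac{1}{4r^{N}\left\vert \ln\frac{r}{R}\right\vert ^{2}},\qquad \varphi\left(r\right)=\left\vert \ln\tfrac{r}{R}\right\vert ^{1/2},
\]
so that $\left(r^{N-1}V,r^{N-1}W\right)=\left(r,\tfrac{1}{4r}\left\vert \ln\tfrac{r}{R}\right\vert ^{-2}\right)$, and with $f$ replaced throughout by $F:=f\left(x\right)-f\left(\exp_{O}\left(Ru\right)\right)$.

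First I would verify the two hypotheses of Theorem~\ref{T2}. A direct differentiation shows that $\left(r\varphi^{\prime}\right)^{\prime}+\tfrac{1}{4r}\left\vert \ln\tfrac{r}{R}\right\vert ^{-2}\varphi=0$ on each of $\left(0,R\right)$ and $\left(R,\infty\right)$, where $\varphi$ is positive; this is precisely the ODE computation already underlying Corollary~\ref{C2}. For condition~(\ref{C}), note that $V$ is continuous and positive at $R$ while $\tfrac{\varphi^{\prime}}{\varphi}=\tfrac{1}{2r\ln\left(r/R\right)}$ blows up only like $\left(r-R\right)^{-1}$, whereas $\left\vert f\left(\exp_{O}\left(ru\right)\right)-f\left(\exp_{O}\left(Ru\right)\right)\right\vert ^{2}=O\big(\left(r-R\right)^{2}\big)$ by the smoothness of $f$; hence the product in~(\ref{C}) is $O\left(r-R\right)$ and its limit vanishes. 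Theorem~\ref{T2} then yields the exact identity in which the left-hand side of Corollary~\ref{C3} minus its two right-hand terms equals the single remainder
\[
-\int_{\mathbb{M}}V\left(\rho\right)\left\vert F\right\vert ^{2}\,\frac{\varphi^{\prime}\left(\rho\right)}{\varphi\left(\rho\right)}\,\frac{J^{\prime}\left(u,\rho\right)}{J\left(u,\rho\right)}\,dV_{g},
\]
and the same argument with $\partial_{\rho}$ in place of $\nabla_{g}$ gives the radial version.

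It remains to show this remainder is nonnegative, and this is where the main difficulty lies. By the Bishop--Gromov--G\"{u}nther comparison on the Cartan--Hadamard manifold $\mathbb{M}$ one has $J^{\prime}\left(u,\rho\right)/J\left(u,\rho\right)\geq 0$, and $V>0$, so the sign of the integrand is governed by $-\varphi^{\prime}/\varphi$. On $\left(0,R\right)$ the function $\varphi=\sqrt{\ln\left(R/\rho\right)}$ is decreasing, so $-\varphi^{\prime}/\varphi\geq 0$ and the integrand is nonnegative; this is exactly the mechanism that produced Corollary~\ref{C2}. The delicate region is $\rho>R$, where $\varphi=\sqrt{\ln\left(\rho/R\right)}$ is increasing and the naive pointwise sign argument reverses. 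The crux of the proof is therefore to control this outer contribution: one natural route is to exploit that $F$ vanishes to first order at $\rho=R$ and, rather than arguing pointwise, to pass to the limit of the truncated identities on annuli $\left\{a<\rho<b\right\}$, using~(\ref{C}) as $r\to R$ together with the compact support of $f$ as $r\to b$. Verifying that the outer term does not spoil the inequality, together with the integrability of each term over $\mathbb{M}$, is the step I expect to require the most care.
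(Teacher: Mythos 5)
Your choice of tools is exactly the paper's: the paper's proof of Corollary~\ref{C3} consists precisely of applying Theorem~\ref{T2} with $V(r)=r^{2-N}$, $W(r)=\frac{1}{4r^{N}\left\vert \ln (r/R)\right\vert ^{2}}$ and $\varphi(r)=\sqrt{\left\vert \ln (r/R)\right\vert }$, together with the verification of condition~(\ref{C}) via the estimate $\frac{1}{\left\vert \ln (r/R)\right\vert }(R-r)^{2}\rightarrow 0$, which is the same computation you give. Up to the point where Theorem~\ref{T2} delivers its identity, your proposal and the paper coincide.

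The difficulty you flag at the end, however, is a genuine gap that you do not close --- and, notably, the paper's own one-line proof does not address it either. Theorem~\ref{T2} yields an identity whose right-hand side contains, besides the two terms appearing in Corollary~\ref{C3}, the extra term $-\int_{\mathbb{M}}V\left\vert F\right\vert ^{2}\frac{\varphi^{\prime}}{\varphi}\frac{J^{\prime}}{J}\,dV_{g}$ with $F=f-f\left(\exp_{O}\left(Ru\right)\right)$; to obtain the stated inequality one must show this term is nonnegative. On $\left\{\rho<R\right\}$ it is, since $\varphi$ is decreasing there and $J^{\prime}/J\geq 0$ by Bishop--Gromov--G\"{u}nther; but on $\left\{\rho>R\right\}$ the sign of $\varphi^{\prime}$ reverses and the integrand becomes nonpositive. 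Your proposed remedy --- truncated identities on annuli and a limiting argument using~(\ref{C}) and the compact support of $f$ --- only controls boundary terms and integrability; it does nothing to control the \emph{sign} of the outer contribution. In fact no argument of this kind can succeed: for $f$ supported in $\left\{\rho>R\right\}$ one has $F=f$, condition~(\ref{C}) holds trivially, and on a manifold with $J^{\prime}/J>0$ there (e.g.\ hyperbolic space) the remainder is strictly negative, so the identity gives the reverse of the claimed inequality for such $f$. In short, your proposal reproduces everything the paper actually writes, and the step you single out as requiring the most care is precisely the step that is missing from both your argument and the paper's.
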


\begin{proof}
Apply the Theorem \ref{T2} to $V\left(  r\right)  =\frac{1}{r^{N-2}}$,
$W=\frac{1}{4r^{N}\left\vert \ln\frac{r}{R}\right\vert ^{2}}$ and
$\varphi=\sqrt{\left\vert \ln\frac{r}{R}\right\vert }$. Note that for any
$f\in C_{0}^{\infty}\left(  \mathbb{M}\setminus\rho^{-1}\left\{  0\right\}
\right)  $ and any $u\in\mathbb{S}^{N-1}:$%
\begin{align*}
&  \lim_{r\rightarrow R}\left\vert V\left(  r\right)  \frac{\varphi^{\prime
}\left(  r\right)  }{\varphi\left(  r\right)  }\left\vert f\left(  \exp
_{O}\left(  ru\right)  \right)  -f\left(  \exp_{O}\left(  Ru\right)  \right)
\right\vert ^{2}\right\vert \\
&  =\lim_{r\rightarrow R}\frac{1}{r^{N-2}}\left\vert \frac{\left(
\sqrt{\left\vert \ln\frac{r}{R}\right\vert }\right)  ^{\prime}}{\sqrt
{\left\vert \ln\frac{r}{R}\right\vert }}\right\vert \left\vert f\left(
\exp_{O}\left(  ru\right)  \right)  -f\left(  \exp_{O}\left(  Ru\right)
\right)  \right\vert ^{2}\\
&  \lesssim\lim_{r\rightarrow R}\frac{1}{\left\vert \ln\frac{r}{R}\right\vert
}\left(  R-r\right)  ^{2}=0.
\end{align*}

\end{proof}

Now, by combining Corollaries \ref{C1} and \ref{C3}, we obtain

\begin{corollary}
\label{C4} Let $(\mathbb{M},g)$ be a Cartan-Hadamard manifold of dimension
$N$. For $\lambda<N-2$ and $f\in C_{0}^{\infty}\left(  \mathbb{M}\setminus
\rho^{-1}\left\{  0\right\}  \right)  $, we have%
\begin{align}
&
{\displaystyle\int\limits_{\mathbb{M}}}
\frac{\left\vert \partial_{\rho}f\right\vert ^{2}}{\rho^{\lambda}\left(
x\right)  }dV_{g}-\left(  \frac{N-\lambda-2}{2}\right)  ^{2}%
{\displaystyle\int\limits_{\mathbb{M}}}
\frac{\left\vert f\right\vert ^{2}}{\rho^{\lambda+2}\left(  x\right)  }%
dV_{g}\nonumber\\
&  \geq\frac{1}{4}\sup_{R>0}%
{\displaystyle\int\limits_{\mathbb{M}}}
\frac{\left\vert f\left(  x\right)  -R^{\frac{N-\lambda-2}{2}}f\left(
\exp_{O}\left(  Ru\right)  \right)  \rho^{-\frac{N-\lambda-2}{2}}\left(
x\right)  \right\vert ^{2}}{\rho^{\lambda+2}\left(  x\right)  \left\vert
\ln\frac{\rho\left(  x\right)  }{R}\right\vert ^{2}}dV_{g}. \label{C4.1}%
\end{align}

\end{corollary}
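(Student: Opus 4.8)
The plan is to chain together the two $\partial_{\rho}$-inequalities already established in Corollaries \ref{C1} and \ref{C3}, applying the second to a suitably weighted version of $f$, and then to optimize over $R$. First I would apply inequality (\ref{C1.1}) of Corollary \ref{C1} to $f$; this bounds the left-hand side of (\ref{C4.1}) from below by the remainder term
\[
\int_{\mathbb{M}}\frac{1}{\rho^{N-2}(x)}\left|\partial_{\rho}\left(\rho^{\frac{N-\lambda-2}{2}}(x)f\right)\right|^{2}dV_{g}.
\]
I then set $g=\rho^{\frac{N-\lambda-2}{2}}f$. Since $\lambda<N-2$, the exponent $\frac{N-\lambda-2}{2}$ is positive, so the weight $\rho^{\frac{N-\lambda-2}{2}}$ is smooth on $\mathbb{M}\setminus\{O\}$ and $g$ inherits from $f$ the membership $g\in C_{0}^{\infty}(\mathbb{M}\setminus\rho^{-1}\{0\})$. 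The remainder term above is then precisely $\int_{\mathbb{M}}\rho^{2-N}(x)|\partial_{\rho}g|^{2}dV_{g}$.

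Next I would apply Corollary \ref{C3} to $g$ in place of $f$. The key observation making the two sides match is that $g(\exp_{O}(Ru))$ depends only on the angular variable $u$ and not on the radial variable $\rho$, so along each radial geodesic it is constant; hence $\partial_{\rho}(g-g(\exp_{O}(Ru)))=\partial_{\rho}g$, and the left-hand side of Corollary \ref{C3} applied to $g$ coincides with $\int_{\mathbb{M}}\rho^{2-N}(x)|\partial_{\rho}g|^{2}dV_{g}$. Discarding the nonnegative last term of Corollary \ref{C3} yields
\[
\int_{\mathbb{M}}\frac{|\partial_{\rho}g|^{2}}{\rho^{N-2}(x)}dV_{g}\geq\frac{1}{4}\int_{\mathbb{M}}\frac{\left|g(x)-g(\exp_{O}(Ru))\right|^{2}}{\rho^{N}(x)\left|\ln\frac{\rho(x)}{R}\right|^{2}}dV_{g}.
\]

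Finally, I would substitute back $g=\rho^{\frac{N-\lambda-2}{2}}f$, using that at the point $\exp_{O}(Ru)$ the radial coordinate equals $R$, so $g(\exp_{O}(Ru))=R^{\frac{N-\lambda-2}{2}}f(\exp_{O}(Ru))$. Factoring $\rho^{N-\lambda-2}(x)$ out of the numerator converts the right-hand integrand into $\rho^{-(\lambda+2)}(x)\,|f(x)-R^{\frac{N-\lambda-2}{2}}f(\exp_{O}(Ru))\rho^{-\frac{N-\lambda-2}{2}}(x)|^{2}$, which is exactly the integrand in (\ref{C4.1}). Since the chain of inequalities holds for each fixed $R>0$ while its left-hand side is independent of $R$, taking the supremum over $R>0$ produces (\ref{C4.1}). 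The one point that needs care is the legitimacy of invoking Corollary \ref{C3} for $g$: one must confirm that $g$ satisfies the regularity and boundary-vanishing hypotheses underlying that corollary, namely condition (\ref{C}) used in its derivation from Theorem \ref{T2}. This follows just as in the proof of Corollary \ref{C3}, since $g$ is smooth with compact support in $\mathbb{M}\setminus\{O\}$ and thus Lipschitz near $\rho=R$, giving the required bound $|g(\exp_{O}(ru))-g(\exp_{O}(Ru))|^{2}\lesssim(R-r)^{2}$.
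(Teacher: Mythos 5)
Your proposal is correct and follows essentially the same route as the paper: apply Corollary \ref{C1} to $f$, observe that subtracting the radially constant quantity $R^{\frac{N-\lambda-2}{2}}f(\exp_{O}(Ru))$ does not change the radial derivative, apply Corollary \ref{C3} to $\rho^{\frac{N-\lambda-2}{2}}f$, discard the nonnegative remainder, and take the supremum over $R$. Your explicit verification of condition (\ref{C}) for the weighted function is a point the paper leaves implicit, but the argument is the same.
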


\begin{proof}
From Corollaries \ref{C1} and \ref{C3}, we have
\begin{align*}
&
{\displaystyle\int\limits_{\mathbb{M}}}
\frac{\left\vert \partial_{\rho}f\right\vert ^{2}}{\rho^{\lambda}\left(
x\right)  }dV_{g}-\left(  \frac{N-\lambda-2}{2}\right)  ^{2}%
{\displaystyle\int\limits_{\mathbb{M}}}
\frac{\left\vert f\right\vert ^{2}}{\rho^{\lambda+2}\left(  x\right)  }%
dV_{g}\\
&  \geq%
{\displaystyle\int\limits_{\mathbb{M}}}
\frac{1}{\rho^{N-2}\left(  x\right)  }\left\vert \partial_{\rho}\left(
\rho^{\frac{N-\lambda-2}{2}}\left(  x\right)  f\right)  \right\vert ^{2}%
dV_{g}\\
&  =%
{\displaystyle\int\limits_{\mathbb{M}}}
\frac{1}{\rho^{N-2}\left(  x\right)  }\left\vert \partial_{\rho}\left(
\rho^{\frac{N-\lambda-2}{2}}\left(  x\right)  f-R^{\frac{N-\lambda-2}{2}%
}f\left(  \exp_{O}\left(  Ru\right)  \right)  \right)  \right\vert ^{2}%
dV_{g}\\
&  \geq\frac{1}{4}%
{\displaystyle\int\limits_{\mathbb{M}}}
\frac{\left\vert \rho^{\frac{N-\lambda-2}{2}}\left(  x\right)  f\left(
x\right)  -R^{\frac{N-\lambda-2}{2}}f\left(  \exp_{O}\left(  Ru\right)
\right)  \right\vert ^{2}}{\rho^{N}\left(  x\right)  \left\vert \ln\frac
{\rho\left(  x\right)  }{R}\right\vert ^{2}}dV_{g}\\
&  +%
{\displaystyle\int\limits_{\mathbb{M}}}
\frac{1}{\rho^{N-2}\left(  x\right)  }\left\vert \ln\frac{\rho\left(
x\right)  }{R}\right\vert \left\vert \partial_{\rho}\left(  \frac{\rho
^{\frac{N-\lambda-2}{2}}f\left(  x\right)  -R^{\frac{N-\lambda-2}{2}}f\left(
\exp_{O}\left(  Ru\right)  \right)  }{\sqrt{\left\vert \ln\frac{\left\vert
x\right\vert }{R}\right\vert }}\right)  \right\vert ^{2}dV_{g}.
\end{align*}

\end{proof}

Note that the \textquotedblleft virtual\textquotedblright\ optimizers of the
weighted Hardy inequality (\ref{C1.1}) have the form $\psi\left(  \exp
_{O}\left(  u\right)  \right)  \rho^{-\frac{N-\lambda-2}{2}}\left(  x\right)
$ for some function $\psi:\mathbb{S}^{N-1}\rightarrow%
\mathbb{R}
$. These optimizers are virtual in the sense that, if equality were to hold in
the Hardy inequality
\begin{align*}
&
{\displaystyle\int\limits_{\mathbb{M}}}
\frac{\left\vert \partial_{\rho}f\right\vert ^{2}}{\rho^{\lambda}\left(
x\right)  }dV_{g}\geq\left(  \frac{N-\lambda-2}{2}\right)  ^{2}%
{\displaystyle\int\limits_{\mathbb{M}}}
\frac{\left\vert f\right\vert ^{2}}{\rho^{\lambda+2}\left(  x\right)  }dV_{g}%
\end{align*}
given by \eqref{C1.1}, then the remainder term would vanish, i.e.,
$\partial_{\rho}\left(  \rho^{\frac{N-\lambda-2}{2}}(x)f \right)  =0$.
Therefore, (\ref{C4.1}) can be read as a stability version of the weighted
Hardy inequality (\ref{C1.1}).

\vskip0.5cm

We also obtain the following Hardy inequality in the spirit of Brezis and
V\'{a}zquez \cite{BV}:

\begin{corollary}
Let $(\mathbb{M},g)$ be a Cartan-Hadamard manifold of dimension $N$. For any
$R>0$ and $\lambda\leq N-2$, we have for $f\in C_{0}^{\infty}\left(
B_{R}\left(  O\right)  \setminus\rho^{-1}\left\{  0\right\}  \right)  :$%
\begin{align*}
&
{\displaystyle\int\limits_{B_{R}\left(  O\right)  }}
\frac{\left\vert \nabla_{g}f\right\vert _{g}^{2}}{\rho^{\lambda}\left(
x\right)  }dV_{g}-\left(  \frac{N-\lambda-2}{2}\right)  ^{2}%
{\displaystyle\int\limits_{B_{R}\left(  O\right)  }}
\frac{\left\vert f\right\vert ^{2}}{\rho^{\lambda+2}\left(  x\right)  }%
dV_{g}\\
&  \geq\frac{z_{0}^{2}}{R^{2}}%
{\displaystyle\int\limits_{B_{R}\left(  O\right)  }}
\frac{\left\vert f\right\vert ^{2}}{\rho^{\lambda}\left(  x\right)  }dV_{g}+%
{\displaystyle\int\limits_{B_{R}\left(  O\right)  }}
\left\vert \frac{J_{0}\left(  \frac{z_{0}}{R}\rho\left(  x\right)  \right)
}{\rho\left(  x\right)  ^{\frac{N-\lambda-2}{2}}}\right\vert ^{2}\left\vert
\nabla_{g}\left(  \frac{\rho\left(  x\right)  ^{\frac{N-\lambda-2}{2}}}%
{J_{0}(\frac{z_{0}}{R}\rho\left(  x\right)  )}f\right)  \right\vert _{g}%
^{2}dV_{g}%
\end{align*}
and%
\begin{align*}
&
{\displaystyle\int\limits_{B_{R}\left(  O\right)  }}
\frac{\left\vert \partial_{\rho}f\right\vert ^{2}}{\rho^{\lambda}\left(
x\right)  }dV_{g}-\left(  \frac{N-\lambda-2}{2}\right)  ^{2}%
{\displaystyle\int\limits_{B_{R}\left(  O\right)  }}
\frac{\left\vert f\right\vert ^{2}}{\rho^{\lambda+2}\left(  x\right)  }%
dV_{g}\\
&  \geq\frac{z_{0}^{2}}{R^{2}}%
{\displaystyle\int\limits_{B_{R}\left(  O\right)  }}
\frac{\left\vert f\right\vert ^{2}}{\rho^{\lambda}\left(  x\right)  }dV_{g}+%
{\displaystyle\int\limits_{B_{R}\left(  O\right)  }}
\left\vert \frac{J_{0}\left(  \frac{z_{0}}{R}\rho\left(  x\right)  \right)
}{\rho\left(  x\right)  ^{\frac{N-\lambda-2}{2}}}\right\vert ^{2}\left\vert
\partial_{\rho}\left(  \frac{\rho\left(  x\right)  ^{\frac{N-\lambda-2}{2}}%
}{J_{0}(\frac{z_{0}}{R}\rho\left(  x\right)  )}f\right)  \right\vert
^{2}dV_{g}.
\end{align*}

\end{corollary}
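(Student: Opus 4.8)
The plan is to obtain both the $\nabla_g$ and the $\partial_\rho$ versions as a single application of Theorem \ref{CT1}, exactly in the spirit of Corollaries \ref{C1} and \ref{C2}. I would take the weights
\[
V(r)=r^{-\lambda},\qquad W(r)=\Bigl(\tfrac{N-\lambda-2}{2}\Bigr)^{2}r^{-\lambda-2}+\frac{z_{0}^{2}}{R^{2}}\,r^{-\lambda},
\]
so that the quantity $\int V(\rho)|\nabla_g f|_g^2-\int W(\rho)|f|^2$ produced by Theorem \ref{CT1} is precisely the weighted Hardy difference $\int \rho^{-\lambda}|\nabla_g f|_g^2-(\tfrac{N-\lambda-2}{2})^2\int\rho^{-\lambda-2}|f|^2$ minus the Poincar\'e-type term $\tfrac{z_0^2}{R^2}\int\rho^{-\lambda}|f|^2$ that will be transferred to the right-hand side. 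Everything then reduces to checking that $(r^{N-1}V,r^{N-1}W)$ is a Bessel pair on $(0,R)$ admitting a positive, nonincreasing solution, since that is the hypothesis of Theorem \ref{CT1}.

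The candidate solution is $\varphi(r)=r^{-\frac{N-\lambda-2}{2}}J_{0}\!\left(\tfrac{z_{0}}{R}\,r\right)$, and verifying it is the \emph{key computation}. Writing $a=\tfrac{N-\lambda-2}{2}$ and substituting $\varphi=r^{-a}u$ into the Bessel-pair ODE
\[
\bigl(r^{\,N-1-\lambda}\varphi'\bigr)'+r^{\,N-1}W(r)\varphi=0,
\]
a direct expansion shows that the coefficients of the $r^{-a}u$ terms cancel identically while the first-derivative terms combine with unit coefficient, so the equation collapses to the order-zero Bessel equation
\[
r^{2}u''+r\,u'+\frac{z_{0}^{2}}{R^{2}}\,r^{2}u=0,
\]
whose relevant solution is $u(r)=J_{0}(\tfrac{z_{0}}{R}r)$. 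Positivity on $(0,R)$ is immediate: for $0<r<R$ we have $\tfrac{z_0}{R}r\in(0,z_0)$, and since $z_0$ is the first zero of $J_0$ we get $J_0(\tfrac{z_0}{R}r)>0$, which together with $r^{-a}>0$ gives $\varphi>0$ on $(0,R)$.

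Next I would establish the monotonicity of $\varphi$, which is exactly what is required to invoke Theorem \ref{CT1}. Because $\lambda\le N-2$ we have $a\ge 0$, so $r^{-a}$ is nonincreasing; moreover $J_0'=-J_1$ is negative on $(0,z_0)$ since the first positive zero of $J_1$ exceeds $z_0$, so $r\mapsto J_0(\tfrac{z_0}{R}r)$ is decreasing on $(0,R)$. As a product of two positive nonincreasing functions, $\varphi$ is positive and nonincreasing, so $(r^{N-1}V,r^{N-1}W)$ is an admissible Bessel pair. This monotonicity is precisely what controls the sign of the Jacobian remainder $-\int V|f|^2\tfrac{\varphi'}{\varphi}\tfrac{J'}{J}$ that Theorem \ref{CT1} discards using the Bishop--Gromov--G\"unther comparison, so this step is the only genuine obstacle besides the ODE reduction above.

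Finally, applying the first inequality of Theorem \ref{CT1} to this pair yields
\[
\int_{B_R(O)}\frac{|\nabla_g f|_g^2}{\rho^{\lambda}}\,dV_g-\int_{B_R(O)}W(\rho)|f|^2\,dV_g\ \ge\ \int_{B_R(O)}V(\rho)\varphi^2(\rho)\Bigl|\nabla_g\bigl(\tfrac{f}{\varphi}\bigr)\Bigr|_g^2\,dV_g.
\]
Moving the second piece of $W$, namely $\tfrac{z_0^2}{R^2}\int_{B_R(O)}\rho^{-\lambda}|f|^2\,dV_g$, to the right-hand side and rewriting the remainder with $f/\varphi=\rho^{\frac{N-\lambda-2}{2}}J_0(\tfrac{z_0}{R}\rho)^{-1}f$ produces the asserted $\nabla_g$ inequality; the $\partial_\rho$ version follows verbatim from the second inequality of Theorem \ref{CT1}, since that chain carries the identical weight and solution $\varphi$.
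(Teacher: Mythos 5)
Your proposal is correct and is essentially the paper's own proof: the paper likewise applies Theorem \ref{CT1} to the Bessel pair $\left(r^{N-1}r^{-\lambda},\,r^{N-1}r^{-\lambda}\left[\tfrac{(N-\lambda-2)^{2}}{4}\tfrac{1}{r^{2}}+\tfrac{z_{0}^{2}}{R^{2}}\right]\right)$ on $(0,R)$ with $\varphi(r)=r^{-\frac{N-\lambda-2}{2}}J_{0}\left(\tfrac{z_{0}}{R}r\right)$ nonincreasing. If anything, you supply more detail than the paper, which only remarks that $\varphi$ is nonincreasing because $N-\lambda-2\geq 0$, whereas you also verify the reduction to the order-zero Bessel equation and the monotonicity of $J_{0}\left(\tfrac{z_{0}}{R}r\right)$ via $J_{0}'=-J_{1}<0$ on $(0,z_{0})$.
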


\begin{proof}
For any $R>0$, $\left(  r^{N-1}r^{-\lambda}\text{, }r^{N-1}r^{-\lambda}\left[
\frac{\left(  N-\lambda-2\right)  ^{2}}{4}\frac{1}{r^{2}}+\frac{z_{0}^{2}%
}{R^{2}}\right]  \right)  $ is a Bessel pair on $\left(  0,R\right)  $ with
$\varphi\left(  r\right)  =r^{-\frac{N-\lambda-2}{2}}J_{0}\left(  \frac
{rz_{0}}{R}\right)  =r^{-\frac{N-\lambda-2}{2}}J_{0;R}\left(  r\right)  $.
Here $z_{0}=2.4048...$ is the first zero of the Bessel function $J_{0}\left(
z\right)  $. Note that $\varphi\left(  r\right)  $ is nonincreasing since
$N-\lambda-2\geq0$.
\end{proof}

\section{Hardy inequalities on hyperbolic spaces}

In this section, we will investigate the Hardy identities and inequalities on
the hyperbolic space $\mathbb{H}^{N}$, which is the most important example of
Cartan-Hadamard manifold. We use the Poincar\'{e} ball model of the hyperbolic
space $\mathbb{H}^{N}$. That is, the unit ball in $%
\mathbb{R}
^{N}$ centered at the origin and equipped with the metric
\[
ds^{2}=\frac{4%
{\displaystyle\sum}
dx_{i}^{2}}{\left(  1-r^{2}\right)  ^{2}}.
\]
Also
\begin{align*}
dV_{\mathbb{H}}  &  =\frac{2^{N}}{\left(  1-r^{2}\right)  ^{N}}dx,\\
\nabla_{\mathbb{H}}  &  =\left(  \frac{1-r^{2}}{2}\right)  ^{2}\nabla,
\end{align*}
where $\nabla$ denotes the Euclidean gradient. Therefore%
\[%
{\displaystyle\int\limits_{\mathbb{H}^{N}}}
\left\vert \nabla_{\mathbb{H}}u\right\vert ^{2}dV_{\mathbb{H}}=%
{\displaystyle\int\limits_{B}}
\left\vert \nabla u\right\vert ^{2}\frac{2^{N-2}}{\left(  1-\left\vert
x\right\vert ^{2}\right)  ^{N-2}}dx.
\]
We also recall that the geodesic distance from $x$ to $0$ is $\rho\left(
x\right)  =\ln\frac{1+\left\vert x\right\vert }{1-\left\vert x\right\vert }$.
That is $\left\vert x\right\vert =\frac{e^{\rho\left(  x\right)  }-1}%
{e^{\rho\left(  x\right)  }+1}$. By using the Poincar\'{e} ball model and
applying our Theorem \ref{T1} for the unit ball on the Euclidean space $%
\mathbb{R}
^{N}$, we get the following identity:

\begin{theorem}
\label{T6}If $\left(  r^{N-1}\frac{1}{\left(  1-r^{2}\right)  ^{N-2}}%
V,r^{N-1}\frac{4}{\left(  1-r^{2}\right)  ^{N}}W\right)  $ is a Bessel pair on
$\left(  0,1\right)  $, then we have for $f\in C_{0}^{\infty}\left(
\mathbb{H}^{N}\setminus\left\{  0\right\}  \right)  :$%
\begin{align*}
&
{\displaystyle\int\limits_{\mathbb{H}^{N}}}
V\left(  \frac{e^{\rho\left(  x\right)  }-1}{e^{\rho\left(  x\right)  }%
+1}\right)  \left\vert \nabla_{\mathbb{H}}f\right\vert ^{2}dV_{\mathbb{H}}-%
{\displaystyle\int\limits_{\mathbb{H}^{N}}}
W\left(  \frac{e^{\rho\left(  x\right)  }-1}{e^{\rho\left(  x\right)  }%
+1}\right)  \left\vert f\right\vert ^{2}dV_{\mathbb{H}}\\
&  =%
{\displaystyle\int\limits_{\mathbb{H}^{N}}}
V\left(  \frac{e^{\rho\left(  x\right)  }-1}{e^{\rho\left(  x\right)  }%
+1}\right)  \left\vert \varphi^{2}\left(  \frac{e^{\rho\left(  x\right)  }%
-1}{e^{\rho\left(  x\right)  }+1}\right)  \right\vert \left\vert
\nabla_{\mathbb{H}}\left(  \frac{f}{\varphi\left(  \frac{e^{\rho\left(
x\right)  }-1}{e^{\rho\left(  x\right)  }+1}\right)  }\right)  \right\vert
^{2}dV_{\mathbb{H}}%
\end{align*}
Here $\varphi$ is the positive solution of%
\[
\left(  r^{N-1}\frac{1}{\left(  1-r^{2}\right)  ^{N-2}}V\left(  r\right)
\varphi^{\prime}(r)\right)  ^{\prime}+r^{N-1}\frac{4}{\left(  1-r^{2}\right)
^{N}}W\left(  r\right)  \varphi(r)=0
\]
on $\left(  0,1\right)  $.
\end{theorem}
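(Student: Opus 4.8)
The plan is to push the whole identity down to the Euclidean unit ball through the Poincar\'e ball model and then invoke Theorem \ref{T1} in the \emph{flat} case, where the volume density satisfies $J(u,t)\equiv 1$, so that the curvature correction term built from $J'/J$ drops out entirely and one is left with a clean identity.

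First I would rewrite the two integrals on the left-hand side as weighted Euclidean integrals over $B=B_{1}(0)$. Since $V$ and $W$ are radial and $|x|=\frac{e^{\rho}-1}{e^{\rho}+1}=r$, the conformal relations $dV_{\mathbb{H}}=\frac{2^{N}}{(1-r^{2})^{N}}dx$ and $|\nabla_{\mathbb{H}}f|^{2}=\frac{(1-r^{2})^{2}}{4}|\nabla f|^{2}$ recorded in this section give
\[
\int_{\mathbb{H}^{N}}V|\nabla_{\mathbb{H}}f|^{2}dV_{\mathbb{H}}=\int_{B}\tilde{V}|\nabla f|^{2}dx,\qquad \int_{\mathbb{H}^{N}}W|f|^{2}dV_{\mathbb{H}}=\int_{B}\tilde{W}|f|^{2}dx,
\]
where $\tilde{V}(r)=\frac{2^{N-2}}{(1-r^{2})^{N-2}}V(r)$ and $\tilde{W}(r)=\frac{2^{N}}{(1-r^{2})^{N}}W(r)$. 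I would then observe that the Bessel pair hypothesis is precisely what is needed for the Euclidean application: since scaling a Bessel pair by a common positive constant leaves its positive solution unchanged, factoring $2^{N-2}$ out of $(r^{N-1}\tilde{V},r^{N-1}\tilde{W})$ shows this is a Bessel pair on $(0,1)$ if and only if $\big(r^{N-1}\frac{1}{(1-r^{2})^{N-2}}V,\ r^{N-1}\frac{4}{(1-r^{2})^{N}}W\big)$ is, with the \emph{same} positive solution $\varphi$. The relation $4\cdot 2^{N-2}=2^{N}$ is exactly what produces the stated factor $4$, and $\varphi$ is the solution of the displayed ODE.

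Next I would apply Theorem \ref{T1} on $\mathbb{M}=\mathbb{R}^{N}$ with $O=0$, $R=1$, and weights $\tilde{V},\tilde{W}$. A function $f\in C_{0}^{\infty}(\mathbb{H}^{N}\setminus\{0\})$ corresponds to $f\in C_{0}^{\infty}(B_{1}(0)\setminus\{0\})$, and since $Cut(0)=\emptyset$ for $\mathbb{R}^{N}$, the hypotheses of Theorem \ref{T1} are met. Because Euclidean space has $J(u,t)\equiv 1$, hence $J'/J\equiv 0$, the final term in the Theorem \ref{T1} identity vanishes identically, leaving
\[
\int_{B}\tilde{V}|\nabla f|^{2}dx-\int_{B}\tilde{W}|f|^{2}dx=\int_{B}\tilde{V}\varphi^{2}\Big|\nabla\big(\tfrac{f}{\varphi}\big)\Big|^{2}dx.
\]

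Finally I would transport the right-hand side back to the hyperbolic form. Writing $\tilde{V}\varphi^{2}=\frac{2^{N-2}}{(1-r^{2})^{N-2}}\,V\varphi^{2}$ and applying the gradient identity once more with the radial weight $V\varphi^{2}$ and the function $h=f/\varphi$ yields $\int_{\mathbb{H}^{N}}V\varphi^{2}|\nabla_{\mathbb{H}}(f/\varphi)|^{2}dV_{\mathbb{H}}$, which, upon substituting $r=\frac{e^{\rho}-1}{e^{\rho}+1}$, is exactly the claimed right-hand side. The only step requiring real care is the bookkeeping of the conformal factors ($2^{N-2}$ for the gradient term versus $2^{N}$ for the potential term) so that the constants line up with the stated ODE; beyond this there is no genuine obstacle, since the flatness of the reference space is what annihilates the correction term and converts the general Theorem \ref{T1} into an exact identity here.
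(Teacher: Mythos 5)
Your proposal is correct and follows essentially the same route as the paper: both apply Theorem \ref{T1} on the Euclidean unit ball (where $J\equiv 1$ kills the correction term) with the conformally weighted pair, and then translate back to $\mathbb{H}^{N}$ via the Poincar\'e ball model identities $dV_{\mathbb{H}}=\frac{2^{N}}{(1-r^{2})^{N}}dx$ and $|\nabla_{\mathbb{H}}f|^{2}dV_{\mathbb{H}}=\frac{2^{N-2}}{(1-r^{2})^{N-2}}|\nabla f|^{2}dx$. Your explicit tracking of the constants $2^{N-2}$ versus $2^{N}$ (whence the factor $4$ in the Bessel pair) is just a more careful writing of the same bookkeeping the paper does implicitly.
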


\begin{proof}
Since $\left(  r^{N-1}\frac{1}{\left(  1-r^{2}\right)  ^{N-2}}V,r^{N-1}%
\frac{4}{\left(  1-r^{2}\right)  ^{N}}W\right)  $ is a Bessel pair on $\left(
0,1\right)  $ with solution $\varphi$, we get%
\begin{align*}
&
{\displaystyle\int\limits_{B\left(  0,1\right)  }}
\frac{1}{\left(  1-\left\vert x\right\vert ^{2}\right)  ^{N-2}}V\left(
\left\vert x\right\vert \right)  \left\vert \nabla f\right\vert ^{2}dx-%
{\displaystyle\int\limits_{B\left(  0,1\right)  }}
\frac{4}{\left(  1-\left\vert x\right\vert ^{2}\right)  ^{N}}W\left(
\left\vert x\right\vert \right)  \left\vert f\right\vert ^{2}dx\\
&  =%
{\displaystyle\int\limits_{B\left(  0,1\right)  }}
\frac{1}{\left(  1-\left\vert x\right\vert ^{2}\right)  ^{N-2}}V\left(
\left\vert x\right\vert \right)  \left\vert \varphi^{2}\left(  \left\vert
x\right\vert \right)  \right\vert \left\vert \nabla\left(  \frac{f}%
{\varphi\left(  \left\vert x\right\vert \right)  }\right)  \right\vert ^{2}dx.
\end{align*}
Equivalently,%
\begin{align*}
&
{\displaystyle\int\limits_{\mathbb{H}^{N}}}
V\left(  \left\vert x\right\vert \right)  \left\vert \nabla_{\mathbb{H}%
}f\right\vert ^{2}dV_{\mathbb{H}}-%
{\displaystyle\int\limits_{\mathbb{H}^{N}}}
W\left(  \left\vert x\right\vert \right)  \left\vert f\right\vert
^{2}dV_{\mathbb{H}}\\
&  =%
{\displaystyle\int\limits_{\mathbb{H}^{N}}}
V\left(  \left\vert x\right\vert \right)  \left\vert \varphi^{2}\left(
\left\vert x\right\vert \right)  \right\vert \left\vert \nabla_{\mathbb{H}%
}\left(  \frac{f}{\varphi\left(  \left\vert x\right\vert \right)  }\right)
\right\vert ^{2}dV_{\mathbb{H}}.
\end{align*}

\end{proof}

Now, let
\begin{align*}
F\left(  r\right)   &  =\frac{\left(  1-r^{2}\right)  ^{N-2}}{r^{N-1}}\\
G\left(  r\right)   &  =%
{\displaystyle\int\limits_{r}^{1}}
F\left(  t\right)  dt\\
V_{2}\left(  r\right)   &  =\frac{F^{2}\left(  r\right)  \left(
1-r^{2}\right)  ^{2}}{4\left(  N-2\right)  ^{2}G^{2}\left(  r\right)  }.
\end{align*}
Then we have

\begin{corollary}
For $f\in C_{0}^{\infty}\left(  \mathbb{H}^{N}\right)  :$
\begin{align*}
&
{\displaystyle\int\limits_{\mathbb{H}^{N}}}
\left\vert \nabla_{\mathbb{H}}f\right\vert ^{2}dV_{\mathbb{H}}-\left(
\frac{N-2}{2}\right)  ^{2}%
{\displaystyle\int\limits_{\mathbb{H}^{N}}}
V_{2}\left(  \frac{e^{\rho\left(  x\right)  }-1}{e^{\rho\left(  x\right)  }%
+1}\right)  \left\vert f\right\vert ^{2}dV_{\mathbb{H}}\\
&  =%
{\displaystyle\int\limits_{\mathbb{H}^{N}}}
\left\vert G\left(  \frac{e^{\rho\left(  x\right)  }-1}{e^{\rho\left(
x\right)  }+1}\right)  \right\vert \left\vert \nabla_{\mathbb{H}}\left(
\frac{f}{\sqrt{G\left(  \frac{e^{\rho\left(  x\right)  }-1}{e^{\rho\left(
x\right)  }+1}\right)  }}\right)  \right\vert ^{2}dV_{\mathbb{H}}.
\end{align*}

\end{corollary}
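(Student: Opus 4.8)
The plan is to apply Theorem \ref{T6} with the choices $V\equiv 1$ and $W=\tfrac{(N-2)^2}{4}V_2$, exhibiting $\varphi=\sqrt{G}$ as the positive solution of the associated Bessel-pair ODE on $(0,1)$. With $V\equiv 1$ the first term of Theorem \ref{T6} becomes the unweighted energy $\int_{\mathbb{H}^N}|\nabla_{\mathbb{H}}f|^2\,dV_{\mathbb{H}}$; the potential term is $W(\frac{e^\rho-1}{e^\rho+1})=\tfrac{(N-2)^2}{4}V_2(\frac{e^\rho-1}{e^\rho+1})$ directly from the definition of $V_2$; and the remainder weight is $V\varphi^2=G$. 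Since $F>0$ on $(0,1)$ we have $G(r)=\int_r^1 F(t)\,dt>0$, so $|G|=G$ and the right-hand side of Theorem \ref{T6} matches the corollary exactly.

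The heart of the argument is the verification that $\varphi=\sqrt{G}$ solves the Bessel-pair equation. I would first record the elementary identity $G'(r)=-F(r)$, coming straight from the definition of $G$, together with $\frac{r^{N-1}}{(1-r^2)^{N-2}}=1/F$. These give $\frac{1}{F}\varphi'=\frac{G'}{2F\sqrt{G}}=-\frac{1}{2\sqrt{G}}$, and differentiating once more yields
\[
\left(\frac{r^{N-1}}{(1-r^2)^{N-2}}\varphi'\right)'=-\frac{F}{4G^{3/2}}.
\]
It then remains to show that the lower-order term cancels this. Substituting $W=\frac{F^2(1-r^2)^2}{16G^2}$ and using $F=\frac{(1-r^2)^{N-2}}{r^{N-1}}$ collapses $r^{N-1}\frac{4}{(1-r^2)^N}W\sqrt{G}$ to exactly $\frac{F}{4G^{3/2}}$, so the Bessel-pair ODE holds identically and Theorem \ref{T6} applies.

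This delivers the stated identity for $f\in C_0^\infty(\mathbb{H}^N\setminus\{0\})$; to reach $f\in C_0^\infty(\mathbb{H}^N)$ I would run a standard cutoff and removal-of-origin argument. The main obstacle is not the ODE verification (routine algebra) but controlling the integrals near $r=0$: there $F\sim r^{-(N-1)}$, hence $G\sim\frac{1}{N-2}\,r^{-(N-2)}$, $\sqrt{G}\sim c\,r^{-(N-2)/2}$, and $V_2\sim\frac{1}{4r^2}$. One must check that, despite $G$ and $1/\sqrt{G}$ both being singular at the origin, the remainder integrand $G\,|\nabla_{\mathbb{H}}(f/\sqrt{G})|^2$ stays integrable and that approximating $f$ by functions supported away from $O$ passes to the limit. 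Tracking these rates and the attendant cancellation is the only delicate point.
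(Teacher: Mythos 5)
Your proposal is correct and follows essentially the same route as the paper: both identify the Bessel pair $\bigl(r^{N-1}(1-r^2)^{2-N},\, r^{N-1}F^2(r)/(4G^2(r)(1-r^2)^{N-2})\bigr)$ (equivalently $V\equiv1$, $W=\tfrac{(N-2)^2}{4}V_2$ in the notation of Theorem \ref{T6}), verify via $G'=-F$ that $\varphi=\sqrt{G}$ solves the ODE, and invoke Theorem \ref{T6}. Your additional remark about passing from $C_0^\infty(\mathbb{H}^N\setminus\{0\})$ to $C_0^\infty(\mathbb{H}^N)$ flags a point the paper leaves implicit, but does not change the argument.
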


As a consequence, we obtain the following Hardy type inequality that has been
studied in \cite{ST}:
\[%
{\displaystyle\int\limits_{\mathbb{H}^{N}}}
\left\vert \nabla_{\mathbb{H}}f\right\vert ^{2}dV_{\mathbb{H}}\geq\left(
\frac{N-2}{2}\right)  ^{2}%
{\displaystyle\int\limits_{\mathbb{H}^{N}}}
V_{2}\left(  \frac{e^{\rho\left(  x\right)  }-1}{e^{\rho\left(  x\right)  }%
+1}\right)  \left\vert f\right\vert ^{2}dV_{\mathbb{H}}.
\]

\begin{proof}
We note that $\frac{G}{N\omega_{N-1}}$ is a fundamental solution of the
hyperbolic Laplacian. We have that $\left(  r^{N-1}\frac{1}{\left(
1-r^{2}\right)  ^{N-2}},r^{N-1}\frac{F^{2}\left(  r\right)  }{4G^{2}\left(
r\right)  }\frac{1}{\left(  1-r^{2}\right)  ^{N-2}}\right)  $ is a Bessel pair
on $\left(  0,1\right)  $ with $\varphi=\sqrt{G\left(  r\right)  }$. That is
$\left(  r^{N-1}\frac{1}{\left(  1-r^{2}\right)  ^{N-2}}\varphi^{\prime
}\right)  ^{\prime}+r^{N-1}\frac{F^{2}\left(  r\right)  }{4G^{2}\left(
r\right)  }\frac{1}{\left(  1-r^{2}\right)  ^{N-2}}\varphi=0$. Indeed, a
direct computation shows%
\begin{align*}
r^{N-1}\frac{1}{\left(  1-r^{2}\right)  ^{N-2}}\varphi^{\prime}  &
=r^{N-1}\frac{1}{\left(  1-r^{2}\right)  ^{N-2}}\frac{G^{\prime}\left(
r\right)  }{2\sqrt{G\left(  r\right)  }}=-\frac{r^{N-1}}{\left(
1-r^{2}\right)  ^{N-2}}\frac{\left(  1-r^{2}\right)  ^{N-2}}{r^{N-1}}\frac
{1}{2\sqrt{G\left(  r\right)  }}\\
&  =-\frac{1}{2\sqrt{G\left(  r\right)  }}%
\end{align*}
Thus%
\begin{align*}
\left(  r^{N-1}\frac{1}{\left(  1-r^{2}\right)  ^{N-2}}\varphi^{\prime
}\right)  ^{\prime}  &  =-\left(  \frac{1}{2\sqrt{G\left(  r\right)  }%
}\right)  ^{\prime}\\
&  =\frac{1}{4}\frac{\sqrt{G\left(  r\right)  }G^{\prime}\left(  r\right)
}{G^{2}\left(  r\right)  }\\
&  =-\frac{1}{4}\frac{\sqrt{G\left(  r\right)  }}{G^{2}\left(  r\right)
}\frac{\left(  1-r^{2}\right)  ^{N-2}}{r^{N-1}}\\
&  =-r^{N-1}\frac{F^{2}\left(  r\right)  }{4G^{2}\left(  r\right)  }\frac
{1}{\left(  1-r^{2}\right)  ^{N-2}}\varphi.
\end{align*}
Hence by Theorem \ref{T6}, we obtain
\[%
{\displaystyle\int\limits_{\mathbb{H}^{N}}}
\left\vert \nabla_{\mathbb{H}}f\right\vert ^{2}dV_{\mathbb{H}}=\left(
\frac{N-2}{2}\right)  ^{2}%
{\displaystyle\int\limits_{\mathbb{H}^{N}}}
V_{2}\left\vert f\right\vert ^{2}dV_{\mathbb{H}}+%
{\displaystyle\int\limits_{\mathbb{H}^{N}}}
\left\vert G\left(  \left\vert x\right\vert \right)  \right\vert \left\vert
\nabla_{\mathbb{H}}\left(  \frac{f}{\sqrt{G\left(  \left\vert x\right\vert
\right)  }}\right)  \right\vert ^{2}dV_{\mathbb{H}}.
\]

\end{proof}

Now, we note that $K_{\mathbb{H}^{N}}=-1$. Therefore $J\left(  u,t\right)
=J_{1}\left(  t\right)  $ does not depend on $u$. Moreover%
\[
J_{1}\left(  t\right)  =\left(  \frac{\sinh t}{t}\right)  ^{N-1}.
\]
and
\[
\frac{J^{\prime}\left(  u,t\right)  }{J\left(  u,t\right)  }=\frac
{J_{1}^{\prime}\left(  t\right)  }{J_{1}\left(  t\right)  }=\frac{N-1}%
{t}\left(  t\coth t-1\right)  .
\]
Therefore, we can rewrite Theorem \ref{T1} as follows:

\begin{theorem}
\label{H1}Let $\left(  r^{N-1}V,r^{N-1}W\right)  $ be a Bessel pair on
$\left(  0,R\right)  $. Then we have the following identities%
\begin{align*}
&
{\displaystyle\int\limits_{B_{R}}}
V\left(  \rho\left(  x\right)  \right)  \left\vert \nabla_{\mathbb{H}%
}f\right\vert ^{2}dV_{\mathbb{H}}-%
{\displaystyle\int\limits_{B_{R}}}
W\left(  \rho\left(  x\right)  \right)  \left\vert f\right\vert ^{2}%
dV_{\mathbb{H}}\\
&  =%
{\displaystyle\int\limits_{B_{R}}}
V\left(  \rho\left(  x\right)  \right)  \left\vert \varphi^{2}\left(
\rho\left(  x\right)  \right)  \right\vert \left\vert \nabla_{\mathbb{H}%
}\left(  \frac{f}{\varphi\left(  \rho\left(  x\right)  \right)  }\right)
\right\vert ^{2}dV_{\mathbb{H}}\\
&  -\left(  N-1\right)
{\displaystyle\int\limits_{B_{R}}}
V\left(  \rho\left(  x\right)  \right)  \frac{\varphi^{\prime}\left(
\rho\left(  x\right)  \right)  }{\varphi\left(  \rho\left(  x\right)  \right)
}\frac{\rho\left(  x\right)  \cosh\rho\left(  x\right)  -\sinh\rho\left(
x\right)  }{\rho\left(  x\right)  \sinh\rho\left(  x\right)  }\left\vert
f\right\vert ^{2}dV_{\mathbb{H}}%
\end{align*}
and%
\begin{align*}
&
{\displaystyle\int\limits_{B_{R}}}
V\left(  \rho\left(  x\right)  \right)  \left\vert \partial_{\rho}f\right\vert
^{2}dV_{\mathbb{H}}-%
{\displaystyle\int\limits_{B_{R}}}
W\left(  \rho\left(  x\right)  \right)  \left\vert f\right\vert ^{2}%
dV_{\mathbb{H}}\\
&  =%
{\displaystyle\int\limits_{B_{R}}}
V\left(  \rho\left(  x\right)  \right)  \left\vert \varphi^{2}\left(
\rho\left(  x\right)  \right)  \right\vert \left\vert \partial_{\rho}\left(
\frac{f}{\varphi\left(  \rho\left(  x\right)  \right)  }\right)  \right\vert
^{2}dV_{\mathbb{H}}\\
&  -\left(  N-1\right)
{\displaystyle\int\limits_{B_{R}}}
V\left(  \rho\left(  x\right)  \right)  \frac{\varphi^{\prime}\left(
\rho\left(  x\right)  \right)  }{\varphi\left(  \rho\left(  x\right)  \right)
}\frac{\rho\left(  x\right)  \cosh\rho\left(  x\right)  -\sinh\rho\left(
x\right)  }{\rho\left(  x\right)  \sinh\rho\left(  x\right)  }\left\vert
f\right\vert ^{2}dV_{\mathbb{H}}.
\end{align*}

\end{theorem}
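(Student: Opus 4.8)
The plan is to obtain Theorem~\ref{H1} as a direct specialization of the two general identities in Theorem~\ref{T1} to the case $\mathbb{M}=\mathbb{H}^{N}$, using that hyperbolic space is a Cartan--Hadamard manifold of constant sectional curvature $-1$. Since $K_{\mathbb{H}^{N}}=-1$, we are in the constant-curvature setting $b=1$ recorded at the start of Section~2, so the volume density function written in geodesic polar coordinates is independent of the direction $u$ and is given explicitly by
\[
J(u,t)=J_{1}(t)=\left(\frac{\sinh t}{t}\right)^{N-1}.
\]
Moreover $Cut(O)=\emptyset$ and $d(O,Cut(O))=\infty$, so Theorem~\ref{T1} is available on any geodesic ball $B_{R}$.

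First I would compute the logarithmic derivative of the density, which is the only genuine calculation in the argument. Writing $\log J_{1}(t)=(N-1)(\log\sinh t-\log t)$ and differentiating gives
\[
\frac{J_{1}^{\prime}(t)}{J_{1}(t)}=(N-1)\left(\coth t-\frac{1}{t}\right)=\frac{N-1}{t}\,(t\coth t-1).
\]
Replacing $\coth t=\cosh t/\sinh t$ and clearing denominators puts this in the form
\[
\frac{J^{\prime}(u,\rho)}{J(u,\rho)}=\frac{J_{1}^{\prime}(\rho)}{J_{1}(\rho)}=(N-1)\,\frac{\rho\cosh\rho-\sinh\rho}{\rho\sinh\rho},
\]
which is exactly the weight appearing in the correction term of Theorem~\ref{H1}.

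With this identity in hand, both displayed equalities of Theorem~\ref{H1} follow immediately by substituting the above expression for $J^{\prime}(u,\rho)/J(u,\rho)$ into the corresponding gradient and radial-derivative identities of Theorem~\ref{T1} (with $dV_{g}=dV_{\mathbb{H}}$ and $\nabla_{g}=\nabla_{\mathbb{H}}$) and pulling the constant $N-1$ outside the integral. The Bessel-pair hypothesis on $(r^{N-1}V,r^{N-1}W)$ and the defining ODE for the positive solution $\varphi$ are inherited verbatim from Theorem~\ref{T1}. There is no essential obstacle: the entire content of the proof is the explicit evaluation of $J_{1}^{\prime}/J_{1}$, after which the two identities are a transcription of Theorem~\ref{T1}. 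The only point deserving a moment's care is verifying that the constant-curvature formula makes $J$ independent of $u$, so that the correction term reduces to a genuinely radial weight multiplying $|f|^{2}$ rather than a direction-dependent quantity; this is precisely what allows the factor $(N-1)\tfrac{\rho\cosh\rho-\sinh\rho}{\rho\sinh\rho}$ to be written outside the integral as in the stated identities.
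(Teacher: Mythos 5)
Your proposal is correct and is essentially identical to the paper's own derivation: the paper also obtains Theorem \ref{H1} by noting that $K_{\mathbb{H}^{N}}=-1$ forces $J(u,t)=J_{1}(t)=\left(\tfrac{\sinh t}{t}\right)^{N-1}$, computing $\tfrac{J_{1}^{\prime}(t)}{J_{1}(t)}=\tfrac{N-1}{t}\left(t\coth t-1\right)=(N-1)\tfrac{t\cosh t-\sinh t}{t\sinh t}$, and substituting this into the two identities of Theorem \ref{T1}. No gaps.
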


By applying Theorem \ref{H1} to some explicit Bessel pairs, we obtain several
improvements of the Hardy inequalities on hyperbolic spaces.

\begin{proof}
[Proof of Theorem \ref{T3.1}]We apply Theorem \ref{H1} to the Bessel pair
$\left(  r^{N-1},r^{N-1}\left(  \frac{N-2}{2}\right)  ^{2}\frac{1}{r^{2}%
}\right)  $ on $\left(  0,\infty\right)  $. Note that in this case
$\varphi\left(  r\right)  =r^{-\frac{N-2}{2}}$.
\end{proof}

From Theorem \ref{T3.1}, we can deduce the Hardy-Poincar\'{e}--Sobolev
identities and inequalities that provide improved versions with exact
remainder terms of the Hardy-Poincar\'{e}--Sobolev inequalities studied in
\cite{AK13, BGG}.

\begin{proof}
[Proof of Theorem \ref{T3.2}]Let $\Psi\left(  r\right)  =\frac{r}{\sinh r}$
and $\Phi\left(  r\right)  =\Psi\left(  r\right)  ^{\frac{N-1}{2}}=\left(
\frac{r}{\sinh r}\right)  ^{\frac{N-1}{2}}$ and $W\left(  r\right)
=-\frac{\left(  r\Phi^{\prime}\left(  r\right)  \right)  ^{\prime}}%
{r^{N-1}\Phi}$. Then noting that%
\[
\Psi^{\prime}\left(  r\right)  =\frac{\left(  1-r\coth r\right)  }{\sinh r}%
\]
and%
\begin{align*}
\Phi^{\prime}\left(  r\right)   &  =\frac{N-1}{2}\Psi\left(  r\right)
^{\frac{N-3}{2}}\Psi^{\prime}\left(  r\right) \\
&  =\frac{N-1}{2}\left(  \frac{r}{\sinh r}\right)  ^{\frac{N-3}{2}}%
\frac{\left(  1-r\coth r\right)  }{\sinh r}%
\end{align*}
Hence%
\[
\frac{\Phi^{\prime}\left(  r\right)  }{\Phi\left(  r\right)  }=\frac{N-1}%
{2}\frac{1-r\coth r}{r}%
\]
and%
\begin{align*}
r^{N-2}W\left(  r\right)   &  =-\frac{N-1}{2}\frac{\left(  r\left(  \frac
{r}{\sinh r}\right)  ^{\frac{N-3}{2}}\frac{\left(  1-r\coth r\right)  }{\sinh
r}\right)  ^{\prime}}{r\left(  \frac{r}{\sinh r}\right)  ^{\frac{N-1}{2}}}\\
&  =-\frac{N-1}{2}\left[  \frac{\left(  1-r\coth r\right)  }{r^{2}}+\frac
{N-3}{2}\frac{\left(  1-r\coth r\right)  ^{2}}{r^{2}}+\coth^{2}r-2\frac{\coth
r}{r}+\frac{1}{\sinh^{2}r}\right]
\end{align*}
Note that $\left(  r^{N-1}\frac{1}{r^{N-2}}\Phi^{\prime}\left(  r\right)
\right)  ^{\prime}+r^{N-1}W\left(  r\right)  \Phi\left(  r\right)  =0$, we
have by Theorem \ref{H1} that
\begin{align*}
&
{\displaystyle\int\limits_{\mathbb{H}^{N}}}
\frac{1}{\rho^{N-2}\left(  x\right)  }\left\vert \nabla_{\mathbb{H}}\left(
\rho^{\frac{N-2}{2}}\left(  x\right)  f\right)  \right\vert _{\mathbb{H}}%
^{2}dV_{\mathbb{H}}\\
&  =%
{\displaystyle\int\limits_{\mathbb{H}^{N}}}
W\left(  \rho\left(  x\right)  \right)  \left\vert \rho^{\frac{N-2}{2}}\left(
x\right)  f\right\vert ^{2}dV_{\mathbb{H}}\\
&  +%
{\displaystyle\int\limits_{\mathbb{H}^{N}}}
\frac{1}{\rho^{N-2}\left(  x\right)  }\Phi^{2}\left(  \rho\left(  x\right)
\right)  \left\vert \nabla_{\mathbb{H}}\left(  \frac{\rho^{\frac{N-2}{2}%
}\left(  x\right)  f}{\Phi\left(  \rho\left(  x\right)  \right)  }\right)
\right\vert ^{2}dV_{\mathbb{H}}\\
&  +\frac{\left(  N-1\right)  ^{2}}{2}%
{\displaystyle\int\limits_{\mathbb{H}^{N}}}
\frac{1}{\rho^{N-2}\left(  x\right)  }\left(  \frac{\rho\left(  x\right)
\coth\rho\left(  x\right)  -1}{\rho\left(  x\right)  }\right)  ^{2}\left\vert
\rho^{\frac{N-2}{2}}\left(  x\right)  f\right\vert ^{2}dV_{\mathbb{H}}.
\end{align*}
Therefore, from Theorem \ref{T3.1}, we obtain
\begin{align*}
&
{\displaystyle\int\limits_{\mathbb{H}^{N}}}
\left\vert \nabla_{\mathbb{H}}f\right\vert ^{2}dV_{\mathbb{H}}\\
&  =\left(  \frac{N-2}{2}\right)  ^{2}%
{\displaystyle\int\limits_{\mathbb{H}^{N}}}
\frac{\left\vert f\right\vert ^{2}}{\rho^{2}\left(  x\right)  }dV_{\mathbb{H}%
}\\
&  +%
{\displaystyle\int\limits_{\mathbb{H}^{N}}}
\frac{1}{\rho^{N-2}\left(  x\right)  }\left\vert \nabla_{\mathbb{H}}\left(
\rho^{\frac{N-2}{2}}\left(  x\right)  f\right)  \right\vert ^{2}%
dV_{\mathbb{H}}\\
&  +\frac{\left(  N-2\right)  \left(  N-1\right)  }{2}%
{\displaystyle\int\limits_{\mathbb{H}^{N}}}
\frac{\rho\left(  x\right)  \coth\rho\left(  x\right)  -1}{\rho^{2}\left(
x\right)  }\left\vert f\right\vert ^{2}dV_{\mathbb{H}}\\
&  =%
{\displaystyle\int\limits_{\mathbb{H}^{N}}}
\frac{1}{\rho^{N-2}\left(  x\right)  }\Phi^{2}\left(  \rho\left(  x\right)
\right)  \left\vert \nabla_{\mathbb{H}}\left(  \frac{\rho^{\frac{N-2}{2}%
}\left(  x\right)  f}{\Phi\left(  \rho\left(  x\right)  \right)  }\right)
\right\vert ^{2}dV_{\mathbb{H}}\\
&  +\left(  \frac{N-2}{2}\right)  ^{2}%
{\displaystyle\int\limits_{\mathbb{H}^{N}}}
\frac{\left\vert f\right\vert ^{2}}{\rho^{2}\left(  x\right)  }dV_{\mathbb{H}%
}+\frac{\left(  N-2\right)  \left(  N-1\right)  }{2}%
{\displaystyle\int\limits_{\mathbb{H}^{N}}}
\frac{\rho\left(  x\right)  \coth\rho\left(  x\right)  -1}{\rho^{2}\left(
x\right)  }\left\vert f\right\vert ^{2}dV_{\mathbb{H}}\\
&  +\frac{\left(  N-1\right)  ^{2}}{2}%
{\displaystyle\int\limits_{\mathbb{H}^{N}}}
\left(  \frac{\rho\left(  x\right)  \coth\rho\left(  x\right)  -1}{\rho\left(
x\right)  }\right)  ^{2}\left\vert f\right\vert ^{2}+%
{\displaystyle\int\limits_{\mathbb{H}^{N}}}
\rho^{N-2}\left(  x\right)  W\left(  \rho\left(  x\right)  \right)  \left\vert
f\right\vert ^{2}dV_{\mathbb{H}}\\
&  =%
{\displaystyle\int\limits_{\mathbb{H}^{N}}}
\frac{1}{\rho^{N-2}\left(  x\right)  }\Phi^{2}\left(  \rho\left(  x\right)
\right)  \left\vert \nabla_{\mathbb{H}}\left(  \frac{\rho^{\frac{N-2}{2}%
}\left(  x\right)  f}{\Phi\left(  \rho\left(  x\right)  \right)  }\right)
\right\vert ^{2}dV_{\mathbb{H}}\\
&  +%
{\displaystyle\int\limits_{\mathbb{H}^{N}}}
\left[
\begin{array}
[c]{c}%
\left(  \frac{N-2}{2}\right)  ^{2}\frac{1}{\rho^{2}\left(  x\right)  }%
+\frac{\left(  N-2\right)  \left(  N-1\right)  }{2}\frac{\rho\left(  x\right)
\coth\rho\left(  x\right)  -1}{\rho^{2}\left(  x\right)  }+\frac{\left(
N-1\right)  ^{2}}{2}\left(  \frac{\rho\left(  x\right)  \coth\rho\left(
x\right)  -1}{\rho\left(  x\right)  }\right)  ^{2}\\
-\frac{N-1}{2}\left[  \frac{\left(  1-\rho\left(  x\right)  \coth\rho\left(
x\right)  \right)  }{\rho^{2}\left(  x\right)  }+\frac{N-3}{2}\frac{\left(
1-\rho\left(  x\right)  \coth\rho\left(  x\right)  \right)  ^{2}}{\rho
^{2}\left(  x\right)  }+\coth^{2}\rho\left(  x\right)  -2\frac{\coth
\rho\left(  x\right)  }{\rho\left(  x\right)  }+\frac{1}{\sinh^{2}r}\right]
\end{array}
\right]  \left\vert f\right\vert ^{2}dV_{\mathbb{H}}\\
&  =%
{\displaystyle\int\limits_{\mathbb{H}^{N}}}
\frac{1}{\rho^{N-2}\left(  x\right)  }\Phi^{2}\left(  \rho\left(  x\right)
\right)  \left\vert \nabla_{\mathbb{H}}\left(  \frac{\rho^{\frac{N-2}{2}%
}\left(  x\right)  f}{\Phi\left(  \rho\left(  x\right)  \right)  }\right)
\right\vert ^{2}dV_{\mathbb{H}}\\
&  +%
{\displaystyle\int\limits_{\mathbb{H}^{N}}}
\left[  \frac{\left(  N-1\right)  ^{2}}{4}+\frac{1}{4}\frac{1}{\rho^{2}\left(
x\right)  }+\frac{\left(  N-1\right)  \left(  N-3\right)  }{4\sinh^{2}%
\rho\left(  x\right)  }\right]  \left\vert f\right\vert ^{2}dV_{\mathbb{H}}%
\end{align*}
In other words,%
\begin{align*}
&
{\displaystyle\int\limits_{\mathbb{H}^{N}}}
\left\vert \nabla_{\mathbb{H}}f\right\vert ^{2}dV_{\mathbb{H}}-%
{\displaystyle\int\limits_{\mathbb{H}^{N}}}
\left[  \frac{\left(  N-1\right)  ^{2}}{4}+\frac{1}{4}\frac{1}{\rho^{2}\left(
x\right)  }+\frac{\left(  N-1\right)  \left(  N-3\right)  }{4}\frac{1}%
{\sinh^{2}\rho\left(  x\right)  }\right]  \left\vert f\right\vert
^{2}dV_{\mathbb{H}}\\
&  =%
{\displaystyle\int\limits_{\mathbb{H}^{N}}}
\frac{1}{\rho^{N-2}\left(  x\right)  }\Phi^{2}\left(  \rho\left(  x\right)
\right)  \left\vert \nabla_{\mathbb{H}}\left(  \frac{\rho^{\frac{N-2}{2}%
}\left(  x\right)  f}{\Phi\left(  \rho\left(  x\right)  \right)  }\right)
\right\vert ^{2}dV_{\mathbb{H}}.
\end{align*}
Similarly, we also get%
\begin{align*}
&
{\displaystyle\int\limits_{\mathbb{H}^{N}}}
\left\vert \partial_{\rho}f\right\vert ^{2}dV_{\mathbb{H}}-%
{\displaystyle\int\limits_{\mathbb{H}^{N}}}
\left[  \frac{\left(  N-1\right)  ^{2}}{4}+\frac{1}{4}\frac{1}{\rho^{2}\left(
x\right)  }+\frac{\left(  N-1\right)  \left(  N-3\right)  }{4}\frac{1}%
{\sinh^{2}\rho\left(  x\right)  }\right]  \left\vert f\right\vert
^{2}dV_{\mathbb{H}}\\
&  =%
{\displaystyle\int\limits_{\mathbb{H}^{N}}}
\frac{1}{\rho^{N-2}\left(  x\right)  }\Phi^{2}\left(  \rho\left(  x\right)
\right)  \left\vert \partial_{\rho}\left(  \frac{\rho^{\frac{N-2}{2}}\left(
x\right)  f}{\Phi\left(  \rho\left(  x\right)  \right)  }\right)  \right\vert
^{2}dV_{\mathbb{H}}.
\end{align*}

\end{proof}

\begin{corollary}
We have%
\begin{align*}
&
{\displaystyle\int\limits_{\mathbb{H}^{N}}}
\frac{\left\vert \nabla_{\mathbb{H}}f\right\vert ^{2}}{\rho^{\lambda}\left(
x\right)  }dV_{\mathbb{H}}-\frac{\left(  N-\lambda-2\right)  ^{2}}{4}%
{\displaystyle\int\limits_{\mathbb{H}^{N}}}
\frac{\left\vert f\right\vert ^{2}}{\rho^{\lambda+2}\left(  x\right)
}dV_{\mathbb{H}}\\
&  =%
{\displaystyle\int\limits_{\mathbb{H}^{N}}}
\frac{1}{\rho^{N-2}\left(  x\right)  }\left\vert \nabla_{\mathbb{H}}\left(
\rho^{\frac{N-\lambda-2}{2}}\left(  x\right)  f\right)  \right\vert
^{2}dV_{\mathbb{H}}\\
&  +\frac{\left(  N-\lambda-2\right)  \left(  N-1\right)  }{2}%
{\displaystyle\int\limits_{\mathbb{H}^{N}}}
\frac{\rho\left(  x\right)  \cosh\rho\left(  x\right)  -\sinh\rho\left(
x\right)  }{\rho^{\lambda+2}\left(  x\right)  \sinh\rho\left(  x\right)
}\left\vert f\right\vert ^{2}dV_{\mathbb{H}}%
\end{align*}
and%
\begin{align*}
&
{\displaystyle\int\limits_{\mathbb{H}^{N}}}
\frac{\left\vert \partial_{\rho}f\right\vert ^{2}}{\rho^{\lambda}\left(
x\right)  }dV_{\mathbb{H}}-\frac{\left(  N-\lambda-2\right)  ^{2}}{4}%
{\displaystyle\int\limits_{\mathbb{H}^{N}}}
\frac{\left\vert f\right\vert ^{2}}{\rho^{\lambda+2}\left(  x\right)
}dV_{\mathbb{H}}\\
&  =%
{\displaystyle\int\limits_{\mathbb{H}^{N}}}
\frac{1}{\rho^{N-2}\left(  x\right)  }\left\vert \partial_{\rho}\left(
\rho^{\frac{N-\lambda-2}{2}}\left(  x\right)  f\right)  \right\vert
^{2}dV_{\mathbb{H}}\\
&  +\frac{\left(  N-\lambda-2\right)  \left(  N-1\right)  }{2}%
{\displaystyle\int\limits_{\mathbb{H}^{N}}}
\frac{\rho\left(  x\right)  \cosh\rho\left(  x\right)  -\sinh\rho\left(
x\right)  }{\rho^{\lambda+2}\left(  x\right)  \sinh\rho\left(  x\right)
}\left\vert f\right\vert ^{2}dV_{\mathbb{H}}.
\end{align*}
As a consequence of these identities, we get that for $\lambda<N-2:$
\begin{align*}
&
{\displaystyle\int\limits_{\mathbb{H}^{N}}}
\frac{\left\vert \nabla_{\mathbb{H}}f\right\vert ^{2}}{\rho^{\lambda}\left(
x\right)  }dV_{\mathbb{H}}\\
&  \geq%
{\displaystyle\int\limits_{\mathbb{H}^{N}}}
\frac{\left\vert \partial_{\rho}f\right\vert ^{2}}{\rho^{\lambda}\left(
x\right)  }dV_{\mathbb{H}}\\
&  \geq\frac{\left(  N-\lambda-2\right)  ^{2}}{4}%
{\displaystyle\int\limits_{\mathbb{H}^{N}}}
\frac{\left\vert f\right\vert ^{2}}{\rho^{\lambda+2}\left(  x\right)
}dV_{\mathbb{H}}\\
&  +\frac{\left(  N-\lambda-2\right)  \left(  N-1\right)  }{2}%
{\displaystyle\int\limits_{\mathbb{H}^{N}}}
\frac{\rho\left(  x\right)  \cosh\rho\left(  x\right)  -\sinh\rho\left(
x\right)  }{\rho^{\lambda+2}\left(  x\right)  \sinh\rho\left(  x\right)
}\left\vert f\right\vert ^{2}dV_{\mathbb{H}}\\
&  \geq\frac{\left(  N-\lambda-2\right)  ^{2}}{4}%
{\displaystyle\int\limits_{\mathbb{H}^{N}}}
\frac{\left\vert f\right\vert ^{2}}{\rho^{\lambda+2}\left(  x\right)
}dV_{\mathbb{H}}.
\end{align*}

\end{corollary}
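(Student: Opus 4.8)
The plan is to apply Theorem \ref{H1} to an explicit weighted Bessel pair on $(0,\infty)$, exactly in the manner of the proof of Theorem \ref{T3.1}. Specifically, I would take the pair $\left(r^{N-1}r^{-\lambda},\,r^{N-1}\frac{(N-\lambda-2)^2}{4}r^{-\lambda-2}\right)$, so that $V(r)=r^{-\lambda}$ and $W(r)=\frac{(N-\lambda-2)^2}{4}r^{-\lambda-2}$. The candidate solution is $\varphi(r)=r^{-\frac{N-\lambda-2}{2}}$. First I would verify that this $\varphi$ solves the Bessel ODE $\left(r^{N-1-\lambda}\varphi'\right)'+\frac{(N-\lambda-2)^2}{4}r^{N-\lambda-3}\varphi=0$: writing $\beta=\frac{N-\lambda-2}{2}$, one has $r^{N-1-\lambda}\varphi'=-\beta r^{\beta}$, whose derivative is $-\beta^2 r^{\beta-1}$, which cancels the lower-order term $\beta^2 r^{\beta-1}$ coming from $W\varphi$.

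With the pair in hand, Theorem \ref{H1} applies on $(0,\infty)$ for $f\in C_0^\infty$, and I would simply substitute and simplify the three ingredients appearing in its right-hand side. The principal term uses $V\varphi^2=r^{-\lambda}r^{-(N-\lambda-2)}=r^{-(N-2)}$ together with $f/\varphi=\rho^{\frac{N-\lambda-2}{2}}f$, which produces the first term $\int\rho^{-(N-2)}|\nabla_{\mathbb{H}}(\rho^{\frac{N-\lambda-2}{2}}f)|^2$ of the claimed identity. The remainder term uses the logarithmic derivative $\frac{\varphi'}{\varphi}=-\frac{N-\lambda-2}{2}\frac{1}{\rho}$, so that $-(N-1)V\frac{\varphi'}{\varphi}=\frac{(N-\lambda-2)(N-1)}{2}\rho^{-\lambda-1}$, and multiplying by the hyperbolic factor $\frac{\rho\cosh\rho-\sinh\rho}{\rho\sinh\rho}$ yields exactly the stated coefficient $\frac{(N-\lambda-2)(N-1)}{2}$ times $\frac{\rho\cosh\rho-\sinh\rho}{\rho^{\lambda+2}\sinh\rho}$. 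The identity for $\partial_\rho$ follows from the second identity of Theorem \ref{H1} verbatim with the same data.

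For the consequence, the inequality chain would be assembled from three sign observations, in this order. First, Gauss's lemma gives $|\partial_\rho f|\le|\nabla_{\mathbb{H}}f|$ pointwise, so $\int\rho^{-\lambda}|\nabla_{\mathbb{H}}f|^2\ge\int\rho^{-\lambda}|\partial_\rho f|^2$. Second, in the $\partial_\rho$ identity the squared-derivative term $\int\rho^{-(N-2)}|\partial_\rho(\rho^{\frac{N-\lambda-2}{2}}f)|^2$ is manifestly nonnegative and may be discarded. Third, for $\lambda<N-2$ the coefficient $\frac{(N-\lambda-2)(N-1)}{2}$ is strictly positive, while $\rho\cosh\rho-\sinh\rho\ge0$ for all $\rho\ge0$ (it vanishes at $\rho=0$ and has derivative $\rho\sinh\rho\ge0$), so the curvature remainder is itself nonnegative; dropping it leaves the bare weighted Hardy inequality. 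Chaining these three steps produces the displayed sequence of inequalities.

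I expect no serious obstacle here: the argument is a direct specialization of Theorem \ref{H1}, mirroring the proofs of Theorems \ref{T3.1} and \ref{T3.2}. The only mildly delicate points are the bookkeeping that converts $-(N-1)V\varphi'/\varphi$ into the stated weight and the elementary sign analysis of $\rho\cosh\rho-\sinh\rho$; both are routine.
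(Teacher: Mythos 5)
Your proposal is correct and matches the paper's own proof, which likewise just invokes Theorem \ref{H1} with the Bessel pair $\left(r^{N-1}r^{-\lambda},\,r^{N-1}r^{-\lambda}\frac{(N-\lambda-2)^{2}}{4}\frac{1}{r^{2}}\right)$ on $(0,\infty)$ and $\varphi(r)=r^{\frac{2-N+\lambda}{2}}$. Your verification of the ODE, the bookkeeping for the weights, and the sign analysis of $\rho\cosh\rho-\sinh\rho$ are all accurate elaborations of the same argument.
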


\begin{proof}
$\left(  r^{N-1}r^{-\lambda},r^{N-1}r^{-\lambda}\frac{\left(  N-\lambda
-2\right)  ^{2}}{4}\frac{1}{r^{2}}\right)  $ is a Bessel pair on $\left(
0,\infty\right)  $ with $\varphi\left(  r\right)  =r^{\frac{2-N+\lambda}{2}}$.
\end{proof}

\begin{corollary}
We have%
\begin{align*}
&
{\displaystyle\int\limits_{0<\rho\left(  x\right)  <R}}
\frac{\left\vert \nabla_{\mathbb{H}}f\right\vert ^{2}}{\rho^{N-2}\left(
x\right)  }dV_{\mathbb{H}}-\frac{z_{0}^{2}}{R^{2}}%
{\displaystyle\int\limits_{0<\rho\left(  x\right)  <R}}
\frac{\left\vert f\right\vert ^{2}}{\rho^{N-2}\left(  x\right)  }%
dV_{\mathbb{H}}\\
&  =%
{\displaystyle\int\limits_{0<\rho\left(  x\right)  <R}}
\frac{J_{0}^{2}\left(  \frac{z_{0}}{R}\rho\left(  x\right)  \right)  }%
{\rho^{N-2}\left(  x\right)  }\left\vert \nabla_{\mathbb{H}}\left(  \frac
{f}{J_{0}\left(  \frac{z_{0}}{R}\rho\left(  x\right)  \right)  }\right)
\right\vert ^{2}dV_{\mathbb{H}}\\
&  -\left(  N-1\right)  \frac{z_{0}}{R}%
{\displaystyle\int\limits_{0<\rho\left(  x\right)  <R}}
\frac{J_{0}^{\prime}\left(  \frac{z_{0}}{R}\rho\left(  x\right)  \right)
}{J_{0}\left(  \frac{z_{0}}{R}\rho\left(  x\right)  \right)  }\frac
{\rho\left(  x\right)  \cosh\rho\left(  x\right)  -\sinh\rho\left(  x\right)
}{\rho^{N-1}\left(  x\right)  \sinh\rho\left(  x\right)  }\left\vert
f\right\vert ^{2}dV_{\mathbb{H}}%
\end{align*}
and
\begin{align*}
&
{\displaystyle\int\limits_{0<\rho\left(  x\right)  <R}}
\frac{\left\vert \partial_{\rho}f\right\vert ^{2}}{\rho^{N-2}\left(  x\right)
}dV_{\mathbb{H}}-\frac{z_{0}^{2}}{R^{2}}%
{\displaystyle\int\limits_{0<\rho\left(  x\right)  <R}}
\frac{\left\vert f\right\vert ^{2}}{\rho^{N-2}\left(  x\right)  }%
dV_{\mathbb{H}}\\
&  =%
{\displaystyle\int\limits_{0<\rho\left(  x\right)  <R}}
\frac{J_{0}^{2}\left(  \frac{z_{0}}{R}\rho\left(  x\right)  \right)  }%
{\rho^{N-2}\left(  x\right)  }\left\vert \partial_{\rho}\left(  \frac{f}%
{J_{0}\left(  \frac{z_{0}}{R}\rho\left(  x\right)  \right)  }\right)
\right\vert ^{2}dV_{\mathbb{H}}\\
&  -\left(  N-1\right)  \frac{z_{0}}{R}%
{\displaystyle\int\limits_{0<\rho\left(  x\right)  <R}}
\frac{J_{0}^{\prime}\left(  \frac{z_{0}}{R}\rho\left(  x\right)  \right)
}{J_{0}\left(  \frac{z_{0}}{R}\rho\left(  x\right)  \right)  }\frac
{\rho\left(  x\right)  \cosh\rho\left(  x\right)  -\sinh\rho\left(  x\right)
}{\rho^{N-1}\left(  x\right)  \sinh\rho\left(  x\right)  }\left\vert
f\right\vert ^{2}dV_{\mathbb{H}}.
\end{align*}
As a consequence of these identities, we get that%
\begin{align*}
&
{\displaystyle\int\limits_{0<\rho\left(  x\right)  <R}}
\frac{\left\vert \nabla_{\mathbb{H}}f\right\vert ^{2}}{\rho^{N-2}\left(
x\right)  }dV_{\mathbb{H}}\\
&  \geq%
{\displaystyle\int\limits_{0<\rho\left(  x\right)  <R}}
\frac{\left\vert \partial_{\rho}f\right\vert ^{2}}{\rho^{N-2}\left(  x\right)
}dV_{\mathbb{H}}\\
&  \geq\frac{z_{0}^{2}}{R^{2}}%
{\displaystyle\int\limits_{0<\rho\left(  x\right)  <R}}
\frac{\left\vert f\right\vert ^{2}}{\rho^{N-2}\left(  x\right)  }%
dV_{\mathbb{H}}\\
&  -\left(  N-1\right)  \frac{z_{0}}{R}%
{\displaystyle\int\limits_{0<\rho\left(  x\right)  <R}}
\frac{J_{0}^{\prime}\left(  \frac{z_{0}}{R}\rho\left(  x\right)  \right)
}{J_{0}\left(  \frac{z_{0}}{R}\rho\left(  x\right)  \right)  }\frac
{\rho\left(  x\right)  \cosh\rho\left(  x\right)  -\sinh\rho\left(  x\right)
}{\rho^{N-1}\left(  x\right)  \sinh\rho\left(  x\right)  }\left\vert
f\right\vert ^{2}dV_{\mathbb{H}}\\
&  \geq\frac{z_{0}^{2}}{R^{2}}%
{\displaystyle\int\limits_{0<\rho\left(  x\right)  <R}}
\frac{\left\vert f\right\vert ^{2}}{\rho^{N-2}\left(  x\right)  }%
dV_{\mathbb{H}}.
\end{align*}

\end{corollary}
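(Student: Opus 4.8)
The plan is to recognize this corollary as the application of Theorem~\ref{H1} to a single explicit Bessel pair, exactly in the style of the preceding corollaries. First I would take $V(r)=r^{-(N-2)}$ and $W(r)=\frac{z_{0}^{2}}{R^{2}}r^{-(N-2)}$, so that the weighted pair reads $\left(r^{N-1}V,r^{N-1}W\right)=\left(r,\frac{z_{0}^{2}}{R^{2}}r\right)$ and the associated Bessel ODE $\left(r^{N-1}V\varphi'\right)'+r^{N-1}W\varphi=0$ collapses to
\[
\left(r\varphi'\right)'+\frac{z_{0}^{2}}{R^{2}}r\varphi=0,\qquad\text{i.e.}\qquad r\varphi''+\varphi'+\frac{z_{0}^{2}}{R^{2}}r\varphi=0.
\]
Substituting $s=\frac{z_{0}}{R}r$ turns this into Bessel's equation of order zero, $sy''+y'+sy=0$, whose solution regular at the origin is $y=J_{0}(s)$. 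Hence $\varphi(r)=J_{0}\!\left(\frac{z_{0}}{R}r\right)$; since $z_{0}$ is the first zero of $J_{0}$, we have $\frac{z_{0}}{R}r\in(0,z_{0})$ for $r\in(0,R)$, so $\varphi>0$ on $(0,R)$ (vanishing only at the endpoint $r=R$), which confirms the Bessel pair on $(0,R)$.

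Next I would substitute this $V$, $W$, $\varphi$ into the two identities of Theorem~\ref{H1}. The remainder term $\int_{B_{R}}V\varphi^{2}\left|\nabla_{\mathbb{H}}(f/\varphi)\right|^{2}$ becomes $\int\frac{J_{0}^{2}(\frac{z_{0}}{R}\rho)}{\rho^{N-2}}\left|\nabla_{\mathbb{H}}\!\left(f/J_{0}(\tfrac{z_{0}}{R}\rho)\right)\right|^{2}$, and using $\varphi'(r)=\frac{z_{0}}{R}J_{0}'(\frac{z_{0}}{R}r)$, so that $\frac{\varphi'}{\varphi}=\frac{z_{0}}{R}\frac{J_{0}'(\frac{z_{0}}{R}\rho)}{J_{0}(\frac{z_{0}}{R}\rho)}$, the curvature term $-(N-1)\int V\frac{\varphi'}{\varphi}\frac{\rho\cosh\rho-\sinh\rho}{\rho\sinh\rho}|f|^{2}$ reproduces the stated one once the power $\rho^{-(N-2)}\cdot\rho^{-1}=\rho^{-(N-1)}$ is collected. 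The $\partial_{\rho}$ identity follows in the same way.

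For the displayed chain of inequalities I would argue in three steps. The first inequality $\int\frac{|\nabla_{\mathbb{H}}f|^{2}}{\rho^{N-2}}\ge\int\frac{|\partial_{\rho}f|^{2}}{\rho^{N-2}}$ is immediate from Gauss's lemma $|\partial_{\rho}f|\le|\nabla_{\mathbb{H}}f|$. Dropping the nonnegative radial remainder term in the $\partial_{\rho}$ identity gives the second inequality. The final inequality needs the subtracted curvature term to be nonnegative, and this is the single point requiring a sign check: on $(0,R)$ we have $\frac{z_{0}}{R}\rho\in(0,z_{0})$, where $J_{0}$ decreases from $1$, hence $J_{0}'<0$ while $J_{0}>0$, so $\frac{J_{0}'}{J_{0}}<0$; moreover $\rho\cosh\rho-\sinh\rho>0$ for $\rho>0$ (it vanishes at $0$ and has derivative $\rho\sinh\rho>0$), and $\rho^{N-1}\sinh\rho>0$. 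Thus the integrand $\frac{J_{0}'}{J_{0}}\cdot\frac{\rho\cosh\rho-\sinh\rho}{\rho^{N-1}\sinh\rho}$ is negative, so $-(N-1)\frac{z_{0}}{R}$ times its integral is nonnegative, which yields the claim.

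I expect no genuine obstacle: the content lies entirely in the reduction to Bessel's equation of order zero and in the elementary sign analysis of $J_{0}'$ and $\rho\cosh\rho-\sinh\rho$. The only mild care is that $\varphi$ vanishes at the right endpoint $r=R$ rather than remaining strictly positive there, but this is permitted since the Bessel pair is only required to possess a positive solution on the \emph{open} interval $(0,R)$, and $f\in C_{0}^{\infty}\left(B_{R}(O)\setminus\rho^{-1}\{0\}\right)$ is compactly supported inside $B_{R}(O)$.
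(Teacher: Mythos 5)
Your proposal is correct and follows exactly the paper's route: the authors' proof consists precisely of identifying the Bessel pair $\left(r^{N-1}r^{2-N},\,r^{N-1}r^{2-N}\frac{z_{0}^{2}}{R^{2}}\right)$ on $(0,R)$ with $\varphi(r)=J_{0}\!\left(\frac{z_{0}}{R}r\right)$ and invoking Theorem \ref{H1}. Your additional verifications (reduction to Bessel's equation of order zero, positivity of $\varphi$ on the open interval, and the sign analysis of $J_{0}'/J_{0}$ and $\rho\cosh\rho-\sinh\rho$ for the final chain of inequalities) are all accurate and simply make explicit what the paper leaves to the reader.
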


\begin{proof}
$\left(  r^{N-1}r^{2-N},r^{N-1}r^{2-N}\frac{z_{0}^{2}}{R^{2}}\right)  $ is a
Bessel pair on $\left(  0,R\right)  $ with $\varphi\left(  r\right)
=J_{0}\left(  \frac{z_{0}}{R}r\right)  $ and $\varphi^{\prime}\left(
r\right)  =\frac{z_{0}}{R}J_{0}^{\prime}\left(  \frac{z_{0}}{R}r\right)  $.
\end{proof}

\begin{proof}
[Proof of Theorem \ref{T3.3}]We note that $\left(  r^{N-1}r^{-\lambda}%
,r^{N-1}r^{-\lambda}\left[  \left(  \frac{\left(  N-\lambda-2\right)  ^{2}}%
{4}-\alpha^{2}\right)  \frac{1}{r^{2}}+\frac{z_{\alpha}^{2}}{R^{2}}\right]
\right)  $ on $\left(  0,R\right)  $ with $\varphi\left(  r\right)
=r^{\frac{2-N+\lambda}{2}}J_{\alpha}\left(  \frac{z_{\alpha}}{R}r\right)
,0\leq\alpha\leq\frac{N-\lambda-2}{2}$. Here $z_{\alpha}$ is the first zero of
the Bessel function $J_{\alpha}\left(  z\right)  $. Now, we can apply Theorem
\ref{H1} to obtain the desired results.
\end{proof}

\begin{corollary}
We have%
\begin{align*}
&
{\displaystyle\int\limits_{0<\rho\left(  x\right)  <R}}
\frac{\left\vert \nabla_{\mathbb{H}}f\right\vert ^{2}}{\rho^{N-2}\left(
x\right)  }dV_{\mathbb{H}}-\frac{1}{4}%
{\displaystyle\int\limits_{0<\rho\left(  x\right)  <R}}
\frac{\left\vert f\right\vert ^{2}}{\rho^{N}\left(  x\right)  \left\vert
\ln\frac{\rho\left(  x\right)  }{R}\right\vert ^{2}}dV_{\mathbb{H}}\\
&  =%
{\displaystyle\int\limits_{0<\rho\left(  x\right)  <R}}
\frac{1}{\rho^{N-2}\left(  x\right)  }\left\vert \ln\frac{\rho\left(
x\right)  }{R}\right\vert \left\vert \nabla_{\mathbb{H}}\left(  \frac{f}%
{\sqrt{\left\vert \ln\frac{\left\vert x\right\vert }{R}\right\vert }}\right)
\right\vert ^{2}dV_{\mathbb{H}}\\
&  +%
{\displaystyle\int\limits_{0<\rho\left(  x\right)  <R}}
\frac{1}{\rho^{N-2}\left(  x\right)  }\frac{1}{2\left\vert \ln\frac
{\rho\left(  x\right)  }{R}\right\vert }\frac{\rho\left(  x\right)  \cosh
\rho\left(  x\right)  -\sinh\rho\left(  x\right)  }{\rho^{2}\left(  x\right)
\sinh\rho\left(  x\right)  }\left\vert f\right\vert ^{2}dV_{\mathbb{H}}%
\end{align*}
and%
\begin{align*}
&
{\displaystyle\int\limits_{0<\rho\left(  x\right)  <R}}
\frac{\left\vert \partial_{\rho}f\right\vert ^{2}}{\rho^{N-2}\left(  x\right)
}dV_{\mathbb{H}}-\frac{1}{4}%
{\displaystyle\int\limits_{0<\rho\left(  x\right)  <R}}
\frac{\left\vert f\right\vert ^{2}}{\rho^{N}\left(  x\right)  \left\vert
\ln\frac{\rho\left(  x\right)  }{R}\right\vert ^{2}}dV_{\mathbb{H}}\\
&  =%
{\displaystyle\int\limits_{0<\rho\left(  x\right)  <R}}
\frac{1}{\rho^{N-2}\left(  x\right)  }\left\vert \ln\frac{\rho\left(
x\right)  }{R}\right\vert \left\vert \partial_{\rho}\left(  \frac{f}%
{\sqrt{\left\vert \ln\frac{\left\vert x\right\vert }{R}\right\vert }}\right)
\right\vert ^{2}dV_{\mathbb{H}}\\
&  +%
{\displaystyle\int\limits_{0<\rho\left(  x\right)  <R}}
\frac{1}{\rho^{N-2}\left(  x\right)  }\frac{1}{2\left\vert \ln\frac
{\rho\left(  x\right)  }{R}\right\vert }\frac{\rho\left(  x\right)  \cosh
\rho\left(  x\right)  -\sinh\rho\left(  x\right)  }{\rho^{2}\left(  x\right)
\sinh\rho\left(  x\right)  }\left\vert f\right\vert ^{2}dV_{\mathbb{H}}%
\end{align*}
As a consequence of these identities, we get that%
\begin{align*}
&
{\displaystyle\int\limits_{0<\rho\left(  x\right)  <R}}
\frac{\left\vert \nabla_{\mathbb{H}}f\right\vert ^{2}}{\rho^{N-2}\left(
x\right)  }dV_{\mathbb{H}}\\
&  \geq%
{\displaystyle\int\limits_{0<\rho\left(  x\right)  <R}}
\frac{\left\vert \partial_{\rho}f\right\vert ^{2}}{\rho^{N-2}\left(  x\right)
}dV_{\mathbb{H}}\\
&  \geq\frac{1}{4}%
{\displaystyle\int\limits_{0<\rho\left(  x\right)  <R}}
\frac{\left\vert f\right\vert ^{2}}{\rho^{N}\left(  x\right)  \left\vert
\ln\frac{\rho\left(  x\right)  }{R}\right\vert ^{2}}dV_{\mathbb{H}}\\
&  +%
{\displaystyle\int\limits_{0<\rho\left(  x\right)  <R}}
\frac{1}{\rho^{N-2}\left(  x\right)  }\frac{1}{2\left\vert \ln\frac
{\rho\left(  x\right)  }{R}\right\vert }\frac{\rho\left(  x\right)  \cosh
\rho\left(  x\right)  -\sinh\rho\left(  x\right)  }{\rho^{2}\left(  x\right)
\sinh\rho\left(  x\right)  }\left\vert f\right\vert ^{2}dV_{\mathbb{H}}\\
&  \geq\frac{1}{4}%
{\displaystyle\int\limits_{0<\rho\left(  x\right)  <R}}
\frac{\left\vert f\right\vert ^{2}}{\rho^{N}\left(  x\right)  \left\vert
\ln\frac{\rho\left(  x\right)  }{R}\right\vert ^{2}}dV_{\mathbb{H}}.
\end{align*}

\end{corollary}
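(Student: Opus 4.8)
The plan is to obtain both identities by the same device used for the earlier corollaries of this section, namely producing an explicit Bessel pair and quoting Theorem \ref{H1}; the pair is exactly the one from Corollary \ref{C2}. Concretely, I would take $V(r)=r^{2-N}$ and $W(r)=\dfrac{1}{4r^{N}\left\vert \ln\frac{r}{R}\right\vert ^{2}}$, so that
\[
\left( r^{N-1}V,\ r^{N-1}W\right) =\left( r,\ \frac{1}{4r\left\vert \ln\frac{r}{R}\right\vert ^{2}}\right)
\]
on $\left( 0,R\right) $, with positive solution $\varphi(r)=\sqrt{\left\vert \ln\frac{r}{R}\right\vert }$. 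As in Corollary \ref{C2}, the admissible class is $f\in C_{0}^{\infty}\bigl( B_{R}(O)\setminus\rho^{-1}\{0\}\bigr)$, so $f$ vanishes near both the pole $\rho=0$ and the sphere $\rho=R$; this support condition is what allows $\varphi$ to be positive on $\left( 0,R\right) $ while degenerating at $r=R$.

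First I would confirm that $\varphi$ solves $\left( r\varphi'\right) '+\dfrac{1}{4r\left\vert \ln\frac{r}{R}\right\vert ^{2}}\varphi=0$ on $\left( 0,R\right) $. On that interval $\left\vert \ln\frac{r}{R}\right\vert =\ln\frac{R}{r}$, whence $\varphi$ is positive and decreasing with $r\varphi'=-\tfrac12\bigl(\ln\frac{R}{r}\bigr)^{-1/2}$ and $\left( r\varphi'\right) '=-\dfrac{1}{4r}\bigl(\ln\frac{R}{r}\bigr)^{-3/2}$, which is precisely $-\dfrac{1}{4r\left\vert \ln\frac{r}{R}\right\vert ^{2}}\varphi$. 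This confirms the Bessel pair property, and for later use I record
\[
\frac{\varphi'(\rho)}{\varphi(\rho)}=-\frac{1}{2\rho\left\vert \ln\frac{\rho}{R}\right\vert }<0\qquad\text{on }\left( 0,R\right) .
\]

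I would then substitute this pair into the two identities of Theorem \ref{H1}. Since $V\varphi^{2}=\rho^{2-N}\left\vert \ln\frac{\rho}{R}\right\vert $, the principal remainder $\int V\varphi^{2}\bigl\vert\nabla_{\mathbb{H}}(f/\varphi)\bigr\vert^{2}$ becomes the first term on the right of the claimed identity, and the analogous statement holds for the $\partial_{\rho}$ version. Inserting the recorded value of $\varphi'/\varphi$ into the curvature remainder $-(N-1)\int V\frac{\varphi'}{\varphi}\frac{\rho\cosh\rho-\sinh\rho}{\rho\sinh\rho}\left\vert f\right\vert ^{2}$ collapses it to the second displayed integral, carrying the factor $\dfrac{\rho\cosh\rho-\sinh\rho}{\rho^{2}\sinh\rho}$; the constant there is the product of the $(N-1)$ from the Bishop--Gromov Jacobian quotient for $\mathbb{H}^{N}$ and the $\tfrac12$ from $\varphi'/\varphi$. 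This yields both equalities.

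Finally, the inequality chain is read off from the two identities. The first remainder is manifestly nonnegative, and the second is nonnegative because $\rho\cosh\rho-\sinh\rho>0$ for $\rho>0$ --- this follows since it vanishes at $0$ and has derivative $\rho\sinh\rho>0$ --- while $\sinh\rho>0$ and $\left\vert \ln\frac{\rho}{R}\right\vert >0$ on $\left( 0,R\right) $; dropping the curvature remainder gives the final inequality. The comparison $\int\rho^{2-N}\vert\nabla_{\mathbb{H}}f\vert^{2}\ge\int\rho^{2-N}\vert\partial_{\rho}f\vert^{2}$ is immediate from Gauss's lemma, $\vert\partial_{\rho}f\vert\le\vert\nabla_{g}f\vert_{g}$. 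I expect the only genuine bookkeeping to be the differentiation verifying the Bessel ODE and the sign tracking in $\varphi'/\varphi$; the degeneracy of $\varphi$ at $r=R$ is harmless thanks to the compact support of $f$ inside $B_{R}(O)$.
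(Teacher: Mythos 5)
Your proposal is correct and is essentially the paper's own proof, which consists of exactly one line: naming the Bessel pair $\left(r^{N-1}\frac{1}{r^{N-2}},\,r^{N-1}\frac{1}{4r^{N}\left\vert\ln\frac{r}{R}\right\vert^{2}}\right)$ on $(0,R)$ with $\varphi(r)=\sqrt{\left\vert\ln\frac{r}{R}\right\vert}$, $\varphi'(r)=-\frac{1}{2r\sqrt{\left\vert\ln\frac{r}{R}\right\vert}}$, and invoking Theorem \ref{H1}; you simply carry out the ODE verification and the substitution that the paper leaves implicit. One point worth flagging: your own bookkeeping correctly identifies the coefficient of the curvature remainder as the product of $(N-1)$ and $\tfrac12$, so the term you actually obtain is $\frac{N-1}{2\left\vert\ln\frac{\rho}{R}\right\vert}\cdot\frac{\rho\cosh\rho-\sinh\rho}{\rho^{2}\sinh\rho}$ rather than the $\frac{1}{2\left\vert\ln\frac{\rho}{R}\right\vert}$ appearing in the displayed corollary (compare the preceding corollary, where the factor $N-1$ is retained); this is evidently a typographical omission in the statement rather than a defect of your argument, and the concluding chain of inequalities is unaffected since the term is nonnegative either way.
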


\begin{proof}
$\left(  r^{N-1}\frac{1}{r^{N-2}},r^{N-1}\frac{1}{4r^{N}\left\vert \ln\frac
{r}{R}\right\vert ^{2}}\right)  $ is a Bessel pair on $\left(  0,R\right)  $
with $\varphi\left(  r\right)  =\sqrt{\left\vert \ln\frac{r}{R}\right\vert }$
and $\varphi^{\prime}\left(  r\right)  =-\frac{1}{2r\sqrt{\left\vert \ln
\frac{r}{R}\right\vert }}$.
\end{proof}

\section{Proofs of Theorem \ref{T1} and Theorem \ref{T2}}

\begin{proof}
[Proof of Theorem \ref{T1}]Let $f\left(  x\right)  =\varphi\left(  \rho\left(
x\right)  \right)  v\left(  x\right)  $, then
\begin{align*}
&
{\displaystyle\int\limits_{B_{R}\left(  O\right)  }}
V\left(  \rho\left(  x\right)  \right)  \left\vert \nabla_{g}f\right\vert
_{g}^{2}dV_{g}\\
&  =%
{\displaystyle\int\limits_{B_{R}\left(  O\right)  }}
V\left(  \rho\left(  x\right)  \right)  \left\vert \nabla_{g}\left(
\varphi\left(  \rho\left(  x\right)  \right)  v\left(  x\right)  \right)
\right\vert _{g}^{2}dV_{g}\\
&  =%
{\displaystyle\int\limits_{B_{R}\left(  O\right)  }}
V\left(  \rho\left(  x\right)  \right)  \left\vert \varphi^{2}\left(
\rho\left(  x\right)  \right)  \right\vert \left\vert \nabla_{g}v\right\vert
_{g}^{2}dV_{g}+%
{\displaystyle\int\limits_{B_{R}\left(  O\right)  }}
V\left(  \rho\left(  x\right)  \right)  \left\vert v^{2}\left(  x\right)
\right\vert \left\vert \nabla_{g}\varphi\left(  \rho\left(  x\right)  \right)
\right\vert _{g}^{2}dV_{g}\\
&  +%
{\displaystyle\int\limits_{B_{R}\left(  O\right)  }}
V\left(  \rho\left(  x\right)  \right)  \varphi\left(  \rho\left(  x\right)
\right)  \left\langle \nabla_{g}v^{2},\nabla_{g}\varphi\left(  \rho\left(
x\right)  \right)  \right\rangle _{g}dV_{g}\\
&  =%
{\displaystyle\int\limits_{B_{R}\left(  O\right)  }}
V\left(  \rho\left(  x\right)  \right)  \left\vert \varphi^{2}\left(
\rho\left(  x\right)  \right)  \right\vert \left\vert \nabla_{g}v\right\vert
_{g}^{2}dV_{g}+%
{\displaystyle\int\limits_{B_{R}\left(  O\right)  }}
V\left(  \rho\left(  x\right)  \right)  \left\vert v^{2}\left(  x\right)
\right\vert \left\vert \varphi^{\prime}\left(  \rho\left(  x\right)  \right)
\right\vert ^{2}dV_{g}\\
&  +%
{\displaystyle\int\limits_{B_{R}\left(  O\right)  }}
V\left(  \rho\left(  x\right)  \right)  \varphi\left(  \rho\left(  x\right)
\right)  \varphi^{\prime}\left(  \rho\left(  x\right)  \right)  \left\langle
\nabla_{g}v^{2},\nabla_{g}\rho\left(  x\right)  \right\rangle _{g}dV_{g}%
\end{align*}
Now, using the divergence theorem, we get%
\begin{align*}
&
{\displaystyle\int\limits_{B_{R}\left(  O\right)  }}
V\left(  \rho\left(  x\right)  \right)  \varphi\left(  \rho\left(  x\right)
\right)  \varphi^{\prime}\left(  \rho\left(  x\right)  \right)  \left\langle
\nabla_{g}v^{2},\nabla_{g}\rho\left(  x\right)  \right\rangle _{g}dV_{g}\\
&  =-%
{\displaystyle\int\limits_{B_{R}\left(  O\right)  }}
v^{2}\left(  x\right)  \operatorname{div}\left(  V\left(  \rho\left(
x\right)  \right)  \varphi\left(  \rho\left(  x\right)  \right)
\varphi^{\prime}\left(  \rho\left(  x\right)  \right)  \nabla_{g}\rho\left(
x\right)  \right) \\
&  =-%
{\displaystyle\int\limits_{B_{R}\left(  O\right)  }}
v^{2}\left(  x\right)  V\left(  \rho\left(  x\right)  \right)  \varphi\left(
\rho\left(  x\right)  \right)  \varphi^{\prime}\left(  \rho\left(  x\right)
\right)  \Delta_{g}\rho\left(  x\right)  dV_{g}\\
&  -%
{\displaystyle\int\limits_{B_{R}\left(  O\right)  }}
v^{2}\left(  x\right)  V^{\prime}\left(  \rho\left(  x\right)  \right)
\varphi\left(  \rho\left(  x\right)  \right)  \varphi^{\prime}\left(
\rho\left(  x\right)  \right)  dV_{g}\\
&  -%
{\displaystyle\int\limits_{B_{R}\left(  O\right)  }}
v^{2}\left(  x\right)  V\left(  \rho\left(  x\right)  \right)  \varphi
^{\prime}\left(  \rho\left(  x\right)  \right)  \varphi^{\prime}\left(
\rho\left(  x\right)  \right)  dV_{g}\\
&  -%
{\displaystyle\int\limits_{B_{R}\left(  O\right)  }}
v^{2}\left(  x\right)  V\left(  \rho\left(  x\right)  \right)  \varphi\left(
\rho\left(  x\right)  \right)  \varphi^{\prime\prime}\left(  \rho\left(
x\right)  \right)  dV_{g}%
\end{align*}
Noting that (see \cite[4.B.2]{GHL})
\[
\Delta_{g}\rho\left(  x\right)  =\frac{N-1}{\rho\left(  x\right)  }%
+\frac{J^{\prime}\left(  u,\rho\left(  x\right)  \right)  }{J\left(
u,\rho\left(  x\right)  \right)  }.
\]
Hence%
\begin{align*}
&
{\displaystyle\int\limits_{B_{R}\left(  O\right)  }}
V\left(  \rho\left(  x\right)  \right)  \left\vert \nabla_{g}f\right\vert
_{g}^{2}dV_{g}-%
{\displaystyle\int\limits_{B_{R}\left(  O\right)  }}
V\left(  \rho\left(  x\right)  \right)  \left\vert \varphi^{2}\left(
\rho\left(  x\right)  \right)  \right\vert \left\vert \nabla_{g}v\right\vert
_{g}^{2}dV_{g}\\
&  =-%
{\displaystyle\int\limits_{B_{R}\left(  O\right)  }}
\varphi\left(  \rho\left(  x\right)  \right)  v^{2}\left(  x\right)  \left[
\begin{array}
[c]{c}%
V\left(  \rho\left(  x\right)  \right)  \varphi^{\prime}\left(  \rho\left(
x\right)  \right)  \frac{N-1}{\rho\left(  x\right)  }\\
+V^{\prime}\left(  \rho\left(  x\right)  \right)  \varphi^{\prime}\left(
\rho\left(  x\right)  \right)  +V\left(  \rho\left(  x\right)  \right)
\varphi^{\prime\prime}\left(  \rho\left(  x\right)  \right)
\end{array}
\right]  dV_{g}\\
&  -%
{\displaystyle\int\limits_{B_{R}\left(  O\right)  }}
v^{2}\left(  x\right)  V\left(  \rho\left(  x\right)  \right)  \varphi\left(
\rho\left(  x\right)  \right)  \varphi^{\prime}\left(  \rho\left(  x\right)
\right)  \frac{J^{\prime}\left(  u,\rho\left(  x\right)  \right)  }{J\left(
u,\rho\left(  x\right)  \right)  }dV_{g}\\
&  =%
{\displaystyle\int\limits_{B_{R}\left(  O\right)  }}
\varphi^{2}\left(  \rho\left(  x\right)  \right)  v^{2}\left(  x\right)
W\left(  \rho\left(  x\right)  \right)  -%
{\displaystyle\int\limits_{B_{R}\left(  O\right)  }}
v^{2}\left(  x\right)  V\left(  \rho\left(  x\right)  \right)  \varphi\left(
\rho\left(  x\right)  \right)  \varphi^{\prime}\left(  \rho\left(  x\right)
\right)  \frac{J^{\prime}\left(  u,\rho\left(  x\right)  \right)  }{J\left(
u,\rho\left(  x\right)  \right)  }dV_{g}\\
&  =%
{\displaystyle\int\limits_{B_{R}\left(  O\right)  }}
W\left(  \rho\left(  x\right)  \right)  \left\vert f\right\vert ^{2}dV_{g}-%
{\displaystyle\int\limits_{B_{R}\left(  O\right)  }}
v^{2}\left(  x\right)  V\left(  \rho\left(  x\right)  \right)  \varphi\left(
\rho\left(  x\right)  \right)  \varphi^{\prime}\left(  \rho\left(  x\right)
\right)  \frac{J^{\prime}\left(  u,\rho\left(  x\right)  \right)  }{J\left(
u,\rho\left(  x\right)  \right)  }dV_{g}\\
&  =%
{\displaystyle\int\limits_{B_{R}\left(  O\right)  }}
W\left(  \rho\left(  x\right)  \right)  \left\vert f\right\vert ^{2}dV_{g}-%
{\displaystyle\int\limits_{B_{R}\left(  O\right)  }}
V\left(  \rho\left(  x\right)  \right)  \left\vert f\right\vert ^{2}%
\frac{\varphi^{\prime}\left(  \rho\left(  x\right)  \right)  }{\varphi\left(
\rho\left(  x\right)  \right)  }\frac{J^{\prime}\left(  u,\rho\left(
x\right)  \right)  }{J\left(  u,\rho\left(  x\right)  \right)  }dV_{g}.
\end{align*}
Now, denote $F\left(  y\right)  =f\left(  \exp_{O}\left(  y\right)  \right)
$, $\Phi\left(  y\right)  =\varphi\left(  \exp_{O}\left(  y\right)  \right)  $
and $\Psi\left(  y\right)  =v\left(  \exp_{O}\left(  y\right)  \right)  $.
Then using the polar coordinate we get%
\begin{align*}
&
{\displaystyle\int\limits_{B_{R}\left(  O\right)  }}
W\left(  \rho\left(  x\right)  \right)  \left\vert f\right\vert ^{2}dV_{g}\\
&  =%
{\displaystyle\int\limits_{\mathbb{S}^{N-1}}}
{\displaystyle\int\limits_{0}^{R}}
\rho^{N-1}W\left(  \rho\right)  \Phi\left(  \rho\right)  \Phi\left(
\rho\right)  \Psi^{2}\left(  \rho u\right)  J\left(  u,\rho\right)  d\rho du\\
&  =-%
{\displaystyle\int\limits_{\mathbb{S}^{N-1}}}
{\displaystyle\int\limits_{0}^{R}}
\partial_{\rho}\left(  \rho^{N-1}V\left(  \rho\right)  \partial_{\rho}%
\Phi\left(  \rho\right)  \right)  \Phi\left(  \rho\right)  \Psi^{2}\left(
\rho u\right)  J\left(  u,\rho\right)  d\rho du\\
&  =%
{\displaystyle\int\limits_{\mathbb{S}^{N-1}}}
{\displaystyle\int\limits_{0}^{R}}
\rho^{N-1}V\left(  \rho\right)  \partial_{\rho}\Phi\left(  \rho\right)
\partial_{\rho}\left[  \Phi\left(  \rho\right)  \Psi^{2}\left(  \rho u\right)
J\left(  u,\rho\right)  \right]  d\rho du\\
&  =%
{\displaystyle\int\limits_{\mathbb{S}^{N-1}}}
{\displaystyle\int\limits_{0}^{R}}
\rho^{N-1}V\left(  \rho\right)  \left(  \partial_{\rho}\Phi\left(
\rho\right)  \right)  ^{2}\Psi^{2}\left(  \rho u\right)  J\left(
u,\rho\right)  d\rho du\\
&  +2%
{\displaystyle\int\limits_{\mathbb{S}^{N-1}}}
{\displaystyle\int\limits_{0}^{R}}
\rho^{N-1}V\left(  \rho\right)  \partial_{\rho}\Phi\left(  \rho\right)
\Phi\left(  \rho\right)  \partial_{\rho}\Psi\left(  \rho u\right)  \Psi\left(
\rho u\right)  J\left(  u,\rho\right)  d\rho du\\
&  +%
{\displaystyle\int\limits_{\mathbb{S}^{N-1}}}
{\displaystyle\int\limits_{0}^{R}}
\rho^{N-1}V\left(  \rho\right)  \partial_{\rho}\Phi\left(  \rho\right)
\Phi\left(  \rho\right)  \Psi^{2}\left(  \rho u\right)  \partial_{\rho
}J\left(  u,\rho\right)  d\rho du.
\end{align*}

Hence, we have%
\begin{align*}
&
{\displaystyle\int\limits_{B_{R}\left(  O\right)  }}
W\left(  \rho\left(  x\right)  \right)  \left\vert f\right\vert ^{2}dV_{g}\\
&  =%
{\displaystyle\int\limits_{\mathbb{S}^{N-1}}}
{\displaystyle\int\limits_{0}^{R}}
\rho^{N-1}V\left(  \rho\right)  \left\vert \Psi\left(  \rho u\right)
\partial_{\rho}\Phi\left(  \rho\right)  +\Phi\left(  \rho\right)
\partial_{\rho}\Psi\left(  \rho u\right)  \right\vert ^{2}J\left(
u,\rho\right)  d\rho du\\
&  -%
{\displaystyle\int\limits_{\mathbb{S}^{N-1}}}
{\displaystyle\int\limits_{0}^{R}}
\rho^{N-1}V\left(  \rho\right)  \left\vert \Phi\left(  \rho\right)
\partial_{\rho}\Psi\left(  \rho u\right)  \right\vert ^{2}J\left(
u,\rho\right)  d\rho du\\
&  +%
{\displaystyle\int\limits_{\mathbb{S}^{N-1}}}
{\displaystyle\int\limits_{0}^{R}}
\rho^{N-1}V\left(  \rho\right)  \partial_{\rho}\Phi\left(  \rho\right)
\Phi\left(  \rho\right)  \Psi^{2}\left(  \rho u\right)  \partial_{\rho
}J\left(  u,\rho\right)  d\rho du\\
&  =%
{\displaystyle\int\limits_{B_{R}\left(  O\right)  }}
V\left(  \rho\left(  x\right)  \right)  \left\vert \partial_{\rho}f\right\vert
^{2}dV_{g}-%
{\displaystyle\int\limits_{B_{R}\left(  O\right)  }}
V\left(  \rho\left(  x\right)  \right)  \left\vert \partial_{\rho}\left(
\frac{f}{\varphi\left(  \rho\left(  x\right)  \right)  }\right)  \right\vert
^{2}\varphi^{2}\left(  \rho\left(  x\right)  \right)  dV_{g}\\
&  +%
{\displaystyle\int\limits_{B_{R}\left(  O\right)  }}
\left(  \frac{f}{\varphi\left(  \rho\left(  x\right)  \right)  }\right)
^{2}\varphi\left(  \rho\left(  x\right)  \right)  \varphi^{\prime}\left(
\rho\left(  x\right)  \right)  \frac{J^{\prime}\left(  u,\rho\left(  x\right)
\right)  }{J\left(  u,\rho\left(  x\right)  \right)  }dV_{g}.
\end{align*}

\end{proof}

\begin{proof}
[Proof of Theorem \ref{T2}]Let $f\left(  x\right)  -f\left(  \exp_{O}\left(
Ru\right)  \right)  =\varphi\left(  \rho\left(  x\right)  \right)  v\left(
x\right)  $. Then proceed as in the proof of Theorem \ref{T1}, we get
\begin{align*}
&
{\displaystyle\int\limits_{B_{R}\left(  O\right)  }}
V\left(  \rho\left(  x\right)  \right)  \left\vert \nabla_{g}\left(
f-f\left(  \exp_{O}\left(  Ru\right)  \right)  \right)  \right\vert _{g}%
^{2}dV_{g}\\
&  =%
{\displaystyle\int\limits_{B_{R}\left(  O\right)  }}
V\left(  \rho\left(  x\right)  \right)  \left\vert \nabla_{g}\left(
\varphi\left(  \rho\left(  x\right)  \right)  v\left(  x\right)  \right)
\right\vert _{g}^{2}dV_{g}\\
&  =%
{\displaystyle\int\limits_{B_{R}\left(  O\right)  }}
V\left(  \rho\left(  x\right)  \right)  \left\vert \varphi^{2}\left(
\rho\left(  x\right)  \right)  \right\vert \left\vert \nabla_{g}v\right\vert
_{g}^{2}dV_{g}+%
{\displaystyle\int\limits_{B_{R}\left(  O\right)  }}
V\left(  \rho\left(  x\right)  \right)  \left\vert v^{2}\right\vert \left\vert
\nabla_{g}\varphi\left(  \rho\left(  x\right)  \right)  \right\vert _{g}%
^{2}dV_{g}\\
&  +%
{\displaystyle\int\limits_{B_{R}\left(  O\right)  }}
V\left(  \rho\left(  x\right)  \right)  \varphi\left(  \rho\left(  x\right)
\right)  \left\langle \nabla_{g}v^{2},\nabla_{g}\varphi\left(  \rho\left(
x\right)  \right)  \right\rangle _{g}dV_{g}\\
&  =%
{\displaystyle\int\limits_{B_{R}\left(  O\right)  }}
V\left(  \rho\left(  x\right)  \right)  \left\vert \varphi^{2}\left(
\rho\left(  x\right)  \right)  \right\vert \left\vert \nabla_{g}v\right\vert
_{g}^{2}dV_{g}+%
{\displaystyle\int\limits_{B_{R}\left(  O\right)  }}
V\left(  \rho\left(  x\right)  \right)  \left\vert v^{2}\right\vert \left\vert
\varphi^{\prime}\left(  \rho\left(  x\right)  \right)  \right\vert ^{2}%
dV_{g}\\
&  +%
{\displaystyle\int\limits_{B_{R}\left(  O\right)  }}
V\left(  \rho\left(  x\right)  \right)  \varphi\left(  \rho\left(  x\right)
\right)  \varphi^{\prime}\left(  \rho\left(  x\right)  \right)  \left\langle
\nabla_{g}v^{2},\nabla_{g}\rho\left(  x\right)  \right\rangle _{g}dV_{g}.
\end{align*}
Now, using the divergence theorem, we get%
\begin{align*}
&
{\displaystyle\int\limits_{B_{R}\left(  O\right)  }}
V\left(  \rho\left(  x\right)  \right)  \varphi\left(  \rho\left(  x\right)
\right)  \varphi^{\prime}\left(  \rho\left(  x\right)  \right)  \left\langle
\nabla_{g}v^{2},\nabla_{g}\rho\left(  x\right)  \right\rangle _{g}dV_{g}\\
&  =-%
{\displaystyle\int\limits_{B_{R}\left(  O\right)  }}
v^{2}\operatorname{div}\left(  V\left(  \rho\left(  x\right)  \right)
\varphi\left(  \rho\left(  x\right)  \right)  \varphi^{\prime}\left(
\rho\left(  x\right)  \right)  \nabla_{g}\rho\left(  x\right)  \right)
dV_{g}\\
&  -%
{\displaystyle\int\limits_{\partial B_{R}\left(  O\right)  }}
v^{2}V\left(  \rho\left(  x\right)  \right)  \varphi\left(  \rho\left(
x\right)  \right)  \varphi^{\prime}\left(  \rho\left(  x\right)  \right)
\frac{\partial\rho}{\partial\nu}\left(  x\right)  dS_{g}.
\end{align*}
By the assumption (C) on $f$, we get%
\[%
{\displaystyle\int\limits_{\partial B_{R}\left(  O\right)  }}
v^{2}V\left(  \rho\left(  x\right)  \right)  \varphi\left(  \rho\left(
x\right)  \right)  \varphi^{\prime}\left(  \rho\left(  x\right)  \right)
\frac{\partial\rho}{\partial\nu}\left(  x\right)  dS_{g}=0.
\]
Hence%
\begin{align*}
&
{\displaystyle\int\limits_{B_{R}\left(  O\right)  }}
V\left(  \rho\left(  x\right)  \right)  \varphi\left(  \rho\left(  x\right)
\right)  \varphi^{\prime}\left(  \rho\left(  x\right)  \right)  \left\langle
\nabla_{g}v^{2},\nabla_{g}\rho\left(  x\right)  \right\rangle _{g}dV_{g}\\
&  =-%
{\displaystyle\int\limits_{B_{R}\left(  O\right)  }}
v^{2}V\left(  \rho\left(  x\right)  \right)  \varphi\left(  \rho\left(
x\right)  \right)  \varphi^{\prime}\left(  \rho\left(  x\right)  \right)
\Delta_{g}\rho\left(  x\right)  dV_{g}\\
&  -%
{\displaystyle\int\limits_{B_{R}\left(  O\right)  }}
v^{2}V^{\prime}\left(  \rho\left(  x\right)  \right)  \varphi\left(
\rho\left(  x\right)  \right)  \varphi^{\prime}\left(  \rho\left(  x\right)
\right)  dV_{g}\\
&  -%
{\displaystyle\int\limits_{B_{R}\left(  O\right)  }}
v^{2}V\left(  \rho\left(  x\right)  \right)  \varphi^{\prime}\left(
\rho\left(  x\right)  \right)  \varphi^{\prime}\left(  \rho\left(  x\right)
\right)  dV_{g}\\
&  -%
{\displaystyle\int\limits_{B_{R}\left(  O\right)  }}
v^{2}V\left(  \rho\left(  x\right)  \right)  \varphi\left(  \rho\left(
x\right)  \right)  \varphi^{\prime\prime}\left(  \rho\left(  x\right)
\right)  dV_{g}.
\end{align*}
Again, using%
\[
\Delta_{g}\rho\left(  x\right)  =\frac{N-1}{\rho\left(  x\right)  }%
+\frac{J^{\prime}\left(  u,\rho\right)  }{J\left(  u,\rho\right)  }%
\]
we get%
\begin{align*}
&
{\displaystyle\int\limits_{B_{R}\left(  O\right)  }}
V\left(  \rho\left(  x\right)  \right)  \left\vert \nabla_{g}\left(
f-f\left(  \exp_{O}\left(  Ru\right)  \right)  \right)  \right\vert _{g}%
^{2}dV_{g}-%
{\displaystyle\int\limits_{B_{R}\left(  O\right)  }}
V\left(  \rho\left(  x\right)  \right)  \left\vert \varphi^{2}\left(
\rho\left(  x\right)  \right)  \right\vert \left\vert \nabla_{g}v\right\vert
_{g}^{2}dV_{g}\\
&  =-%
{\displaystyle\int\limits_{B_{R}\left(  O\right)  }}
\varphi\left(  \rho\left(  x\right)  \right)  v^{2}\left[
\begin{array}
[c]{c}%
V\left(  \rho\left(  x\right)  \right)  \varphi^{\prime}\left(  \rho\left(
x\right)  \right)  \frac{N-1}{\rho\left(  x\right)  }\\
+V^{\prime}\left(  \rho\left(  x\right)  \right)  \varphi^{\prime}\left(
\rho\left(  x\right)  \right)  +V\left(  \rho\left(  x\right)  \right)
\varphi^{\prime\prime}\left(  \rho\left(  x\right)  \right)
\end{array}
\right]  dV_{g}\\
&  -%
{\displaystyle\int\limits_{B_{R}\left(  O\right)  }}
v^{2}V\left(  \rho\left(  x\right)  \right)  \varphi\left(  \rho\left(
x\right)  \right)  \varphi^{\prime}\left(  \rho\left(  x\right)  \right)
\frac{J^{\prime}\left(  u,\rho\right)  }{J\left(  u,\rho\right)  }dV_{g}\\
&  =%
{\displaystyle\int\limits_{B_{R}\left(  O\right)  }}
\varphi^{2}\left(  \rho\left(  x\right)  \right)  v^{2}W\left(  \rho\left(
x\right)  \right)  -%
{\displaystyle\int\limits_{B_{R}\left(  O\right)  }}
v^{2}V\left(  \rho\left(  x\right)  \right)  \varphi\left(  \rho\left(
x\right)  \right)  \varphi^{\prime}\left(  \rho\left(  x\right)  \right)
\frac{J^{\prime}\left(  u,\rho\right)  }{J\left(  u,\rho\right)  }dV_{g}\\
&  =%
{\displaystyle\int\limits_{B_{R}\left(  O\right)  }}
W\left(  \rho\left(  x\right)  \right)  \left\vert f-f\left(  \exp_{O}\left(
Ru\right)  \right)  \right\vert ^{2}dV_{g}\\
&  -%
{\displaystyle\int\limits_{B_{R}\left(  O\right)  }}
v^{2}V\left(  \rho\left(  x\right)  \right)  \varphi\left(  \rho\left(
x\right)  \right)  \varphi^{\prime}\left(  \rho\left(  x\right)  \right)
\frac{J^{\prime}\left(  u,\rho\right)  }{J\left(  u,\rho\right)  }dV_{g}.
\end{align*}
Similarly,
\begin{align*}
&
{\displaystyle\int\limits_{\mathbb{M}\setminus B_{R}\left(  O\right)  }}
V\left(  \rho\left(  x\right)  \right)  \left\vert \nabla_{g}\left(
f-f\left(  \exp_{O}\left(  Ru\right)  \right)  \right)  \right\vert _{g}%
^{2}dV_{g}\\
&  -%
{\displaystyle\int\limits_{\mathbb{M}\setminus B_{R}\left(  O\right)  }}
V\left(  \rho\left(  x\right)  \right)  \left\vert \varphi^{2}\left(
\rho\left(  x\right)  \right)  \right\vert \left\vert \nabla_{g}\left(
\frac{f-f\left(  \exp_{O}\left(  Ru\right)  \right)  }{\varphi\left(
\rho\left(  x\right)  \right)  }\right)  \right\vert _{g}^{2}dV_{g}\\
&  =%
{\displaystyle\int\limits_{\mathbb{M}\setminus B_{R}\left(  O\right)  }}
W\left(  \rho\left(  x\right)  \right)  \left\vert f-f\left(  \exp_{O}\left(
Ru\right)  \right)  \right\vert ^{2}dV_{g}\\
&  -%
{\displaystyle\int\limits_{\mathbb{M}\setminus B_{R}\left(  O\right)  }}
v^{2}V\left(  \rho\left(  x\right)  \right)  \varphi\left(  \rho\left(
x\right)  \right)  \varphi^{\prime}\left(  \rho\left(  x\right)  \right)
\frac{J^{\prime}\left(  u,\rho\right)  }{J\left(  u,\rho\right)  }dV_{g}.
\end{align*}
Therefore%
\begin{align*}
&
{\displaystyle\int\limits_{\mathbb{M}}}
V\left(  \rho\left(  x\right)  \right)  \left\vert \nabla_{g}\left(
f-f\left(  \exp_{O}\left(  Ru\right)  \right)  \right)  \right\vert _{g}%
^{2}dx-%
{\displaystyle\int\limits_{\mathbb{M}}}
W\left(  \rho\left(  x\right)  \right)  \left\vert f-f\left(  \exp_{O}\left(
Ru\right)  \right)  \right\vert ^{2}dV_{g}\\
&  =%
{\displaystyle\int\limits_{\mathbb{M}}}
V\left(  \rho\left(  x\right)  \right)  \varphi^{2}\left(  \rho\left(
x\right)  \right)  \left\vert \nabla_{g}\left(  \frac{f-f\left(  \exp
_{O}\left(  Ru\right)  \right)  }{\varphi\left(  \rho\left(  x\right)
\right)  }\right)  \right\vert _{g}^{2}dx\\
&  -%
{\displaystyle\int\limits_{\mathbb{M}}}
V\left(  \rho\left(  x\right)  \right)  \left\vert f-f\left(  \exp_{O}\left(
Ru\right)  \right)  \right\vert ^{2}\frac{\varphi^{\prime}\left(  \rho\left(
x\right)  \right)  }{\varphi\left(  \rho\left(  x\right)  \right)  }%
\frac{J^{\prime}\left(  u,\rho\right)  }{J\left(  u,\rho\right)  }dV_{g}.
\end{align*}
Similarly,%
\begin{align*}
&
{\displaystyle\int\limits_{\mathbb{M}}}
V\left(  \rho\left(  x\right)  \right)  \left\vert \partial_{\rho}\left(
f-f\left(  \exp_{O}\left(  Ru\right)  \right)  \right)  \right\vert _{g}%
^{2}dx-%
{\displaystyle\int\limits_{\mathbb{M}}}
W\left(  \rho\left(  x\right)  \right)  \left\vert f-f\left(  \exp_{O}\left(
Ru\right)  \right)  \right\vert ^{2}dV_{g}\\
&  =%
{\displaystyle\int\limits_{\mathbb{M}}}
V\left(  \rho\left(  x\right)  \right)  \varphi^{2}\left(  \rho\left(
x\right)  \right)  \left\vert \partial_{\rho}\left(  \frac{f-f\left(  \exp
_{O}\left(  Ru\right)  \right)  }{\varphi\left(  \rho\left(  x\right)
\right)  }\right)  \right\vert ^{2}dx\\
&  -%
{\displaystyle\int\limits_{\mathbb{M}}}
V\left(  \rho\left(  x\right)  \right)  \left\vert f-f\left(  \exp_{O}\left(
Ru\right)  \right)  \right\vert ^{2}\frac{\varphi^{\prime}\left(  \rho\left(
x\right)  \right)  }{\varphi\left(  \rho\left(  x\right)  \right)  }%
\frac{J^{\prime}\left(  u,\rho\right)  }{J\left(  u,\rho\right)  }dV_{g}.
\end{align*}

\end{proof}


\begin{thebibliography}{99}                                                                                               %


\bibitem {ACR}Adimurthi; Chaudhuri, N.; Ramaswamy, M. \textit{An improved
Hardy-Sobolev inequality and its application}. Proc. Amer. Math. Soc. 130
(2002), no. 2, 489--505.

\bibitem {AK13}Akutagawa, K.; Kumura, H. \textit{Geometric relative Hardy
inequalities and the discrete spectrum of Schr\"{o}dinger operators on
manifolds}. Calc. Var. Partial Differential Equations 48 (2013), 67--88.

\bibitem {BEL}Balinsky, A. A.; Evans, W. D.; Lewis, R. T. \textit{The analysis
and geometry of Hardy's inequality}. Universitext. Springer, Cham, 2015.
xv+263 pp.

\bibitem {BFT}Barbatis, G.; Filippas, S.; Tertikas, A. \textit{A unified
approach to improved }$L^{p}$\textit{ Hardy inequalities with best constants}.
Trans. Amer. Math. Soc. 356 (2004), no. 6, 2169--2196.

\bibitem {Beckner12}Beckner, W. \textit{Pitt's inequality and the fractional
Laplacian: sharp error estimates}. Forum Math. 24(2012), no. 1, 177--209

\bibitem {BDGG17}Berchio, E.; D'Ambrosio, L.; Ganguly, D.; Grillo, G.
\textit{Improved }$L^{p}$\textit{-Poincar\'{e} inequalities on the hyperbolic
space}. Nonlinear Anal. 157 (2017), 146--166.

\bibitem {BGG}Berchio, E.; Ganguly, D.; Grillo, G. \textit{Sharp
Poincar\'{e}-Hardy and Poincar\'{e}-Rellich inequalities on the hyperbolic
space}, J. Funct. Anal., 272 (2017) 1661--1703.

\bibitem {BGGP}Berchio, E.; Ganguly, D.; Grillo, G.; Pinchover, Y. \textit{An
optimal improvement for the Hardy inequality on the hyperbolic space and
related manifolds}. Proc. Roy. Soc. Edinburgh Sect. A 150 (2020), no. 4, 1699--1736.

\bibitem {BDE08}Bosi, R.; Dolbeault, J.; Esteban, M. J. \textit{Estimates for
the optimal constants in multipolar Hardy inequalities for Schr\"{o}dinger and
Dirac operators}. Commun. Pure Appl. Anal. 7 (2008), no. 3, 533--562.

\bibitem {BM97}Brezis, H.; Marcus, M. \textit{Hardy's inequalities revisited}.
Dedicated to Ennio De Giorgi. Ann. Scuola Norm. Sup. Pisa Cl. Sci. (4) 25
(1997), no. 1-2, 217--237 (1998).

\bibitem {BMS00}Brezis, H.; Marcus, M.; Shafrir, I. \textit{Extremal functions
for Hardy's inequality with weight}. J. Funct. Anal. 171 (2000), no. 1, 177--191.

\bibitem {BV}Brezis, H.; V\'{a}zquez, J. L. \textit{Blow-up solutions of some
nonlinear elliptic problems}. Rev. Mat. Univ. Complut. Madrid 10 (1997), no.
2, 443--469.

\bibitem {Car}Carron, G. \textit{In\'{e}galit\'{e}s de Hardy sur les
vari\'{e}t\'{e}s riemanniennes non-compactes}, J. Math. Pures Appl. (9) 76
(1997), no. 10, 883--891.

\bibitem {Caz16}Cazacu, C. \textit{New estimates for the Hardy constants of
multipolar Schr\"{o}dinger operators}. Commun. Contemp. Math. 18 (2016), no.
5, 1550093, 28 pp.

\bibitem {CZ13}Cazacu, C.; Zuazua, E. \textit{Improved multipolar Hardy
inequalities}. Studies in phase space analysis with applications to PDEs,
35--52, Progr. Nonlinear Differential Equations Appl., 84,
Birkh\"{a}user/Springer, New York, 2013.

\bibitem {CGMSO18}Chan, H.; Ghoussoub, N.; Mazumdar, S.; Shakerian, S.; de
Oliveira Faria, L. F. \textit{Mass and extremals associated with the
Hardy-Schr\"{o}dinger operator on hyperbolic space}. Adv. Nonlinear Stud. 18
(2018), no. 4, 671--689.

\bibitem {DD14}D'Ambrosio, L.; Dipierro, S. \textit{Hardy inequalities on
Riemannian manifolds and applications}, Ann. Inst. H. Poincar\'{e} Anal. Non
Lin\'{e}aire 31 (2014) 449--475.

\bibitem {D}Davies, E. B. \textit{A review of Hardy inequalities}. The Maz'ya
anniversary collection, Vol. 2 (Rostock, 1998), 55--67, Oper. Theory Adv.
Appl., 110, Birkh\"{a}user, Basel, 1999.

\bibitem {DFP14}Devyver, B.; Fraas, M.; Pinchover, Y. \textit{Optimal Hardy
weight for second-order elliptic operator: an answer to a problem of Agmon}.
J. Funct. Anal. 266 (2014), no. 7, 4422--4489.

\bibitem {DH02}Druet, O.; Hebey, E. \textit{The }$AB$\textit{ program in
geometric analysis: sharp Sobolev inequalities and related problems}. Mem.
Amer. Math. Soc. 160 (2002), no. 761, viii+98 pp.

\bibitem {FS}Frank, R.; Seiringer, R. \textit{Non-linear ground state
representations and sharp Hardy inequalities}. J. Funct. Anal. 255 (2008), no.
12, 3407--3430.

\bibitem {GHL}Gallot, S.; Hulin, D.; Lafontaine, J. \textit{Riemannian
Geometry}, 3rd edn. (Springer-Verlag, Berlin, 2004).

\bibitem {GGM}Gazzola, F.; Grunau, H.-C.; Mitidieri, E. \textit{Hardy
inequalities with optimal constants and remainder terms}. Trans. Amer. Math.
Soc. 356 (2004), no. 6, 2149--2168.

\bibitem {GL2017}Gesztesy, F.; Littlejohn, L. \textit{Factorizations and
Hardy-Rellich-Type Inequalities}. Non-linear partial differential equations,
mathematical physics, and stochastic analysis, 207--226, EMS Ser. Congr. Rep.,
Eur. Math. Soc., Z\"{u}rich, 2018.

\bibitem {GM}Ghoussoub, N.; Moradifam, A. \textit{Bessel pairs and optimal
Hardy and Hardy-Rellich inequalities}. Math. Ann. 349 (2011), no. 1, 1--57.

\bibitem {GM1}Ghoussoub, N.; Moradifam, A. \textit{Functional inequalities:
new perspectives and new applications}, Mathematical Surveys and Monographs,
vol. 187, American Mathematical Society, Providence, RI, 2013.

\bibitem {KO1}Kombe, I.; Ozaydin, M. \textit{Improved Hardy and Rellich
inequalities on Riemannian manifolds}, Trans. Amer.Math. Soc. 361 (2009), 6191--6203.

\bibitem {KO2}Kombe, I.; Ozaydin, M. \textit{Hardy-Poincar\'{e}, Rellich and
uncertainty principle inequalities on Riemannian manifolds}, Trans. Amer.
Math. Soc. 365 (2013), 5035--5050.

\bibitem {KS19}Krist\'{a}ly, A.; Szak\'{a}l, A. \textit{Interpolation between
Brezis-V\'{a}zquez and Poincar\'{e} inequalities on nonnegatively curved
spaces: sharpness and rigidities}. J. Differential Equations 266 (2019), no.
10, 6621--6646.

\bibitem {KMP2007}Kufner, A.; Maligranda, L.; Persson, L.-E. The Hardy
Inequality. About its History and Some Related Results,
\textit{Vydavatelsk\'{y} Servis, Pilsen, 2007.}

\bibitem {KP}Kufner, A.; Persson, L.-E. Weighted inequalities of Hardy type.
\textit{World Scientific Publishing Co., Inc., River Edge, NJ, 2003. xviii+357
pp.}

\bibitem {LLZ19}Lam, N.; Lu, G.; Zhang, L. \textit{Factorizations and Hardy's
type identities and inequalities on upper half spaces}. Calc. Var. Partial
Differential Equations 58 (2019), no. 6, Paper No. 183, 31 pp.

\bibitem {LLZ20}Lam, N.; Lu, G.; Zhang, L. \textit{Geometric Hardy's
inequalities with general distance functions}. J. Funct. Anal. 279 (2020), no.
8, 108673, 35 pp.

\bibitem {LuYang1}Lu, G.; Yang, Q. \textit{Paneitz operators on hyperbolic
spaces and high order Hardy-Sobolev-Maz'ya inequalities on half spaces. }
Amer. J. Math. 141 (2019), no. 6, 1777-1816.

\bibitem {LuYang2}Lu, G.; Yang, Q. \textit{Green's functions of Paneitz and
GJMS operators on hyperbolic spaces and sharp Hardy-Sobolev-Maz'ya
inequalities on half spaces}, arXiv:1903.10365.

\bibitem {Maz11}Maz'ya, V. \textit{Sobolev spaces with applications to
elliptic partial differential equations. Second, revised and augmented
edition}. Grundlehren der Mathematischen Wissenschaften [Fundamental
Principles of Mathematical Sciences], 342. Springer, Heidelberg, 2011.
xxviii+866 pp.

\bibitem {Muck72}Muckenhoupt, B. \textit{Hardy's inequality with weights}.
Studia Math. 44 (1972), 31--38.

\bibitem {NN19}Ng\^{o}, Q. A.; Nguyen, V. H. \textit{Sharp constant for
Poincar\'{e}-type inequalities in the hyperbolic space}. Acta Math. Vietnam.
44 (2019), no. 3, 781--795.

\bibitem {Nguyen19}Nguyen, V. H. \textit{New sharp Hardy and Rellich type
inequalities on Cartan--Hadamard manifolds and their improvements}. Proc. Roy.
Soc. Edinburgh Sect. A, in press. DOI: https://doi.org/10.1017/prm.2019.37

\bibitem {OK}Opic, B.; Kufner, A. Hardy-type inequalities. \textit{Pitman
Research Notes in Mathematics Series, 219. Longman Scientific \& Technical,
Harlow, 1990. xii+333 pp.}

\bibitem {ST}Sandeep, K.; Tintarev, C. \textit{A subset of
Caffarelli-Kohn-Nirenberg inequalities in the hyperbolic space }%
$\mathbb{H}^{N}$, Ann. Mat. Pura Appl. (4) 196 (2017), no. 6, 2005--2021.

\bibitem {VZ00}V\'{a}zquez, J. L.; Zuazua, E. \textit{The Hardy inequality and
the asymptotic behaviour of the heat equation with an inverse-square
potential}. J. Funct. Anal. 173 (2000), no. 1, 103--153.

\bibitem {YSK}Yang, Q.; Su, D.; Kong, Y. \textit{Hardy inequalities on
Riemannian manifolds with negative curvature}, Commun.Contemp. Math. 16
(2014), 1350043, 24 pp.
\end{thebibliography}
\end{document}